
\documentclass[12pt,reqno]{amsart}

\usepackage{tikz}
\usetikzlibrary{calc}

\usepackage{pgfplots}


\usepackage{amsmath,amssymb,ifthen}
\usepackage{fullpage}
\usepackage{graphicx,psfrag,subfigure}
\usepackage{color}

\usepackage{moreverb}

\def\A{\mathbb A}

\def\P{{\mathbb P}}
\def\R{{\mathbb R}}
\def\N{{\mathbb N}}
\def\V{{\mathbb V}}

\def\EE{{\mathcal E}}

\def\KK{{\mathcal K}}
\def\LL{{\mathcal L}}
\def\MM{{\mathcal M}}
\def\OO{{\mathcal O}}
\def\PP{{\mathcal P}}
\def\RR{{\mathcal R}}
\def\SS{{\mathcal S}}
\def\TT{{\mathcal T}}
\def\UU{{\mathcal U}}

\def\sz{{\mathbb J}}

\def\diam{{\rm diam}}

\newcommand{\norm}[3][]{#1\|#2#1\|_{#3}}

\def\set#1#2{\big\{#1\,:\,#2\big\}}

\def\eps{\varepsilon}
\def\dual#1#2{\langle#1\,,\,#2\rangle}

\def\mcup{\mbox{$\bigcup$}}

\newcounter{constantsnumber}
\def\namec#1#2{%
  \ifthenelse{\equal{#1}{rho:reliable}}{C_{\rm rel}}{%
  \ifthenelse{\equal{#1}{rho:efficient}}{C_{\rm eff}}{%
  \ifthenelse{\equal{#1}{stability}}{C_{\rm stab}}{%
  \ifthenelse{\equal{#1}{equivalence1}}{C_{\rm low}}{%
  \ifthenelse{\equal{#1}{equivalence2}}{C_{\rm high}}{%
  \ifthenelse{\equal{#1}{optimal}}{C_{\rm opt}}{%
  \ifthenelse{\equal{#1}{scottzhang}}{C_{\rm sz}}{%
  \ifthenelse{\equal{#1}{dirichlet0}}{C_{\rm dir}}{%
  \ifthenelse{\equal{#1}{dirichlet}}{C_{\rm osc}}{%
  \ifthenelse{\equal{#1}{equivalence}}{C_{\rm eq}}{%
  \ifthenelse{\equal{#1}{pythagoras}}{C_{\rm pyth}}{%
  \ifthenelse{\equal{#1}{dlr}}{C_{\rm dlr}}{%
  \ifthenelse{\equal{#1}{nvb}}{C_{\rm nvb}}{%
  \ifthenelse{\equal{#1}{reduction}}{C_{\rm red}}{%
  \ifthenelse{\equal{#1}{refined}}{C_{\rm ref}}{%
  \ifthenelse{\equal{#1}{estconv}}{C_{\rm est}}{%
  \ifthenelse{\equal{#1}{cea}}{C_{\mbox{\rm\scriptsize C\'ea}}}{%
  \ifthenelse{\equal{#2}{newcounter}}{\refstepcounter{constantsnumber}\label{const#1}}{}C_{\ref{const#1}}}%
}}}}}}}}}}}}}}}}}
\def\setc#1{\namec{#1}{newcounter}}
\def\c#1{\namec{#1}{reference}}

\newcounter{contractionnumber}
\def\nameq#1#2{%
  \ifthenelse{\equal{#1}{reduction}}{q_{\rm red}}{%
  \ifthenelse{\equal{#1}{estconv}}{q_{\rm est}}{%
  \ifthenelse{\equal{#1}{cea}}{q_{\mbox{\scriptsize C\'ea}}}{%
  \ifthenelse{\equal{#2}{newcounter}}{\refstepcounter{contractionnumber}\label{contraction#1}}{}q_{\ref{contraction#1}}}%
}}}

\def\q#1{\nameq{#1}{reference}}

\def\oscT#1{{\rm osc}_{\TT,#1}}

\def\oscD#1{{\rm osc}_{D,#1}}
\def\oscN#1{{\rm osc}_{N,#1}}

\newtheorem{theorem}{Theorem}
\newtheorem{proposition}[theorem]{Proposition}
\newtheorem{lemma}[theorem]{Lemma}
\newtheorem{corollary}[theorem]{Corollary}
\newtheorem{algorithm}[theorem]{Algorithm}

\newenvironment{remark}{\medskip\noindent\textbf{Remark.}\ \it}{\qed\smallskip}

\def\subsection#1
{
 \bigskip

 \refstepcounter{subsection}
 {\noindent\bf\arabic{section}.\arabic{subsection}.~#1.~}
}

\def\revision#1{{\color{black}{#1}}}

\def\refine{{\texttt{refine}}}
\def\T{\mathbb T}

\begin{document}

\title[AFEM with inhomogeneous Dirichlet data]%
{Each $H^{1/2}$-stable projection yields convergence and quasi-optimality of adaptive FEM with inhomogeneous Dirichlet data in $\R^d$}
\date{\today}

\author{M.~Aurada}
\author{M.~Feischl}
\author{J.~Kemetm\"uller}
\author{M.~Page}
\author{D.~Praetorius}
\address{Institute for Analysis and Scientific Computing,
       Vienna University of Technology,
       Wiedner Hauptstra\ss{}e 8-10,
       A-1040 Wien, Austria}
\email{\{Markus.Aurada\,,\,Michael.Feischl\,,\,Josef.Kemetmueller\,,\,Dirk.Praetorius\}@tuwien.ac.at}
\email{Marcus.Page@tuwien.ac.at\quad\rm(corresponding author)}

\keywords{adaptive finite element method, convergence analysis, quasi-optimality, inhomogeneous Dirichlet data}
\subjclass[2000]{65N30, 65N50}

\begin{abstract}
We consider the solution of second order elliptic PDEs in $\R^d$ with inhomogeneous Dirichlet data by means of an $h$-adaptive FEM with fixed polynomial order $p\in\N$. As model example serves the Poisson equation with mixed Dirichlet-Neumann boundary conditions, where the inhomogeneous Dirichlet data are discretized by use of 
\revision{an $H^{1/2}$-stable} projection, for instance, the $L^2$-projection for $p=1$
or the Scott-Zhang projection for general $p\ge1$. For error estimation, we use a residual error estimator which includes the Dirichlet data oscillations. We prove
\revision{that each $H^{1/2}$-stable projection yields}
 convergence of the adaptive algorithm even with quasi-optimal convergence rate. Numerical experiments \revision{with the $L^2$- and Scott-Zhang projection} 
conclude the work.
\end{abstract}


\maketitle

\section{Introduction}
\label{section:introduction}%

\noindent
Recently, there has been a major breakthrough in the thorough mathematical understanding of convergence and quasi-optimality of $h$-adaptive FEM for second-order elliptic PDEs. However, the focus of the numerical analysis usually lies on model problems with homogeneous Dirichlet conditions,
i.e.\ $\Delta u = f$ in $\Omega$ with $u=0$ on $\Gamma=\partial\Omega$,
see e.g.~\cite{bdd,ckns,doerfler,ks,mns,stevenson}. Instead, our model problem
\begin{align}\label{eq:strongform}
\begin{split}
 -\Delta u &= f \quad \text{in } \Omega,\\
 u &= g \quad \text{on } \Gamma_D,\\
 \partial_n u &= \phi \quad \text{on } \Gamma_N
\end{split}
\end{align}
considers \emph{inhomogeneous} mixed Dirichlet-Neumann boundary conditions.
Here, $\Omega$ is a bounded Lipschitz domain in $\R^d$ with polyhedral boundary $\Gamma = \partial\Omega$ which is split into two \revision{(possibly non-connected)} relatively open boundary parts, namely the Dirichlet boundary $\Gamma_D$ and the Neumann boundary $\Gamma_N$, i.e.\ $\Gamma_D \cap \Gamma_N = \emptyset$ and $\overline\Gamma_D \cup \overline\Gamma_N = \Gamma$. We stress that the surface measure of the Dirichlet boundary has to be positive $|\Gamma_D|>0$,
whereas $\Gamma_N$ is allowed to be empty.
The given data formally satisfy $f \in \widetilde H^{-1}(\Omega)$, $g \in H^{1/2}(\Gamma_D)$, and $\phi \in H^{-1/2}(\Gamma_N)$.
\revision{We refer to Section~\ref{section:spaces} below for the definition of these Sobolev spaces.}
As is usually required to derive (localized) a~posteriori error estimators, we assume additional regularity of the given data, namely $f \in L^2(\Omega)$, $g \in H^1(\Gamma_D)$, and $\phi \in L^2(\Gamma_N)$. \revision{Moreover, we assume that the boundary partition into $\Gamma_D$ and $\Gamma_N$ is resolved by the triangulations used.}

We stress that ---using results available in the literature--- it is easily possible to generalize the analysis from the Laplacian $L=-\Delta$ to general symmetric and uniformly elliptic differential
operators of second order. The reader is referred to the seminal work~\cite{ckns}
which treats the case of \revision{$\Gamma_D = \partial\Omega$ and} homogeneous Dirichlet data $g=0$ and
provides the \revision{analytical} tools to cover general $L$.
\revision{Therefore, we only focus on the novel techniques which are necessary
to deal with inhomogeneous Dirichlet data.}

\revision{Unlike the case $g=0$ which is well-studied in the literature, see e.g.~\cite{ao,verfuerth}, only little work has been done on a posteriori error estimation for~\eqref{eq:strongform} with $g\neq0$, cf.\ \cite{bcd,sv}.
Moreover, besides~\cite{fpp} no convergence result for AFEM with inhomogeneous Dirichlet data is found in the literature, yet.}

While the inclusion of inhomogeneous Neumann conditions $\phi$ into the
convergence analysis of \revision{e.g.~\cite{bdd,ckns,doerfler,ks,mns,stevenson} is straight forward},
incorporating inhomogeneous Dirichlet conditions $g$ is \revision{not obvious and technically much} more demanding for several reasons:
\revision{%
First, since discrete FE functions cannot satisfy general inhomogeneous Dirichlet conditions $g$, the FE scheme requires an additional discretization of $g\approx g_\ell$. Second, the error $\norm{g-g_\ell}{H^{1/2}(\Gamma_D)}$ of this data approximation has to be controlled with respect to the non-local $H^{1/2}$-norm and has to be included in the a~posteriori error analysis and the adaptive algorithm. Third, in contrast to the case $g=0$, the discrete ansatz spaces $\V_\ell$ are non-nested, i.e.\
$\V_\ell\not\subseteq \V_{\ell+1}$. We therefore loose the orthogonality in energy norm which leads to certain technicalities to construct a contraction quantity which is equivalent to the Galerkin error resp.\ error estimator.
Therefore, quasi-optimality as well as even plain convergence of AFEM
with inhomogeneous Dirichlet data is not obvious at all.}%

In an earlier work~\cite{fpp}, we considered lowest-order finite elements $p=1$ in 2D and nodal interpolation to discretize $g$. However, this situation is very special in the sense that our entire analysis in~\cite{fpp} is strictly bound to the lowest-order case and cannot be generalized to $\R^d$, since nodal interpolation \revision{of the Dirichlet data} is well-defined if and only if $d=2$.

In this work, we consider finite elements of piecewise polynomial order $p\ge1$ and dimension $d\ge 2$. We show that each uniformly $H^{1/2}(\Gamma_D)$-stable projection $\P_\ell$ onto the discrete trace space will do the job:
In this frame, we may use techniques from adaptive boundary element methods~\cite{cms,fkmp,kop} to localize the non-local $H^{1/2}$-norm in terms of a locally weighted $H^1$-seminorm. To overcome the lack of Galerkin orthogonality, the remedy is to concentrate on a quasi-Pythagoras theorem and a stronger marking criterion. The latter implies (quasi-local) equivalence of error estimators for different discretizations of the Dirichlet data.
To obtain contraction of our AFEM, we may then consider (theoretically) the $H^{1/2}(\Gamma_D)$-orthogonal projection. To obtain optimality of the marking strategy, we may consider the Scott-Zhang projection instead. Both auxiliary problems are somehow sufficiently close to the original problem with projection $\P_\ell$, which is enforced by the marking strategy.

Overall, we prove that each uniformly $H^{1/2}$-stable projection $\P_\ell$ will lead to a convergent AFEM algorithm. Under the usual restrictions on the adaptivity parameters, we even show optimal algebraic convergence behaviour with respect to the number of elements. 
\section{Adaptive Algorithm}
\label{section:algorithm}%

\noindent
It is well-known that the Poisson problem~\eqref{eq:strongform} admits a unique weak solution $u\in H^1(\Omega)$ with $u = g$ on $\Gamma_D$ in the sense of traces which solves the variational formulation
\begin{align}\label{eq:weakform}
 \dual{\nabla u}{\nabla v}_\Omega
 &= \dual{f}{v}_\Omega + \dual{\phi}{v}_{\Gamma_N}
 \quad \text{for all } v \in H^1_D(\Omega).
\end{align}
Here, the test space reads $H^1_D(\Omega) = \set{v\in H^1(\Omega)}{v = 0 \text{ on } \Gamma_D \text{ in the sense of traces}}$, and $\dual\cdot\cdot$ denotes the respective $L^2$-scalar products. The proof relies essentially on a reformulation of~\eqref{eq:strongform} as a problem with homogeneous Dirichlet data via a so-called lifting operator $\LL$, i.e.\ $\LL:H^{1/2}(\Gamma)\to H^1(\Omega)$ is a linear and continuous operator with $(\LL \widehat g)|_\Gamma = \widehat g$ for all $\widehat g\in H^{1/2}(\Gamma)$ in the sense of traces.
\revision{Again, we refer to Section~\ref{section:spaces} for the definition
of the trace space $H^{1/2}(\Gamma)$.}
However, although $\LL$ is constructed analytically, it is hardly accessible numerically in general and thus this approach is not feasible in practice.

This section provides an overview on this work and its main results.
We analyze a common adaptive mesh-refining algorithm of the type
\begin{align*}
\boxed{\texttt{ solve }}
\quad\longrightarrow\quad
\boxed{\texttt{ estimate }}
\quad\longrightarrow\quad
\boxed{\texttt{ mark }}
\quad\longrightarrow\quad
\boxed{\texttt{ refine }}
\end{align*}
which is stated in detail below in Section~\ref{section:algorithm2}. We start with a discussion of its four modules.

\subsection{The module \pmb{\texttt{solve}}}
Let $\TT_\ell$ be a regular triangulation of $\Omega$ into simplices, i.e.\ tetrahedra for 3D resp.\ triangles for 2D, which is generated from an initial triangulation $\TT_0$. Let $\EE_\ell$ be the set of facets, i.e.\ faces for 3D and edges for 2D, respectively. This set is split into interior facets $\EE_\ell^\Omega = \set{E\in\EE_\ell}{E\cap\Omega\neq\emptyset}$, i.e.\ each $E\in\EE_\ell^\Omega$ satisfies $E=T_+\cap T_-$ for $T_\pm\in\TT_\ell$, as well as boundary facets $\EE_\ell^\Gamma = \EE_\ell\backslash\EE_\ell^\Omega$. We assume that the partition of $\Gamma$ into Dirichlet boundary $\Gamma_D$ and Neumann boundary $\Gamma_N$ is 
\revision{already resolved by the initial mesh $\TT_0$},
i.e.\ $\EE_\ell^\Gamma$ is split into $\EE_\ell^D=\set{E\in\EE_\ell}{E\subseteq\overline\Gamma_D}$ and
$\EE_\ell^N=\set{E\in\EE_\ell}{E\subseteq\overline\Gamma_N}$
\revision{for all $\ell\ge0$.}
Note that
$\EE_\ell^D$ (resp.\ $\EE_\ell^N$) \revision{therefore} 
provides a regular triangulation of the
\revision{boundary} $\Gamma_D$ (resp.\ $\Gamma_N$).

We use conforming elements of fixed polynomial order $p\in\N$, where the ansatz space reads
\begin{align}\label{eq:fespace:omega}
 \SS^p(\TT_\ell)
 = \set{V_\ell\in C(\overline\Omega)}{V_\ell|_T \text{ is a polynomial of degree at most $p$ on }T\in\TT_\ell}.
\end{align}
Since a discrete function $U_\ell\in\SS^p(\TT_\ell)$ cannot satisfy general
continuous Dirichlet conditions, we have to discretize the given data
$g \in H^1(\Gamma_D)$. To this purpose, let $\P_\ell:H^{1/2}(\Gamma_D) \to \SS^p(\EE_\ell^D)$ be a projection onto the discrete trace space
\begin{align}\label{eq:fespace:gamma}
 \SS^p(\EE_\ell^D)
 = \set{V_\ell|_{\Gamma_D}}{V_\ell\in\SS^p(\TT_\ell)}
\end{align}
As in the continuous case, it is well-known that there is
a unique $U_\ell \in \SS^p(\TT_\ell)$ with $U_\ell = \P_\ell g$ on $\Gamma_D$
which solves the Galerkin formulation
\begin{align}\label{eq:galerkin}
 \dual{\nabla U_\ell}{\nabla V_\ell}_\Omega
 &= \dual{f}{V_\ell}_\Omega + \dual{\phi}{V_\ell}_{\Gamma_N}
 \quad\text{for all }V_\ell \in \SS^p_D(\TT_\ell).
\end{align}
Here, the test space is given by $\SS^p_D(\TT_\ell) = \SS^p(\TT_\ell) \cap H^1_D(\Omega) = \set{V_\ell \in \SS^p(\TT_\ell)}{V_\ell = 0 \text{ on } \Gamma_D}$. We assume that \texttt{solve} computes the exact Galerkin solution of~\eqref{eq:galerkin}. Arguing as e.g.\ in~\cite{bm,stevenson}, it is, however, possible to include an approximate solver into our analysis.

Possible choices for $\P_\ell$ include the $L^2$-orthogonal projection for the lowest-order case $p=1$, which is considered in~\cite{bcd}, or the Scott-Zhang projection from~\cite{sz} which is proposed in~\cite{sv}. 

\subsection{The module \pmb{\texttt{estimate}}}
We start with the element data oscillations
\begin{align}
 \oscT{\ell}^2:=\sum_{T\in\TT_\ell}\oscT{\ell}(T)^2,
 \text{ where }
 \oscT{\ell}(T)^2:=|T|^{2/d}\,\norm{(1-\Pi_T)(f+\Delta U_\ell)}{L^2(T)}^2
\end{align}
and where $\Pi_T:L^2(T)\to\PP^{p-1}(T)$ denotes the $L^2$-orthogonal
projection. These arise in the efficiency estimate for residual error estimators. Moreover, the efficiency involves the Neumann data oscillations
\begin{align}\label{eq:osc:neu}
 \oscN{\ell}^2:=\sum_{E\in\EE_\ell^N}\oscN\ell(E)^2,
 \text{ where }
 \oscN{\ell}(E)^2:=|T|^{1/d}\,\norm{(1-\Pi_E)\phi}{L^2(E)}^2
\end{align}
with $T\in\TT_\ell$ being the unique element with $E\subseteq\partial T$
and where $\Pi_E:L^2(E)\to\PP^{p-1}(E)$ denotes the $L^2$-orthogonal projection on the boundary. Finally, the approximation of the Dirichlet data
$\P_\ell g\approx g\in H^1(\Gamma_D)$ is controlled by the Dirichlet data oscillations
\begin{align}\label{eq:osc:dir}
 \oscD{\ell}^2:=\sum_{E\in\EE_\ell^D} \oscD{\ell}(E)^2,
 \text{ where }
 \oscD{\ell}(E)^2:=|T|^{1/d}\,\norm{(1-\Pi_E)\nabla_\Gamma g}{L^2(E)}^2,
\end{align}
where again $T\in\TT_\ell$ denotes the unique element with $E\subseteq\partial T$.
Moreover, $\nabla_\Gamma(\cdot)$ denotes the surface gradient.
We recall that up to shape regularity we have equivalence $|T|^{1/d}\simeq \diam(T)$ as well as $\diam(T)\simeq\diam(E)$ for all $T\in\TT_\ell$ and $E\in\EE_\ell$ with $E\subseteq\partial T$.

We use a residual error estimator $\eta_\ell^2 = \varrho_\ell^2 + \oscD\ell^2$ which is split into general contributions and Dirichlet oscillations, i.e.\
\begin{align}\label{eq1:estimator2:T}
 \varrho_\ell^2
 = \sum_{T\in\TT_\ell}\varrho_\ell(T)^2
\end{align}
with corresponding refinement indicators
\begin{align}\label{eq2:estimator2:T}
 \varrho_\ell(T)^2
 &:= |T|^{2/d}\,\norm{f+\Delta U_\ell}{L^2(T)}^2
 + |T|^{1/d}\big(\norm{[\partial_nU_\ell]}{L^2(\partial T\cap\Omega)}^2
 + \norm{\phi-\partial_nU_\ell}{L^2(\partial T\cap\Gamma_N)}^2\big).
\end{align}
The module \texttt{estimate} returns the elementwise contributions $\varrho_\ell(T)^2$ and $\oscD\ell(E)^2$ for all $T\in\TT_\ell$ and $E\in\EE_\ell^D$.

\subsection{The module \pmb{\texttt{mark}}}
For element marking, we use a modification of the D\"orfler marking~\cite{doerfler} proposed firstly in Stevenson~\cite{stevenson}. In each step of the adaptive loop, we mark either elements or Dirichlet edges for refinement, where the latter is only done if $\oscD\ell$ is large when compared to $\varrho_\ell$. A precise statement of the module \texttt{mark} is part of Algorithm~\ref{algorithm2} below.

\subsection{The module \pmb{\texttt{refine}}}
Locally refined meshes are obtained by use of the newest vertex bisection algorithm, see e.g.~\cite{stevenson:nvb,traxler}, where $\TT_{\ell+1} = \refine(\TT_\ell,\MM_\ell)$ for a set $\MM_\ell\subseteq\TT_\ell$ of marked elements returns the coarsest regular triangulation $\TT_{\ell+1}$ such that all marked elements $T\in\MM_\ell$ have been refined by at least one bisection. Arguing as in~\cite{ks}, one may also use variants of newest vertex bisection, where each $T\in\MM_\ell$ is refined by at least $n$ bisections with arbitrary, but fixed $n\in\N$.

\subsection{Adaptive loop}
\label{section:algorithm2}%
With the aforegoing modules, the adaptive mesh-refining algorithm takes the following form.

\begin{algorithm}\label{algorithm2}%
Let adaptivity parameters $0<\theta_1,\theta_2,\vartheta<1$ and initial triangulation $\TT_0$ be given. For each $\ell=0,1,2,\dots$ do:
\begin{itemize}
\item[(i)] Compute discrete solution $U_\ell\in\SS^p(\TT_\ell)$.
\item[(ii)] Compute refinement indicators $\varrho_\ell(T)$ and $\oscD\ell(E)$ for all $T\in\TT_\ell$ and $E\in\EE_\ell^D$.
\item[(iii)] Provided that $\oscD\ell^2 \le \vartheta\,\varrho_\ell^2$, choose $\MM_\ell \subseteq \TT_\ell$ such that
\begin{align}\label{eq:doerfler:res}
 \theta_1\,\varrho_\ell^2 \le \sum_{T\in\MM_\ell}\varrho_\ell(T)^2.
\end{align}
\item[(iv)] Provided that $\oscD\ell^2 > \vartheta\,\varrho_\ell^2$, choose $\MM_\ell^D \subseteq \EE_\ell^D$ such that
\begin{align}\label{eq:doerfler:osc}
 \theta_2\,\oscD\ell^2 \le \sum_{E\in\MM_\ell^D}\oscD\ell(E)^2
\end{align}
and let $\MM_\ell := \set{T\in\TT_\ell}{\exists E\in\MM_\ell^D\quad E\subseteq\partial T}$.
\item[(v)] Use newest vertex bisection to generate $\TT_{\ell+1}=\refine(\TT_\ell,\MM_\ell)$.
\item[(vi)] Update counter $\ell\mapsto\ell+1$ and go to {\rm(i)}.
\end{itemize}
\end{algorithm}

\subsection{Function spaces}\label{section:spaces}
This section briefly collects the function spaces and norms used in the following. 
We refer e.g.\ to the monographs~\cite{hsiao-wendland,mclean,sauterschwab} for further details.

$L^2(\Omega)$ resp.\ $H^1(\Omega)$ denote the usual Lebesgue space and Sobolev
space on $\Omega$. The dual space of $H^1(\Omega)$ with respect to the extended
$L^2(\Omega)$-scalar product is denoted by $\widetilde H^{-1}(\Omega)$.

For measurable $\gamma\subseteq\Gamma$,  e.g.\ $\gamma\in\{\Gamma_D,\Gamma_N\}$,
the Sobolev space $H^1(\gamma)$ is defined as the completion of the Lipschitz
continuous functions on $\gamma$ with respect to the norm
$\norm{v}{H^1(\gamma)}^2 = \norm{v}{L^2(\gamma)}^2 
+ \norm{\nabla_\Gamma v}{L^2(\gamma)}^2$, where $\nabla_\Gamma(\cdot)$ denotes
the surface gradient for $d=3$ resp.\ the arclength derivative for $d=2$. With the
Lebesgue space $L^2(\gamma)$, Sobolev spaces of fractional order $0\le\alpha\le1$
are defined by interpolation $H^\alpha(\gamma) = [L^2(\gamma);H^1(\gamma)]_\alpha$.
Moreover, $\widetilde H^1(\gamma$) is defined as the completion of all Lipschitz
continuous functions on $\gamma$ which vanish on $\partial\gamma$, with
respect to the $H^1(\gamma)$-norm, and $\widetilde H^\alpha(\gamma) 
= [L^2(\gamma);\widetilde H^1(\gamma)]_\alpha$ is defined by interpolation.

Sobolev spaces of negative order are defined by duality 
$\widetilde H^{-\alpha}(\gamma) = H^\alpha(\gamma)^*$ and 
$H^{-\alpha}(\gamma) = \widetilde H^\alpha(\gamma)^*$, where duality is understood
with respect to the extended $L^2(\gamma)$-scalar product.

We note that $H^{1/2}(\Gamma)$ can equivalently be defined as the trace space of
$H^1(\Omega)$, i.e.
\begin{align}\label{eq:dp:norm1}
 H^{1/2}(\Gamma) 
 = \set{\widehat w|_\Gamma}{\widehat w\in H^1(\Omega)}.
\end{align}
For our analysis, we shall use the graph norm of the restriction operator
\begin{align}
 \norm{w}{H^{1/2}(\Gamma)}
 := \inf\set{\norm{\widehat w}{H^1(\Omega)}}{\widehat w\in H^1(\Omega)
\text{ with }\widehat w|_\Gamma = w}.
\end{align}
Moreover, the graph norm and the interpolation norm are, in fact, 
equivalent norms on $H^{1/2}(\Gamma)$, and the norm equivalence constants depend
only on $\Gamma$.
A similar observation holds for the space $H^{1/2}(\gamma)$, namely
\begin{align}
 H^{1/2}(\gamma) = \set{\widehat w|_\gamma}{\widehat w\in H^{1/2}(\Gamma)},
\end{align}
and the corresponding graph norm
\begin{align}\label{eq:dp:norm2}
 \norm{w}{H^{1/2}(\gamma)}
 := \inf\set{\norm{\widehat w}{H^{1/2}(\Gamma)}}{\widehat w\in H^{1/2}(\Gamma)
 \text{ with }\widehat w|_\gamma = w}
\end{align}
is an equivalent norm on $H^{1/2}(\gamma)$. Throughout our analysis and without
loss of generalization, we shall 
equip $H^{1/2}(\Gamma)$ resp.\ $H^{1/2}(\gamma)$ with these graph 
norms~\eqref{eq:dp:norm1}--\eqref{eq:dp:norm2}.

\subsection{Main results}
Throughout, we assume that the projections $\P_\ell:H^{1/2}(\Gamma_D)\to\SS^p(\EE_\ell^D)$ are uniformly $H^{1/2}(\Gamma_D)$-stable, i.e.\ the operator norm is uniformly bounded
\begin{align}\label{eq:stability}
 \norm{\P_\ell:H^{1/2}(\Gamma_D)\to H^{1/2}(\Gamma_D)}{}
 \le \c{stability} < \infty
\end{align}
with some $\ell$-independent constant $\setc{stability}>0$. This assumption is guaranteed for the $H^{1/2}(\Gamma_D)$-orthogonal projection with $\c{stability}=1$. Moreover, the $L^2(\Gamma_D)$-orthogonal projection for the lowest-order case $p=1$ and newest vertex bisection is uniformly bounded~\cite{kp}, and so is the Scott-Zhang projection~\cite{sz} onto $\SS^p(\EE_\ell^D)$ for arbitrary $p\ge1$.

First, our discretization is quasi-optimal in the sense of the C\'ea lemma.
Note that estimate~\eqref{eq:cea} does not depend on the precise choice of $\P_\ell$, and the minimum is taken over all discrete functions.
Unlike our
observation, the result in e.g.~\cite[Theorem~6.1]{bcd} takes the minimum with
respect to the affine space $\set{W_\ell\in\SS^p(\TT_\ell)}{W_\ell|_{\Gamma_D}=\P_\ell g}$ and for first-order $p=1$ only.

\begin{proposition}[C\'ea-type estimate in $H^1$-norm]
\label{prop:cea}%
The Galerkin solution satisfies
\begin{align}\label{eq:cea}
 \norm{u-U_\ell}{H^1(\Omega)}
 \le\c{cea}\min_{W_\ell\in\SS^p(\TT_\ell)}\norm{u-W_\ell}{H^1(\Omega)}.
\end{align}
The constant $\setc{cea}>0$ depends only on $\Omega$, $\Gamma_D$,
shape regularity of $\TT_\ell$, the polynomial degree $p\ge1$, and the constant $\c{stability}>0$.
\end{proposition}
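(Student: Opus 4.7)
The plan is to reduce the inhomogeneous problem to the usual Galerkin-orthogonality setting by constructing, for each $W_\ell\in\SS^p(\TT_\ell)$, a modified discrete function $\widetilde W_\ell\in\SS^p(\TT_\ell)$ that satisfies the discrete Dirichlet condition $\widetilde W_\ell|_{\Gamma_D}=\P_\ell g$ and is still quasi-optimal, i.e.\ $\norm{u-\widetilde W_\ell}{H^1(\Omega)}\lesssim\norm{u-W_\ell}{H^1(\Omega)}$. Once this is achieved, the difference $U_\ell-\widetilde W_\ell$ lies in $\SS^p_D(\TT_\ell)$ and is therefore an admissible test function in~\eqref{eq:galerkin}. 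A standard computation using Galerkin orthogonality,
\[
 \norm{\nabla(U_\ell-\widetilde W_\ell)}{L^2(\Omega)}^2
 = \dual{\nabla(u-\widetilde W_\ell)}{\nabla(U_\ell-\widetilde W_\ell)}_\Omega
 \le \norm{u-\widetilde W_\ell}{H^1(\Omega)}\,\norm{U_\ell-\widetilde W_\ell}{H^1(\Omega)},
\]
together with the Friedrichs/Poincar\'e inequality on $H^1_D(\Omega)$ (which is available since $|\Gamma_D|>0$) and the triangle inequality, yields~\eqref{eq:cea}.

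The crux of the proof is the construction of $\widetilde W_\ell$. I would write $\widetilde W_\ell = W_\ell + V_\ell$ for some $V_\ell\in\SS^p(\TT_\ell)$ whose trace on $\Gamma_D$ is prescribed by
\[
 V_\ell|_{\Gamma_D}
 = \P_\ell g - W_\ell|_{\Gamma_D}
 = \P_\ell\bigl(g - W_\ell|_{\Gamma_D}\bigr),
\]
where the second identity uses that $\P_\ell$ is a projection onto $\SS^p(\EE_\ell^D)$ and that $W_\ell|_{\Gamma_D}\in\SS^p(\EE_\ell^D)$. Applying first the uniform $H^{1/2}$-stability~\eqref{eq:stability} of $\P_\ell$ and then the trace theorem (together with $u|_{\Gamma_D}=g$), one obtains
\[
 \norm{V_\ell|_{\Gamma_D}}{H^{1/2}(\Gamma_D)}
 \le \c{stability}\,\norm{g-W_\ell|_{\Gamma_D}}{H^{1/2}(\Gamma_D)}
 \lesssim \norm{u-W_\ell}{H^1(\Omega)}.
\]

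The technical main obstacle is to exhibit a \emph{discrete} $H^{1/2}$-to-$H^1$ lifting $\LL_\ell:\SS^p(\EE_\ell^D)\to\SS^p(\TT_\ell)$ with $(\LL_\ell v)|_{\Gamma_D}=v$ and $\norm{\LL_\ell v}{H^1(\Omega)}\lesssim\norm{v}{H^{1/2}(\Gamma_D)}$, uniformly in $\ell$. I would construct this in three steps: extend $v\in\SS^p(\EE_\ell^D)$ from $\Gamma_D$ to the whole boundary $\Gamma$, using the norm equivalence~\eqref{eq:dp:norm2} that identifies $H^{1/2}(\Gamma_D)$ with the restriction of $H^{1/2}(\Gamma)$; lift the extension continuously to some $\widehat v\in H^1(\Omega)$ via the analytic lifting $\LL$; then apply a Scott--Zhang quasi-interpolation $J_\ell:H^1(\Omega)\to\SS^p(\TT_\ell)$ whose nodal functionals on $\Gamma_D$ are chosen on facets contained in $\Gamma_D$, so that $(J_\ell\widehat v)|_{\Gamma_D}$ only depends on $\widehat v|_{\Gamma_D}=v$ and in fact equals $v$ because $v\in\SS^p(\EE_\ell^D)$ is already discrete. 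Setting $\LL_\ell v := J_\ell\widehat v$, the $H^1$-stability of $J_\ell$ (depending on shape regularity and $p$) and of $\LL$ yields the desired uniform bound.

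Combining the three ingredients ($H^{1/2}$-stability of $\P_\ell$, the trace theorem, and the discrete lifting $\LL_\ell$), one arrives at $\norm{V_\ell}{H^1(\Omega)}\lesssim\norm{u-W_\ell}{H^1(\Omega)}$, hence $\norm{u-\widetilde W_\ell}{H^1(\Omega)}\lesssim\norm{u-W_\ell}{H^1(\Omega)}$ by another triangle inequality, which together with the Galerkin argument above finishes the proof. The dependence of the constant $\c{cea}$ on $\Omega$, $\Gamma_D$, shape regularity, $p$, and $\c{stability}$ is tracked through the Poincar\'e constant, the trace constant, the stability of the Scott--Zhang lifting $J_\ell$, and~\eqref{eq:stability}.
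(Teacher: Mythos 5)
Your proof is correct, and it is organized differently from the paper's, although both arguments ultimately rest on the same two ingredients: the composition of the Scott--Zhang projection with an analytic lifting as a uniformly $H^1$-bounded discrete extension of discrete Dirichlet data, and the $H^{1/2}$-stability~\eqref{eq:stability} of $\P_\ell$ combined with the trace bound $\norm{g-W_\ell|_{\Gamma_D}}{H^{1/2}(\Gamma_D)}\lesssim\norm{u-W_\ell}{H^1(\Omega)}$. The paper estimates $\norm{\nabla(u-U_\ell)}{L^2(\Omega)}$ directly from the Galerkin orthogonality by inserting the specific competitor $V_\ell+\sz_\ell\LL\sz_\ell\widehat g_\ell$ with $V_\ell=\sz_\ell u-\sz_\ell\LL\sz_\ell\widehat g$, bounds the resulting terms $\norm{\nabla(u-\sz_\ell u)}{L^2(\Omega)}$ and $\norm{(1-\P_\ell)g}{H^{1/2}(\Gamma_D)}$ by the unconstrained best approximation via the projection properties of $\sz_\ell$ and $\P_\ell$, and then passes from the gradient seminorm to the full $H^1$-norm through the Rellich-based equivalence $\norm{\cdot}{H^1(\Omega)}\simeq\norm{\nabla(\cdot)}{L^2(\Omega)}+\norm{(\cdot)|_{\Gamma_D}}{H^{1/2}(\Gamma_D)}$, since $u-U_\ell\notin H^1_D(\Omega)$. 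You instead reduce to the classical C\'ea argument in the affine space $\set{V_\ell\in\SS^p(\TT_\ell)}{V_\ell|_{\Gamma_D}=\P_\ell g}$ and show that the constrained best approximation is dominated by the unconstrained one, by correcting an arbitrary $W_\ell$ with the discrete lifting $\sz_\ell\LL$ applied to $\P_\ell(g-W_\ell|_{\Gamma_D})$; the return to the full $H^1$-norm then only needs the Friedrichs inequality for the discrete difference $U_\ell-\widetilde W_\ell\in\SS^p_D(\TT_\ell)$ plus a triangle inequality, which is slightly more elementary than the Rellich equivalence. What the paper's arrangement buys in exchange is the simultaneous estimate $\norm{(1-\P_\ell)g}{H^{1/2}(\Gamma_D)}\lesssim\min_{W_\ell\in\SS^p(\TT_\ell)}\norm{u-W_\ell}{H^1(\Omega)}$ as a byproduct, i.e.\ quasi-optimality of the Dirichlet data approximation itself, which your route does not produce directly. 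Your observation that your construction also explains why the minimum in~\eqref{eq:cea} may be taken over all of $\SS^p(\TT_\ell)$ rather than over the affine trace-constrained space (as in the cited result of~\cite{bcd}) is exactly the point of the proposition; the constant dependencies you track match the statement.
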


Second, the considered error estimator provides an upper bound and, up to data oscillations, also a lower bound for the Galerkin error.

\begin{proposition}[reliability and efficiency of $\eta_\ell$]
\label{prop:estimator}%
The error estimator $\eta_\ell^2 = \varrho_\ell^2+\oscD\ell^2$ is reliable
\begin{align}\label{eq:rho:reliable}
 \norm{u - U_\ell}{H^1(\Omega)}^2
 \le \c{rho:reliable}\, \eta_\ell^2
\end{align}
and efficient
\begin{align}\label{eq:rho:efficient}
 \c{rho:efficient}^{-1}\,\eta_\ell^2
 \le \norm{\nabla(u-U_\ell)}{L^2(\Omega)}^2 + \oscT{\ell}^2 +\oscN{\ell}^2 + \oscD{\ell}^2.
\end{align}
The constants $\setc{rho:reliable},\setc{rho:efficient}>0$ depend on $\Omega$ and $\Gamma_D$, on the polynomial degree $p\ge1$, stability $\c{stability}>0$, the initial triangulation $\TT_0$, and on the use of newest vertex bisection.
\end{proposition}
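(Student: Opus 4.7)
The two estimates in~\eqref{eq:rho:reliable} and~\eqref{eq:rho:efficient} admit essentially independent proofs: efficiency reduces to the classical Verf\"urth bubble-function analysis, while reliability requires a new $H^{1/2}$-localization ingredient to cope with the inhomogeneous Dirichlet data.

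For \textbf{reliability}, the strategy is to reduce to the homogeneous-Dirichlet case by lifting. Since $|\Gamma_D|>0$, a Friedrichs-type inequality together with the trace theorem yields
\begin{equation*}
 \norm{u-U_\ell}{H^1(\Omega)}^2
 \lesssim \norm{\nabla(u-U_\ell)}{L^2(\Omega)}^2 + \norm{g-\P_\ell g}{H^{1/2}(\Gamma_D)}^2.
\end{equation*}
Equivalently, one selects a continuous $H^{1/2}$-stable lifting $w_\ell\in H^1(\Omega)$ of $g-\P_\ell g$, so that $u-U_\ell-w_\ell\in H^1_D(\Omega)$. Testing the Galerkin orthogonality~\eqref{eq:galerkin} against a Scott-Zhang (or Cl\'ement-type) quasi-interpolant of $u-U_\ell-w_\ell$, integrating by parts element-wise, and invoking the standard local approximation and trace estimates for the quasi-interpolant, produces precisely the volume, jump, and Neumann contributions that constitute $\varrho_\ell$. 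This yields an intermediate bound of the shape $\norm{\nabla(u-U_\ell)}{L^2(\Omega)}^2 \lesssim \varrho_\ell^2 + \norm{g-\P_\ell g}{H^{1/2}(\Gamma_D)}^2$.

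The key novel step --- and the main obstacle --- is then to localize the remaining non-local term $\norm{g-\P_\ell g}{H^{1/2}(\Gamma_D)}$ into the elementwise quantity $\oscD{\ell}$. This is where the uniform $H^{1/2}$-stability~\eqref{eq:stability} of $\P_\ell$ enters crucially. Following the localization machinery developed for adaptive BEM in~\cite{cms,fkmp,kop}, one combines~\eqref{eq:stability} with a best-approximation argument against an auxiliary discrete interpolant possessing local $H^1$-approximation properties (e.g.\ a nodal or Scott-Zhang operator on $\EE_\ell^D$) to obtain a weighted local bound of the form
\begin{equation*}
 \norm{g-\P_\ell g}{H^{1/2}(\Gamma_D)}^2
 \lesssim \sum_{E\in\EE_\ell^D}|E|^{1/(d-1)}\,\norm{(1-\Pi_E)\nabla_\Gamma g}{L^2(E)}^2,
\end{equation*}
where the reduction from $\nabla_\Gamma(g-\P_\ell g)$ to $(1-\Pi_E)\nabla_\Gamma g$ uses that $\nabla_\Gamma\P_\ell g|_E$ is a polynomial of degree $\le p-1$ and hence annihilated by $(1-\Pi_E)$. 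Shape regularity of $\TT_\ell$ gives $|E|^{1/(d-1)}\simeq|T|^{1/d}$, so the right-hand side is exactly $\oscD{\ell}^2$, and~\eqref{eq:rho:reliable} follows.

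For \textbf{efficiency}, observe that $\oscD{\ell}^2$ appears trivially on both sides of~\eqref{eq:rho:efficient}, so it suffices to bound $\varrho_\ell^2$ by $\norm{\nabla(u-U_\ell)}{L^2(\Omega)}^2+\oscT{\ell}^2+\oscN{\ell}^2$. This is precisely the classical Verf\"urth bubble-function argument, carried out element-by-element and entirely independent of $\P_\ell$ and $g$: an interior bubble tests each volume residual $f+\Delta U_\ell$, with the split $f=\Pi_T f+(1-\Pi_T)f$ isolating $\oscT{\ell}$; facet bubbles extended into the two neighbouring elements test the jump residuals $[\partial_n U_\ell]$; and analogous facet bubbles on $\Gamma_N$ handle $\phi-\partial_n U_\ell$, with $\oscN{\ell}$ emerging from the split of $\phi$. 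Summing these local estimates yields~\eqref{eq:rho:efficient}.
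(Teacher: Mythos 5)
Your efficiency argument and the skeleton of your reliability argument coincide with the paper's: a lifting of $(1-\P_\ell)g$ reduces matters to a zero-trace error, the standard residual/quasi-interpolation argument yields $\norm{\nabla(u-U_\ell)}{L^2(\Omega)}\lesssim\varrho_\ell+\norm{(1-\P_\ell)g}{H^{1/2}(\Gamma_D)}$, and the Verf\"urth bubble-function estimates give~\eqref{eq:rho:efficient}. The genuine gap is in the one step you yourself single out as the key novelty, namely the bound $\norm{(1-\P_\ell)g}{H^{1/2}(\Gamma_D)}\lesssim\oscD\ell$. Your justification is that ``$\nabla_\Gamma\P_\ell g|_E$ is a polynomial of degree $\le p-1$ and hence annihilated by $(1-\Pi_E)$''. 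That observation only yields the \emph{easy} direction: for any discrete $W_\ell$ one has $\norm{(1-\Pi_E)\nabla_\Gamma g}{L^2(E)}=\norm{(1-\Pi_E)\nabla_\Gamma(g-W_\ell)}{L^2(E)}\le\norm{\nabla_\Gamma(g-W_\ell)}{L^2(E)}$, i.e.\ the oscillation is a \emph{lower} bound for the weighted $H^1$-seminorm error, not an upper bound. What is actually needed, after the $H^{1/2}$-localization $\norm{(1-\sz_\ell)g}{H^{1/2}(\Gamma_D)}\lesssim\min_{W_\ell}\norm{h_\ell^{1/2}\nabla_\Gamma(g-W_\ell)}{L^2(\Gamma_D)}$ from~\eqref{eq:scottzhang:kp} (the BEM machinery you cite covers this part), is the converse estimate
\begin{align*}
 \norm{\nabla_\Gamma(1-\sz_\ell)g}{L^2(E)}
 \lesssim \norm{(1-\Pi_\ell)\nabla_\Gamma g}{L^2(\omega_{\ell,E}^\Gamma)},
\end{align*}
i.e.\ that the error of a \emph{continuous} piecewise-polynomial approximant in the $H^1$-seminorm is controlled by the elementwise best approximation of $\nabla_\Gamma g$ by \emph{discontinuous} piecewise polynomials. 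This is not formal: the elementwise $L^2$-best approximation $\Pi_E\nabla_\Gamma g$ is in general not the surface gradient of any continuous piecewise polynomial, so no choice of $W_\ell$ turns your annihilation identity into the required bound (only for $d=2$, $p=1$ does nodal interpolation do the trick).

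In the paper this missing ingredient is exactly Proposition~\ref{lemma:oscD}: the local equivalence~\eqref{eq:dirichlet2} between $\norm{\nabla_\Gamma(1-\sz_\ell)g}{L^2(E)}$ and the local oscillations, proved by a compactness/contradiction argument on a fixed patch combined with a scaling argument and the fact that newest vertex bisection produces only finitely many patch shapes; Corollary~\ref{cor:oscD} then combines it with the projection property and uniform $H^{1/2}$-stability of $\P_\ell$ and with~\eqref{eq:scottzhang:kp} to obtain $\norm{(1-\P_\ell)g}{H^{1/2}(\Gamma_D)}\lesssim\oscD\ell$. Until you supply a proof of this upper bound (or an equivalent ``continuous vs.\ discontinuous approximation'' result), your reliability estimate~\eqref{eq:rho:reliable} is not established.
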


Note that convergence of Algorithm~\ref{algorithm2} in the sense of $\lim_\ell U_\ell = u$ in $H^1(\Omega)$ is a~priori unclear since adaptive mesh-refinement does not guarantee that the local mesh-size tends uniformly to zero. However, we have the following convergence result which is proved in the frame of the estimator reduction concept from~\cite{afp}.

\begin{theorem}[convergence of AFEM]
\label{theorem:convergence}%
{\rm(i)}
Suppose that the discretization of the Dirichlet data guarantees
some a~priori convergence
\begin{align}\label{eq:apriori:dirichlet}
 \lim_{\ell\to\infty} \norm{g_\infty-\P_\ell g}{H^{1/2}(\Gamma_D)} = 0
\end{align}
with a certain limit $g_\infty\in H^{1/2}(\Gamma_D)$.
Then, for any choice of the adaptivity parameters $0<\theta_1,\theta_2,\vartheta<1$, Algorithm~\ref{algorithm2} guarantees convergence
\begin{align}\label{eq:convergence}
 \lim_{\ell\to\infty} \norm{u-U_\ell}{H^1(\Omega)} = 0
\end{align}
and, in particular, $g_\infty = g$.\\
{\rm(ii)} Assumption~\eqref{eq:apriori:dirichlet} is satisfied for the $H^{1/2}(\Gamma_D)$-orthogonal projection, the $L^2(\Gamma_D)$-projection for $p=1$, and the Scott-Zhang projection for arbitrary $p\ge1$.
\end{theorem}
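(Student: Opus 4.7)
The plan is to apply the \emph{estimator reduction concept} of~\cite{afp}: reduce the question of convergence to (a) a priori convergence of the Galerkin sequence $U_\ell\to u_\infty$ in $H^1(\Omega)$, (b) a perturbed contraction of the estimator of the form $\eta_{\ell+1}^2\le \kappa\,\eta_\ell^2+\alpha_\ell$ with $\kappa<1$ and $\alpha_\ell\to 0$, and (c) reliability from Proposition~\ref{prop:estimator} to translate $\eta_\ell\to 0$ into $\|u-U_\ell\|_{H^1(\Omega)}\to 0$. Identification $g_\infty=g$ then follows at the end by trace continuity, since $U_\ell|_{\Gamma_D}=\P_\ell g\to g_\infty$ on the one hand and $U_\ell|_{\Gamma_D}\to u|_{\Gamma_D}=g$ on the other.

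For step (a), the difficulty is that $\V_\ell=\SS^p(\TT_\ell)$ is non-nested. To circumvent this, I would introduce a uniformly $H^1$-stable discrete lifting $G_\ell\in\SS^p(\TT_\ell)$ of $\P_\ell g$ (for instance, the Scott--Zhang extension used boundary-wise only where information is needed). Writing $U_\ell=U_\ell^0+G_\ell$ with $U_\ell^0\in\SS^p_D(\TT_\ell)$, the function $U_\ell^0$ solves a Galerkin problem in the \emph{nested} family $\SS^p_D(\TT_\ell)\subseteq \SS^p_D(\TT_{\ell+1})$ with right-hand side perturbed by $-\dual{\nabla G_\ell}{\nabla\cdot}_\Omega$. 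Since $\P_\ell g\to g_\infty$ in $H^{1/2}(\Gamma_D)$ by hypothesis~\eqref{eq:apriori:dirichlet}, one can arrange $G_\ell\to G_\infty$ in $H^1(\Omega)$, so classical a priori convergence of Galerkin solutions for nested spaces with convergent data gives $U_\ell^0\to u_\infty^0$, hence $U_\ell\to u_\infty$ strongly in $H^1(\Omega)$.

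For step (b), newest vertex bisection halves $|T|$ on each bisected element, so for any $T\in\TT_\ell$ with a successor $T'\in\TT_{\ell+1}$ strictly refining $T$ one gains a uniform factor in the mesh-size weights of $\varrho$ and $\oscD{}$. Combined with the D\"orfler bulk property~\eqref{eq:doerfler:res} in case~(iii), and~\eqref{eq:doerfler:osc} in case~(iv), a Young-type inequality together with local Lipschitz dependence of $\varrho_\ell(T)$ on $U_\ell$ yields
\begin{align*}
 \varrho_{\ell+1}^2 &\le \kappa_1\,\varrho_\ell^2 + C\,\|\nabla(U_{\ell+1}-U_\ell)\|_{L^2(\Omega)}^2,\\
 \oscD{\ell+1}^2 &\le \kappa_2\,\oscD\ell^2
\end{align*}
with $\kappa_1,\kappa_2<1$ in the respective alternative of Algorithm~\ref{algorithm2}, and where the term involving $U_\ell$ drops out of the Dirichlet oscillations since they depend only on $g$. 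Combining both alternatives into a single perturbed contraction for $\eta_\ell^2=\varrho_\ell^2+\oscD\ell^2$, and invoking $\|\nabla(U_{\ell+1}-U_\ell)\|_{L^2(\Omega)}\to 0$ from step~(a), the elementary lemma of~\cite{afp} yields $\eta_\ell\to 0$. Reliability then gives~\eqref{eq:convergence} and the trace argument above completes part~(i).

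For part~(ii), newest vertex bisection produces nested discrete trace spaces $\SS^p(\EE_\ell^D)\subseteq\SS^p(\EE_{\ell+1}^D)$. The $H^{1/2}(\Gamma_D)$-orthogonal projection onto an increasing union of closed subspaces converges strongly to the orthogonal projection onto the closure, giving~\eqref{eq:apriori:dirichlet} immediately. For the $L^2(\Gamma_D)$-projection ($p=1$) and the Scott--Zhang projection ($p\ge1$), uniform $H^{1/2}$-stability~\eqref{eq:stability} combined with the projection identity $\P_\ell V_\ell=V_\ell$ for $V_\ell\in\SS^p(\EE_\ell^D)$ and a standard density argument (approximate $g$ by a finite-level discrete function, exploit nestedness, let the approximation improve) turns $\P_\ell g$ into a Cauchy sequence in $H^{1/2}(\Gamma_D)$, hence convergent. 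The main obstacle throughout is genuinely step~(a): the lifting trick decouples the Dirichlet convergence~\eqref{eq:apriori:dirichlet} from the missing Galerkin orthogonality, and the uniform $H^{1/2}$-stability of $\P_\ell$ is precisely what is needed to build such a lifting with the required properties.
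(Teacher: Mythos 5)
Your overall architecture matches the paper's (a priori convergence of $U_\ell$, perturbed estimator contraction via \cite{afp}, reliability, and the nested-subspace argument for the $H^{1/2}(\Gamma_D)$-orthogonal projection; your case-wise contraction bookkeeping in step (b) is an equivalent variant of Lemma~\ref{lemma:doerfler1} combined with Proposition~\ref{prop:reduction}, and the trace argument for $g_\infty=g$ is fine). However, the two steps you treat as routine are exactly where the real work lies. In step (a) you assert that, because $\P_\ell g\to g_\infty$ in $H^{1/2}(\Gamma_D)$, ``one can arrange'' discrete liftings $G_\ell\in\SS^p(\TT_\ell)$ of $\P_\ell g$ with $G_\ell\to G_\infty$ in $H^1(\Omega)$. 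This is the crux, not a triviality: the natural candidate $G_\ell=\sz_\ell\LL\widehat g_\ell$ depends on $\ell$ through the operator $\sz_\ell$ as well as through the data, and under adaptive refinement the mesh-size does not tend to zero uniformly, so no approximation estimate gives convergence of $\sz_\ell$ applied to a fixed function. The paper needs a separate result (Proposition~\ref{prop:apriori:scottzhang}) that $\lim_\ell\sz_\ell v$ exists for every $v\in H^1(\Omega)$, proved with the decomposition of $\Omega$ into the region where the mesh stabilizes, the region refined infinitely often, and a remainder of vanishing measure (following \cite{msv}), plus a compatibility assumption on the dual basis functions of $\sz_{\ell+1}$ on unchanged patches. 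Without this (or an equivalent construction of convergent discrete liftings), your step (a) is unproven.

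The second gap is in part (ii) for the $L^2(\Gamma_D)$-projection and the Scott-Zhang projection. Your ``standard density argument'' (approximate $g$ by a finite-level discrete function, use uniform stability and nestedness) requires $\inf_{V_k\in\SS^p(\EE_k^D)}\norm{g-V_k}{H^{1/2}(\Gamma_D)}\to0$, i.e.\ density of $\bigcup_\ell\SS^p(\EE_\ell^D)$ in $H^{1/2}(\Gamma_D)$. For adaptively generated meshes this is false in general, since the algorithm may never refine parts of $\Gamma_D$; indeed, if such density held, one would get $g_\infty=g$ for free, whereas the theorem can only identify $g_\infty=g$ a posteriori from $\eta_\ell\to0$. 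The paper instead proves (ii) for the $L^2$-projection via Lemma~\ref{lemma:apriori:orthogonal} (convergence in $L^2(\Gamma_D)$ without any density), the uniform $H^1(\Gamma_D)$-stability of the $L^2$-projection on newest-vertex-bisection meshes from \cite{kp}, and a weak-compactness/compact-embedding argument to upgrade to $H^{1/2}(\Gamma_D)$ (Corollary~\ref{cor:apriori:L2}); for the Scott-Zhang projection it again invokes Proposition~\ref{prop:apriori:scottzhang} through Corollary~\ref{cor:apriori:scottzhang}. So for these two projections your argument needs to be replaced, not merely elaborated.
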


Current quasi-optimality results on AFEM rely on the fact that the estimator
$\eta_\ell^2 = \varrho_\ell^2 + \oscD\ell^2$ is equivalent to some linear convergent quasi-error quantity $\Delta_\ell$. Whereas, the convergence theorem (Theorem~\ref{theorem:convergence}) also holds for the usual D\"orfler marking, our contraction theorem relies on Stevenson's modification~\eqref{eq:doerfler:res}--\eqref{eq:doerfler:osc}. Moreover, we stress that the convergence theorem is constrained by the a~priori convergence assumption~\eqref{eq:apriori:dirichlet}, whereas the following contraction result is not.

\begin{theorem}[contraction of AFEM]
\label{theorem:contraction}%
\revision{We use Algorithm~\ref{algorithm2} with (up to the general assumptions stated above) arbitrary projection $\P_\ell$ and corresponding discrete solution $U_\ell\in\SS^1(\TT_\ell)$. In addition, let}
$P_\ell:H^{1/2}(\Gamma_D)\to\SS^p(\EE_\ell^D)$ be the $H^{1/2}(\Gamma_D)$-orthogonal projection. Let $\widetilde U_\ell\in\SS^p(\TT_\ell)$ the Galerkin solution of~\eqref{eq:galerkin} with $\widetilde U_\ell|_{\Gamma_D} = P_\ell g$ and $\widetilde\eta_\ell^{\,2} = \widetilde\varrho_\ell^{\,2} + \oscD\ell^2$ be the associated error estimator from~\eqref{eq1:estimator2:T} with $U_\ell$ replaced by $\widetilde U_\ell$. Then, for arbitrary $0<\theta_1,\theta_2<1$ and sufficiently small $0<\vartheta<1$, Algorithm~\ref{algorithm2} guarantees the existence of constants $\lambda,\mu>0$ and $0<\kappa<1$ such that the combined error quantity
\begin{align}\label{eq:delta}
 \Delta_\ell
 := \norm{\nabla(u - \widetilde U_\ell)}{L^2(\Omega)}^2
 + \lambda\,\norm{g-P_\ell g}{H^{1/2}(\Gamma_D)}^2
 + \mu\,\widetilde\eta_\ell^{\,2}
 \ge 0
\end{align}
satisfies a contraction property
\begin{align}\label{eq:contraction}
 \Delta_{\ell+1} \le \kappa\,\Delta_\ell
 \quad\text{for all }\ell\in\N_0.
\end{align}
Moreover, there are constants $\setc{equivalence1},\setc{equivalence2}>0$ such that
\begin{align}\label{eq:equivalence}
 \c{equivalence1}\,\Delta_\ell
 \le \eta_\ell^2
 \le \c{equivalence2}\,\Delta_\ell.
\end{align}
In particular, this implies convergence
$\lim_\ell\norm{u-U_\ell}{H^1(\Omega)}=0=\lim_\ell\eta_\ell$
of Algorithm~\ref{algorithm2} independently of the precise choice of the uniformly $H^{1/2}(\Gamma_D)$-stable projection $\P_\ell$.
\end{theorem}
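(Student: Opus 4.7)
The plan is to run a Cascon--Kreuzer--Nochetto--Siebert style contraction argument on the auxiliary sequence $(\widetilde U_\ell,P_\ell g)$ built from the $H^{1/2}(\Gamma_D)$-orthogonal projection $P_\ell$, and then to transfer the conclusions to the algorithm's actual output $(U_\ell,\P_\ell g)$ via a quasi-local estimator equivalence. Because the discrete trace spaces are nested, $\SS^p(\EE_\ell^D)\subseteq\SS^p(\EE_{\ell+1}^D)$, orthogonality of $P_\ell$ gives the exact Pythagoras identity
\begin{align*}
\norm{g-P_{\ell+1}g}{H^{1/2}(\Gamma_D)}^2 = \norm{g-P_\ell g}{H^{1/2}(\Gamma_D)}^2 - \norm{P_{\ell+1}g-P_\ell g}{H^{1/2}(\Gamma_D)}^2,
\end{align*}
which yields monotone decay of the $\lambda$-weighted summand of $\Delta_\ell$ and a reservoir that will absorb the perturbations coming from the other two summands.

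For the Galerkin part, a quasi-Pythagoras theorem is needed. Since the affine ansatz spaces for~\eqref{eq:galerkin} are not nested, I would fix a bounded linear lift $\LL\colon H^{1/2}(\Gamma_D)\to H^1(\Omega)$ and use $\widetilde U_{\ell+1}-\widetilde U_\ell-\LL(P_{\ell+1}g-P_\ell g)\in\SS^p_D(\TT_{\ell+1})$ as a legitimate test function in~\eqref{eq:galerkin} at level $\ell+1$. After Young's inequality this produces, for any $\delta>0$, an inequality of the shape
\begin{align*}
\norm{\nabla(u-\widetilde U_{\ell+1})}{L^2(\Omega)}^2 \le (1+\delta)\,\norm{\nabla(u-\widetilde U_\ell)}{L^2(\Omega)}^2 - \norm{\nabla(\widetilde U_{\ell+1}-\widetilde U_\ell)}{L^2(\Omega)}^2 + C_\delta\,\norm{P_{\ell+1}g-P_\ell g}{H^{1/2}(\Gamma_D)}^2,
\end{align*}
where the remainder is exactly the quantity telescoped by the $H^{1/2}$-Pythagoras. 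Combined with the standard perturbation-and-refinement argument for residual indicators (newest vertex bisection halves the element-size weights on refined elements) this gives an estimator reduction $\widetilde\varrho_{\ell+1}^{\,2}\le q\,\widetilde\varrho_\ell^{\,2} + C\,\norm{\nabla(\widetilde U_{\ell+1}-\widetilde U_\ell)}{L^2(\Omega)}^2$ whenever D\"orfler is satisfied for $\widetilde\varrho_\ell$, and an analogous reduction for $\oscD\ell$ after step~(iv) of Algorithm~\ref{algorithm2}.

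The D\"orfler test in Algorithm~\ref{algorithm2} is formulated for $\varrho_\ell$, not $\widetilde\varrho_\ell$; I would bridge this by a quasi-local equivalence. Proposition~\ref{prop:cea} and uniform $H^{1/2}$-stability give
\begin{align*}
\norm{\nabla(U_\ell-\widetilde U_\ell)}{L^2(\Omega)} \lesssim \norm{(\P_\ell-P_\ell)g}{H^{1/2}(\Gamma_D)} \lesssim \norm{g-P_\ell g}{H^{1/2}(\Gamma_D)},
\end{align*}
and a Faermann-type localisation of the $H^{1/2}$-norm borrowed from adaptive BEM~\cite{cms,fkmp,kop} bounds the latter by $\oscD\ell$. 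Hence $|\varrho_\ell^2-\widetilde\varrho_\ell^{\,2}|\lesssim\oscD\ell^2$, which yields both the two-sided equivalence~\eqref{eq:equivalence} and, when $\vartheta$ is chosen small enough, the implication that D\"orfler for $\varrho_\ell$ in case~(iii) entails D\"orfler (with a slightly relaxed parameter) for $\widetilde\varrho_\ell$. Combining the three building blocks with suitable weights $\lambda,\mu>0$ in the usual CKNS fashion now produces $\Delta_{\ell+1}\le\kappa\,\Delta_\ell$: in case~(iii) the estimator reduction drives the contraction while the $H^{1/2}$-Pythagoras absorbs the quasi-Pythagoras remainder; in case~(iv) the $\oscD\ell$-reduction together with the monotone decay of the $H^{1/2}$-error does the work, and reliability of $\widetilde\eta_\ell$ prevents the energy-error summand from growing. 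Finally, $\eta_\ell\to 0$ by~\eqref{eq:equivalence} and $U_\ell\to u$ in $H^1(\Omega)$ by reliability from Proposition~\ref{prop:estimator}.

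The main obstacle I anticipate is the simultaneous calibration of $\lambda$, $\mu$ and $\vartheta$: the Young parameter $\delta$ in the quasi-Pythagoras forces $\lambda$ to be large enough that the weighted decay of $\norm{P_{\ell+1}g-P_\ell g}{H^{1/2}(\Gamma_D)}^2$ absorbs the $\delta$-loss, but $\vartheta$ must be small enough that the equivalence $\varrho_\ell\sim\widetilde\varrho_\ell$ does not destroy D\"orfler, and $\mu$ must be balanced so that the estimator-reduction contribution still dominates the remaining terms. A secondary hurdle is that the non-local $H^{1/2}(\Gamma_D)$-norm appears both in the quasi-Pythagoras remainder and in the estimator equivalence, so the BEM-style localisation has to be used twice and must be shown to be quasi-local with respect to the current mesh; this is precisely where our analysis departs from the homogeneous-Dirichlet theory of~\cite{ckns}.
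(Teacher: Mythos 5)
Your route is essentially the paper's own: exact Pythagoras in $H^{1/2}(\Gamma_D)$ for the orthogonal projection $P_\ell$ (nested trace spaces), a quasi-Pythagoras estimate for the energy error with remainder $\norm{(P_{\ell+1}-P_\ell)g}{H^{1/2}(\Gamma_D)}^2$, transfer of the modified marking for $\varrho_\ell$ to D\"orfler marking for $\widetilde\eta_\ell$ through a quasi-local estimator equivalence valid for small $\vartheta$, estimator reduction for the auxiliary estimator, reliability together with $\norm{(1-P_\ell)g}{H^{1/2}(\Gamma_D)}\lesssim\oscD\ell$ to calibrate the weights, and finally $\Delta_\ell\simeq\widetilde\eta_\ell^{\,2}\simeq\eta_\ell^{2}$. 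This is precisely the content of Lemma~\ref{lemma:equivalence}, Lemma~\ref{lemma:doerfler2}, Lemma~\ref{lemma:pythagoras} and the proof of Theorem~\ref{theorem:contraction}.

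There is, however, one step that fails as written. In your quasi-Pythagoras argument you propose to test the Galerkin formulation on $\TT_{\ell+1}$ with $\widetilde U_{\ell+1}-\widetilde U_\ell-\LL(P_{\ell+1}g-P_\ell g)$. The lifting $\LL(P_{\ell+1}g-P_\ell g)$ lies in $H^1(\Omega)$ but is not piecewise polynomial, so this function belongs to $H^1_D(\Omega)$ but in general not to $\SS^p_D(\TT_{\ell+1})$, and the discrete Galerkin orthogonality cannot be applied to it. The repair is exactly what the paper does: replace $\LL\widehat g$ by $\sz_{\ell+1}\LL\widehat g$, where $\widehat g$ is an arbitrary extension of $(P_{\ell+1}-P_\ell)g$; since $(P_{\ell+1}-P_\ell)g\in\SS^p(\EE_{\ell+1}^D)$ is a discrete trace, the Scott-Zhang projection preserves it, so $\sz_{\ell+1}\LL\widehat g$ is discrete with the correct Dirichlet data, and stability of $\sz_{\ell+1}$ and $\LL$ plus infimizing over extensions yields the bound by $\norm{(P_{\ell+1}-P_\ell)g}{H^{1/2}(\Gamma_D)}$. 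In addition, to use Galerkin orthogonality on the cross term $\dual{\nabla(u-\widetilde U_{\ell+1})}{\nabla(\widetilde U_{\ell+1}-\widetilde U_\ell)}_\Omega$ the paper introduces the intermediate solution $\widetilde U_{\ell+1}^\ell\in\SS^p(\TT_{\ell+1})$ with Dirichlet data $P_\ell g$, since $\widetilde U_{\ell+1}^\ell-\widetilde U_\ell\in\SS^p_D(\TT_{\ell+1})$, and then bounds $\norm{\nabla(\widetilde U_{\ell+1}-\widetilde U_{\ell+1}^\ell)}{L^2(\Omega)}$ by the same discrete-lifting device. Note also that your estimate $\norm{\nabla(U_\ell-\widetilde U_\ell)}{L^2(\Omega)}\lesssim\norm{(\P_\ell-P_\ell)g}{H^{1/2}(\Gamma_D)}$ does not follow from the C\'ea lemma but from this same construction (Lemma~\ref{lemma:equivalence}), and that for the marking transfer you need the equivalence in its elementwise form, $\sum_{T\in\MM_\ell}\varrho_\ell(T)^2\lesssim\sum_{T\in\MM_\ell}\widetilde\varrho_\ell(T)^2+\oscD\ell^2$, not merely the global bound $|\varrho_\ell^2-\widetilde\varrho_\ell^{\,2}|\lesssim\oscD\ell^2$; your quasi-locality caveat covers this, and the remaining weight calibration is the standard bookkeeping carried out in the paper.
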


\revision{%
\begin{remark}
The $H^{1/2}(\Gamma_D)$-orthogonal projection is \emph{not} needed for the 
implementation and can, in fact, hardly be computed explicitly. Instead, it is only
used for the numerical analysis. More precisely, we will see below that the 
modified D\"orfler marking~\eqref{eq:doerfler:res}--\eqref{eq:doerfler:osc}
for the $\P_\ell$ chosen (with corresponding discrete solution $U_\ell$ and
error estimator $\varrho_\ell$) implies
the usual D\"orfler marking for the \emph{theoretical auxiliary} problem
with the $H^{1/2}(\Gamma_D)$-orthogonal projection $P_\ell$ and corresponding 
solution $\widetilde U_\ell$ resp.\ error estimator $\widetilde\varrho_\ell$.
\end{remark}%
}

To state our quasi-optimality result for Algorithm~\ref{algorithm2}, we need to introduce further notation. Recall that, for a given triangulation $\TT_\ell$ and $\MM_\ell\subseteq\TT_\ell$,
\begin{align}
 \TT_{\ell+1}=\refine(\TT_\ell,\MM_\ell)
\end{align}
denotes the coarsest regular triangulation such that all marked elements
$T\in\MM_\ell$ have been refined by (at least one) bisection. Moreover, we write
\begin{align}
 \TT_\star = \refine(\TT_\ell)
\end{align}
if $\TT_\star$ is a finite refinement of $\TT_\ell$, i.e., there are finitely many
triangulations $\TT_{\ell+1},\dots,\TT_n$ and sets of marked elements
$\MM_\ell\subseteq\TT_\ell,\dots,\MM_{n-1}\subseteq\TT_{n-1}$ such that
$\TT_\star=\TT_n$ and $\TT_{j+1}=\refine(\TT_j,\MM_j)$ for all $j=\ell,\dots,n-1$. Finally, for a fixed initial mesh $\TT_0$, let $\T = \set{\TT_\star}{\TT_\star=\refine(\TT_0)}$ be
the set of all meshes which can be obtained by newest vertex bisection as well as the set $\T_N = \set{\TT_\star\in\T}{\#\TT_\star-\#\TT_0\le N}$ of all triangulations which have at most $N$ more elements than the initial mesh $\TT_0$.

Recall that Algorithm~\ref{algorithm2} only sees the error estimator $\eta_\ell^2 = \varrho_\ell^2 + \oscD\ell^2$, but not the error $\norm{u-U_\ell}{H^1(\Omega)}$. From this point of view, it is natural to ask for the best possible convergence rate for the error estimator. This can be characterized by means of an artificial approximation class $\A_s$: For $s\ge0$, we write
\begin{align}\label{eq:approximationclas}
 (u,f,g,\phi) \in \A_s
 \quad\stackrel{\text{def}}{\Longleftrightarrow}\quad
 \sup_{N\in\N} \inf_{\TT_\star\in\T_N} N^s\eta_\star < \infty,
\end{align}
where $\eta_\star^2 = \varrho_\star^2 +\oscD\star^2$ denotes the error estimator for the optimal mesh $\TT_\star\in\T_N$. By definition, this implies that a convergence rate $\eta_\star = \OO(N^{-s})$ is possible if the optimal meshes are chosen. The following theorem states that Algorithm~\ref{algorithm2}, in fact, guarantees $\eta_\ell = \OO(N^{-s})$ for the adaptively generated meshes $\TT_\ell$.

\begin{theorem}[quasi-optimality of AFEM]
\label{theorem:quasioptimal}%
Suppose that the sets $\MM_\ell$ resp.\ $\MM_\ell^D$ in step~(iii)--(iv) of Algorithm~\ref{algorithm2} are chosen with minimal cardinality.
Then, for sufficiently small $0<\theta_1,\vartheta<1$,
but arbitrary $0<\theta_2<1$,
Algorithm~\ref{algorithm2} guarantees the existence of a constant $\setc{optimal}>0$ such that
\begin{align}
 (u,f,g,\phi) \in \A_s
 \quad\Longleftrightarrow\quad
 \forall\ell\in\N\quad
 \eta_\ell
 \le \c{optimal}(\#\TT_\ell - \#\TT_0)^{-s},
\end{align}
i.e.\ each possible convergence rate $s>0$ is, in fact, asymptotically obtained
by AFEM.
\end{theorem}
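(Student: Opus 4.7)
My plan is to adapt the Stevenson/CKNS quasi-optimality argument~\cite{ckns,stevenson} to the present inhomogeneous Dirichlet setting. The bridge between the algorithm (which only sees $\eta_\ell$ associated with $\P_\ell$ and $U_\ell$) and the contraction of the auxiliary quantity $\Delta_\ell$ is twofold: the equivalence $\eta_\ell^2 \simeq \Delta_\ell$ from~\eqref{eq:equivalence} together with the geometric decay of $\Delta_\ell$ established in Theorem~\ref{theorem:contraction}; and the observation in the preceding remark that the modified D\"orfler marking~\eqref{eq:doerfler:res}--\eqref{eq:doerfler:osc} for $(\varrho_\ell,\oscD\ell)$ implies the usual D\"orfler marking for $(\widetilde\varrho_\ell,\oscD\ell)$ on the auxiliary $H^{1/2}(\Gamma_D)$-orthogonal problem.

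The first step will be a \emph{discrete reliability} estimate for the auxiliary problem: for any refinement $\TT_\star$ of $\TT_\ell$,
\begin{align*}
\norm{\nabla(\widetilde U_\star-\widetilde U_\ell)}{L^2(\Omega)}^2
\lesssim \sum_{T\in\TT_\ell\setminus\TT_\star}\widetilde\varrho_\ell(T)^2
+ \norm{P_\star g - P_\ell g}{H^{1/2}(\Gamma_D)}^2,
\end{align*}
obtained by a Galerkin-type computation combining the $H^{1/2}$-stability of $P_\ell$ with the localization arguments for the $H^{1/2}$-norm on refined boundary facets from~\cite{cms,fkmp,kop} already used in the contraction proof. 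Coupled with the efficiency bound~\eqref{eq:rho:efficient} and the contraction of $\Delta_\ell$, this yields the \emph{optimality of D\"orfler marking}: there exist thresholds $0<\theta_1^\star,\vartheta^\star,\mu^\star<1$ such that, for $0<\theta_1<\theta_1^\star$ and $0<\vartheta<\vartheta^\star$, any $\TT_\star$ with $\widetilde\eta_\star^{\,2}\le\mu^\star\,\widetilde\eta_\ell^{\,2}$ forces the refined set $\RR_{\ell\to\star}:=\TT_\ell\setminus\TT_\star$ to satisfy a D\"orfler condition for $\widetilde\varrho_\ell$; by the bridge remark, this transfers to $\varrho_\ell$ in branch~(iii), and an analogous statement for $\oscD\ell$ governs branch~(iv).

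Given $(u,f,g,\phi)\in\A_s$, I would then choose, for each $\ell$, the minimal $N_\ell$ such that the optimal mesh $\TT_\star\in\T_{N_\ell}$ satisfies $\eta_\star^{\,2}\le\mu'\,\eta_\ell^2$ (with $\mu'$ translated from $\mu^\star$ via~\eqref{eq:equivalence}), which forces $N_\ell \lesssim |(u,f,g,\phi)|_{\A_s}^{1/s}\,\eta_\ell^{-1/s}$. The minimal-cardinality choice of $\MM_\ell$ (resp.\ $\MM_\ell^D$ in branch~(iv)) together with the optimality lemma yields $\#\MM_\ell \lesssim N_\ell$. The Binev--Dahmen--DeVore/Stevenson closure estimate $\#\TT_\ell - \#\TT_0 \lesssim \sum_{j<\ell}\#\MM_j$, combined with the geometric decay $\eta_j \le q^{(\ell-j)/2}\eta_\ell$ (from Theorem~\ref{theorem:contraction} and~\eqref{eq:equivalence}) and summation of the resulting geometric series, gives
\begin{align*}
\#\TT_\ell - \#\TT_0 \lesssim |(u,f,g,\phi)|_{\A_s}^{1/s}\,\eta_\ell^{-1/s},
\end{align*}
which is the desired bound $\eta_\ell \le \c{optimal}(\#\TT_\ell-\#\TT_0)^{-s}$. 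The reverse implication is immediate from the definition~\eqref{eq:approximationclas} of $\A_s$, since every adaptively generated $\TT_\ell$ lies in $\T_{\#\TT_\ell-\#\TT_0}$.

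The main obstacle will be the discrete reliability estimate for the auxiliary problem, because the non-nestedness of the ansatz spaces $\V_\ell\not\subseteq\V_{\ell+1}$ caused by the Dirichlet data discretization obstructs the usual Galerkin-orthogonality step. Its resolution will rely on localizing the $H^{1/2}(\Gamma_D)$-norm of $P_\star g - P_\ell g$ to the refined boundary facets in terms of $\oscD\ell$, as in the quasi-Pythagoras argument underlying Theorem~\ref{theorem:contraction}, and on the fact that $\oscD\ell$ is purely data-dependent so that the separate D\"orfler condition~\eqref{eq:doerfler:osc} alone provides sufficient reduction of the Dirichlet oscillations (and hence of $\widetilde\eta_\ell$) in branch~(iv).
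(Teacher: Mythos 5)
Your global skeleton (overlay with an optimal mesh, $\eps\simeq\delta\,\eta_\ell$, minimal cardinality, mesh-closure estimate, contraction of $\Delta_\ell$ plus the equivalence~\eqref{eq:equivalence}, geometric series; converse trivial) coincides with the paper's proof. The gap is in your first and decisive step, the discrete reliability bound. You formulate it for the \emph{$H^{1/2}(\Gamma_D)$-orthogonal} auxiliary problem and plan to localize $\norm{P_\star g-P_\ell g}{H^{1/2}(\Gamma_D)}$ to the refined Dirichlet facets ``as in the quasi-Pythagoras argument''. But Lemma~\ref{lemma:pythagoras} contains no such localization: there the term $\norm{(P_\star-P_\ell)g}{H^{1/2}(\Gamma_D)}$ is only handled \emph{globally}, via the telescoping orthogonality $\norm{(1-P_{\ell+1})g}{H^{1/2}(\Gamma_D)}^2+\norm{(P_{\ell+1}-P_\ell)g}{H^{1/2}(\Gamma_D)}^2=\norm{(1-P_\ell)g}{H^{1/2}(\Gamma_D)}^2$, which is useless for locality. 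Since $P_\ell$ is a global projection, $(P_\star-P_\ell)g$ does not vanish away from the refined region, so the partition-of-unity localization of the $H^{1/2}$-norm from~\cite{cms,fkmp} is not applicable; the only available bound is the global one $\norm{(P_\star-P_\ell)g}{H^{1/2}(\Gamma_D)}\lesssim\oscD{\ell}$, and with that the optimality-of-marking step breaks down: in the splitting of $\widetilde\eta_\ell^{\,2}$ into refined and non-refined contributions the full $\oscD{\ell}^2$ can be of the same size as $\widetilde\eta_\ell^{\,2}$ and cannot be absorbed into a sum over a set of cardinality $\lesssim\#(\TT_\ell\backslash\TT_\star)$.

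This is precisely why the paper introduces a \emph{second} auxiliary problem for the optimality analysis: discrete local reliability (Proposition~\ref{prop:dlr}) is proved for the \emph{Scott-Zhang} discretization $\widetilde U_\ell|_{\Gamma_D}=\sz_\ell g$, because $\sz_\ell$ is local, so $(\sz_\star-\sz_\ell)g=0$ outside a fixed number of layers around the refined Dirichlet facets. Only then can the $H^{1/2}$-norm be localized by hat functions, interpolation, and Proposition~\ref{lemma:oscD}, yielding a bound by $\sum_{E}\oscD{\ell}(E)^2$ over a set $\widetilde\RR_\ell^D$ with $\#\widetilde\RR_\ell^D\lesssim\#(\TT_\ell\backslash\TT_\star)$; the transfer back to the estimator $\eta_\ell$ actually seen by Algorithm~\ref{algorithm2} is done in both marking branches through Lemma~\ref{lemma:equivalence}, while the $H^{1/2}$-orthogonal projection is used only for the contraction $\Delta_\ell\simeq\eta_\ell^2$. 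A further, minor deviation: you invoke efficiency~\eqref{eq:rho:efficient} to obtain optimality of the D\"orfler marking; in the paper's estimator-based approximation class this is neither needed nor wanted (the thresholds for $\theta_1,\vartheta$ are explicitly independent of $\c{rho:efficient}$), and Corollary~\ref{prop:doerfler} is proved from discrete local reliability and the estimator-reduction argument alone. To repair your proof you must either establish a genuinely local discrete reliability for the $H^{1/2}$-orthogonal projection (which the non-locality of $P_\ell$ obstructs) or, as the paper does, switch the auxiliary problem in this section to the Scott-Zhang projection.
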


We stress that, up to now and as far as the error estimator is concerned, only reliability~\eqref{eq:rho:reliable} is needed for the analysis. \revision{In particular,
the upper bounds on the \emph{sufficiently small} adaptivity parameters $\theta_1$
and $\vartheta$ do not depend on the efficiency constant $\c{rho:efficient}$
from~\eqref{eq:rho:efficient}. This is in contrast to the preceding works on AFEM,
e.g.~\cite{bm,ckns,fpp,ks,stevenson}, which directly ask for optimal convergence of the
error (Theorem~\ref{theorem:approximationclass}).}
Finally, the lower bound~\eqref{eq:rho:efficient} for the error estimator allows to characterize the approximation class $\A_s$ in terms of the regularity of the sought solution and the given data.
\revision{In particular, we obtain a quasi-optimality result which is analogous
to those available in the literature for homogeneous Dirichlet data, but with less
dependencies for the upper bound of the adaptivity parameters.}

\begin{theorem}[characterization of $\A_s$]
\label{theorem:approximationclass}%
It holds $(u,f,g,\phi) \in \A_s$ if and only if the following four conditions hold:
\begin{align}
 &\sup_{N \in \N}\inf_{\TT_\star\in\T_N}
 \min_{V_\star\in\SS^p(\TT_\star)} N^s\norm{u-V_\star}{H^1(\Omega)} < \infty,\label{eq:Achar1}\\
 &\sup_{N \in \N}\inf_{\TT_\star\in\T_N} N^s\oscT\star < \infty,\\
 &\sup_{N \in \N}\inf_{\TT_\star\in\T_N} N^s\oscN\star < \infty,\\
 &\sup_{N \in \N}\inf_{\TT_\star\in\T_N} N^s\oscD\star < \infty,\label{eq:Achar4}
\end{align}
i.e.\ the estimator ---and according to reliability hence the Galerkin error--- converges with the best possible rate allowed by the regularity of the sought solution and the given data.
\end{theorem}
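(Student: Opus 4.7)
The plan is to prove the two implications separately: the forward direction ($\Rightarrow$) amounts to a term\-wise comparison of the individual contributions to $\eta_\star$ against the four quantities in~\eqref{eq:Achar1}--\eqref{eq:Achar4} on every single mesh $\TT_\star\in\T_N$, using only reliability~\eqref{eq:rho:reliable} together with the $L^2$-optimality of the local projections $\Pi_T,\Pi_E$. The backward direction ($\Leftarrow$) instead combines four different near-optimal meshes (one per quantity) into a single mesh via the newest-vertex-bisection overlay estimate and then invokes efficiency~\eqref{eq:rho:efficient} together with the C\'ea estimate~\eqref{eq:cea}.

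For the forward direction, fix any $\TT_\star\in\T_N$. Since $\eta_\star^2=\varrho_\star^2+\oscD\star^2$, we immediately read off $\oscD\star\le\eta_\star$. Because $\Delta U_\star|_T\in\PP^{p-2}(T)$ is fixed by $\Pi_T$, one has $\oscT\star(T)=|T|^{1/d}\,\norm{(1-\Pi_T)f}{L^2(T)}$, and $L^2$-optimality of $\Pi_T$ over $\PP^{p-1}(T)$ yields $\oscT\star(T)\le|T|^{1/d}\,\norm{f+\Delta U_\star}{L^2(T)}\le\varrho_\star(T)$. Likewise, for each Neumann facet $E$ the trace $\partial_nU_\star|_E$ is a polynomial of degree at most $p-1$, so that $L^2$-optimality of $\Pi_E$ followed by summation gives $\oscN\star\le\varrho_\star$. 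Finally, $U_\star\in\SS^p(\TT_\star)$ together with reliability yields
\[
 \min_{V_\star\in\SS^p(\TT_\star)}\norm{u-V_\star}{H^1(\Omega)}
 \le \norm{u-U_\star}{H^1(\Omega)}
 \le \sqrt{\c{rho:reliable}}\,\eta_\star.
\]
Taking $\inf_{\TT_\star\in\T_N}$ on the left and multiplying by $N^s$, each of the four suprema~\eqref{eq:Achar1}--\eqref{eq:Achar4} is bounded by a constant times $\sup_{N\in\N}\inf_{\TT_\star\in\T_N}N^s\eta_\star<\infty$, as desired.

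For the backward direction, assume the four suprema are finite and fix $N\in\N$. Choose integers $N_1,\dots,N_4\ge\lfloor N/4\rfloor$ with $\sum_iN_i\le N$, and for each $i$ pick a mesh $\TT_i\in\T_{N_i}$ that almost realizes the $i$-th infimum. Let $\TT_\star$ be the coarsest common refinement of $\TT_1,\dots,\TT_4$. The newest-vertex-bisection overlay estimate, see e.g.~\cite{ckns,stevenson:nvb}, gives
\[
 \#\TT_\star-\#\TT_0\le\sum_{i=1}^4(\#\TT_i-\#\TT_0)\le N,
\]
hence $\TT_\star\in\T_N$. Monotonicity of the oscillation terms under refinement (the $L^2$-projections $\Pi_T,\Pi_E$ on the finer mesh are optimal over larger polynomial subspaces, and the weights $|T|^{1/d}$ only decrease) yields $\oscT\star\le\oscT{2}$, $\oscN\star\le\oscN{3}$, and $\oscD\star\le\oscD{4}$, while the nestedness $\SS^p(\TT_1)\subseteq\SS^p(\TT_\star)$ gives the analogous bound for the best-approximation term. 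Combining the C\'ea estimate~\eqref{eq:cea} with efficiency~\eqref{eq:rho:efficient}, we conclude
\[
 \eta_\star^2
 \le \c{rho:efficient}\bigl(\c{cea}^2\min_{V_\star\in\SS^p(\TT_\star)}\norm{u-V_\star}{H^1(\Omega)}^2+\oscT\star^2+\oscN\star^2+\oscD\star^2\bigr)
 \le C\,N^{-2s},
\]
with $C$ depending only on the four suprema and on $\c{cea},\c{rho:efficient}$. This proves $(u,f,g,\phi)\in\A_s$.

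The only non-routine ingredients are (i) the newest-vertex-bisection overlay estimate and (ii) the monotonicity of each oscillation term under refinement; both are standard, and the new Dirichlet-oscillation term $\oscD\ell$ reacts in exactly the same way as $\oscT\ell$ and $\oscN\ell$ (optimality of a local $L^2$-projection plus a shrinking weight $|T|^{1/d}$). Hence no genuinely new difficulty arises beyond Propositions~\ref{prop:cea} and~\ref{prop:estimator}, and the characterization follows by essentially the same scheme as in the homogeneous-data analyses~\cite{ckns,stevenson}.
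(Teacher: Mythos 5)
Your proposal is correct and follows essentially the same route as the paper: the forward direction bounds each oscillation term by $\varrho_\star\le\eta_\star$ (using $L^2$-optimality of $\Pi_T,\Pi_E$) and uses reliability with $U_\star$ as competitor, while the backward direction overlays four near-optimal meshes, exploits monotonicity of the (data-only) oscillations and nestedness of $\SS^p$, and concludes via efficiency plus the C\'ea estimate. Your justification of the monotonicity step (the discrete-solution contribution drops out since $\Delta U_\star|_T\in\PP^{p-2}(T)$ and $\partial_nU_\star|_E\in\PP^{p-1}(E)$) is slightly more explicit than the paper's, but the argument is the same.
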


\subsection{Outline}
Since our analysis is strongly built on properties of the Scott-Zhang projection, Section~\ref{section:scottzhang} collects the essential properties of the latter. This knowledge is used to prove Proposition~\ref{prop:cea}. Moreover, we prove that the Scott-Zhang error in a weighted $H^1(\Gamma_D)$-seminorm is 
\revision{(even locally)}
equivalent to the Dirichlet oscillations
\revision{(Proposition~\ref{lemma:oscD}) which might be of general interest.}
 This allows to prove Proposition~\ref{prop:estimator} with an estimator $\eta_\ell$ which does not explicitly contain the chosen projection $\P_\ell$. Section~\ref{section:convergence} is concerned with the proof of Theorem~\ref{theorem:convergence}. Section~\ref{section:contraction} gives the proof for the contraction result of Theorem~\ref{theorem:contraction}. Finally, the proof of the quasi-optimality results of Theorem~\ref{theorem:quasioptimal} and Theorem~\ref{theorem:approximationclass} are found in Section~\ref{section:optimality}. Some numerical experiments in Section~\ref{section:numerics} conclude the work.

In all statements, the constants involved and their dependencies are explicitly
stated. In proofs, however, we use the symbol $\lesssim$ to abbreviate $\le$ up to a multiplicative constant.
Moreover, $\simeq$ abbreviates that both estimates $\lesssim$ and $\gtrsim$ hold.

\section{Scott-Zhang Projection}
\label{section:scottzhang}%

\noindent%
The main tool of our analysis is the Scott-Zhang projection
\begin{align}
 \sz_\ell :H^1(\Omega) \to \SS^p(\TT_\ell)
\end{align}
from~\cite{sz}. A first application will be the proof of the C\'ea-type estimate for the Galerkin error (Proposition~\ref{prop:cea}). Moreover, we prove that the Scott-Zhang interpolation error
in a locally weighted $H^1$-seminorm is locally equivalent to the Dirichlet data oscillations (Proposition~\ref{lemma:oscD}). This will be the main tool to derive the bound $\norm{(1-\P_\ell)g}{H^{1/2}(\Gamma_D)} \lesssim \oscD{\ell}$.

\subsection{Scott-Zhang projection}
Analyzing the definition of $\sz_\ell$ in~\cite{sz}, one sees that $\sz_\ell$ can be defined locally in the following sense:
\begin{itemize}
\item For an element $T\in\TT_\ell$, the value $(\sz_\ell w)|_T$ on $T$ depends
only on the value of $w|_{\omega_{\ell,T}}$ on some element patch
\begin{align}
 T \subseteq \omega_{\ell,T} \subseteq \set{T'\in\TT_\ell}{T'\cap T\neq\emptyset}.
\end{align}
\item For a boundary facet $E\in\EE_\ell^\Gamma$, the trace of the Scott-Zhang projection $(\sz_\ell w)|_E$ on $E$ depends only on the trace $w|_{\omega_{\ell,E}^\Gamma}$ on some facet patch
\begin{align}
 E \subseteq \omega_{\ell,E}^\Gamma \subseteq \set{E'\in\EE_\ell}{E'\cap E\neq\emptyset}.
\end{align}
\item In case of a Dirichlet facet $E\in\EE_\ell^D$, one may choose
$\omega_{\ell,E}^\Gamma \subseteq \overline\Gamma_D$.
\end{itemize}
Moreover, $\sz_\ell$ is defined in a way that the following projection properties
hold:
\begin{itemize}
\item $\sz_\ell W_\ell = W_\ell$ for all $W_\ell\in\SS^p(\TT_\ell)$,
\item $(\sz_\ell w)|_\Gamma = w|_\Gamma$ for all $w\in H^1(\Omega)$ and
 $W_\ell\in\SS^p(\TT_\ell)$ with $w|_\Gamma = W_\ell|_\Gamma$,
\item $(\sz_\ell w)|_{\Gamma_D} = w|_{\Gamma_D}$ for all $w\in H^1(\Omega)$ and
 $W_\ell\in\SS^p(\TT_\ell)$ with $w|_{\Gamma_D} = W_\ell|_{\Gamma_D}$,
\end{itemize}
i.e.\ the projection $\sz_\ell$ preserves discrete (Dirichlet) boundary data.
Finally, $\sz_\ell$ satisfies the following (local) stability property
\begin{align}
\label{eq:scottzhangstability}
 \norm{\nabla(1-\sz_\ell) w}{L^2(T)}
 \leq \c{scottzhang}\,\norm{\nabla w}{L^2(\omega_{\ell,T})}\quad \text{for all } w \in H^1(\Omega)
\end{align}
and (local) first-order approximation property
\begin{align}
\label{eq:scottzhangapprox}
 \norm{(1-\sz_\ell) w}{L^2(T)} \leq \c{scottzhang}\,\norm{h_\ell\nabla w}{L^2(\omega_{\ell,T})}\quad \text{for all } w \in H^1(\Omega)
\end{align}
where $\setc{scottzhang}>0$ depends only on shape regularity of $\TT_\ell$, cf.~\cite{sz}.
Here, $h_\ell\in L^\infty(\Omega)$ denotes the local mesh-width function defined by $h_\ell|_T = |T|^{1/d}$ for all $T\in\TT_\ell$.
Moreover, since the overlap of the patches is controlled in terms of shape regularity, the integration domains in~\eqref{eq:scottzhangstability}--\eqref{eq:scottzhangapprox} can be replaced by $\Omega$, i.e.~\eqref{eq:scottzhangstability}--\eqref{eq:scottzhangapprox} hold also globally.

\subsection{Scott-Zhang projection onto discrete trace spaces}
We stress that $\sz_\ell$ induces operators
\begin{align}
 \sz_\ell^\Gamma:L^2(\Gamma)\to\SS^p(\EE_\ell^\Gamma)
 \quad\text{and}\quad
 \sz_\ell^D:L^2(\Gamma_D)\to\SS^p(\EE_\ell^D)
\end{align}
in the sense of $\sz_\ell^\Gamma(w|_\Gamma) = (\sz_\ell w)|_\Gamma$
and $\sz_\ell^D(w|_{\Gamma_D}) = (\sz_\ell^\Gamma(w|_\Gamma))|_{\Gamma_D}$
for all $w\in H^1(\Omega)$. We will thus not distinguish these operators
notationally.
Arguing as in~\cite{sz}, for
$\gamma\in\{\Gamma,\Gamma_D,\Gamma_N\}$, one sees that $\sz_\ell$ satisfies even (local) $L^2$-stability
\begin{align}\label{eq:scottzhang:L2(Gamma)}
 \norm{(1-\sz_\ell) w}{L^2(E)}
 \le \c{scottzhang}\,\norm{w}{L^2(\omega_{\ell,E}^\Gamma)}
 \quad\text{for all }w\in L^2(\gamma),
\end{align}
(local) $H^1$-stability
\begin{align}\label{eq:scottzhang:H1(Gamma)}
 \norm{(1-\sz_\ell) w}{H^1(E)}
 \le \c{scottzhang}\,\norm{\nabla_\Gamma w}{L^2(\omega_{\ell,E}^\Gamma)}
 \quad\text{for all }w\in H^1(\gamma),
\end{align}
as well as a (local) first-order approximation property
\begin{align}\label{eq:scottzhang:approx:H1(Gamma)}
 \norm{(1-\sz_\ell) w}{L^2(E)}
 \le \c{scottzhang}\,\norm{h_\ell\nabla_\Gamma w}{L^2(\omega_{\ell,E}^\Gamma)}
 \quad\text{for all }w\in H^1(\gamma).
\end{align}
Here, $\nabla_\Gamma(\cdot)$ denotes again the surface gradient,
and
$h_\ell\in L^\infty(\Gamma_D)$ denotes the local mesh-width function restricted
to $\Gamma_D$.
According to shape regularity of $\TT_\ell$, the integration domains in~\eqref{eq:scottzhang:L2(Gamma)}--\eqref{eq:scottzhang:approx:H1(Gamma)} can be replaced by $\gamma$, i.e.~\eqref{eq:scottzhang:L2(Gamma)}--\eqref{eq:scottzhang:approx:H1(Gamma)} hold also globally on $\gamma$.

By standard interpolation arguments applied
to~\eqref{eq:scottzhang:L2(Gamma)}--\eqref{eq:scottzhang:H1(Gamma)},
one obtains stability
\begin{align}\label{eq:scottzhang:H12(Gamma)}
 \norm{(1-\sz_\ell) w}{H^{1/2}(\gamma)}
 \le \c{scottzhang}\,\norm{w}{H^{1/2}(\gamma)}
 \quad\text{for all }w\in H^{1/2}(\gamma)
\end{align}
in the trace norm. Moreover, it is proved in~\cite[Theorem~3]{kop} that the Scott-Zhang projection satisfies
\begin{align}\label{eq:scottzhang:kp}
 \norm{(1-\sz_\ell) w}{H^{1/2}(\gamma)}
 \le \c{scottzhang}\,\min_{W_\ell\in\SS^p(\TT_\ell|_\gamma)}\norm{h_\ell^{1/2}\nabla_\Gamma (w-W_\ell)}{L^2(\gamma)}
 \quad\text{for all }w\in H^1(\gamma).
\end{align}
Throughout, the constant $\c{scottzhang}>0$ then depends only on shape regularity of $\TT_\ell$ and on $\gamma\in\{\Gamma,\Gamma_D,\Gamma_N\}$.

\subsection{Proof of C\'ea lemma (Proposition~\ref{prop:cea})}
According to weak formulation~\eqref{eq:weakform} and Galerkin
formulation~\eqref{eq:galerkin}, we have the Galerkin orthogonality relation
\begin{align*}
 \dual{\nabla(u-U_\ell)}{\nabla V_\ell}_\Omega = 0
 \quad\text{for all }V_\ell\in\SS^p_D(\TT_\ell).
\end{align*}
Let $\LL:H^{1/2}(\Gamma)\to H^1(\Omega)$ be a lifting operator.
Let $\widehat g,\widehat g_\ell\in H^{1/2}(\Gamma)$ denote arbitrary
extensions of $g=u|_{\Gamma_D}$ resp.\ $\P_\ell g = U_\ell|_{\Gamma_D}$.
Note that $(\sz_\ell\LL\sz_\ell\widehat g)|_{\Gamma_D}=(\sz_\ell u)|_{\Gamma_D}$
as well as $(\sz_\ell\LL\sz_\ell\widehat g_\ell)|_{\Gamma_D}=U_\ell|_{\Gamma_D}$.
For arbitrary $V_\ell\in\SS^p_D(\TT_\ell)$, we thus have
$U_\ell - (V_\ell + \sz_\ell\LL\sz_\ell\widehat g_\ell)\in\SS^p_D(\TT_\ell)$, whence
\begin{align*}
 \norm{\nabla(u-U_\ell)}{L^2(\Omega)}^2
 = \dual{\nabla(u-U_\ell)}{\nabla(u-(V_\ell + \sz_\ell\LL\sz_\ell\widehat g_\ell))}_\Omega
\end{align*}
according to the Galerkin orthogonality. Therefore, the Cauchy inequality
proves
\begin{align*}
 \norm{\nabla(u-U_\ell)}{L^2(\Omega)}
 \le\min_{V_\ell\in\SS^p_D(\TT_\ell)}\norm{\nabla(u-(V_\ell + \sz_\ell\LL\sz_\ell\widehat g_\ell))}{L^2(\Omega)}.
\end{align*}
We now plug-in $V_\ell = \sz_\ell u-\sz_\ell\LL\sz_\ell\widehat g\in\SS^p_D(\TT_\ell)$
and use stability of $\sz_\ell$ and $\LL$ to see
\begin{align*}
 \norm{\nabla(u-U_\ell)}{L^2(\Omega)}
 &\le \norm{\nabla(u-\sz_\ell u + \sz_\ell\LL\sz_\ell(\widehat g-\widehat g_\ell))}{L^2(\Omega)}
 \\&
 \lesssim \norm{\nabla(u-\sz_\ell u)}{L^2(\Omega)}
 + \norm{\widehat g-\widehat g_\ell}{H^{1/2}(\Gamma)}.
\end{align*}
Since the extensions $\widehat g,\widehat g_\ell$ of $g$ and $\P_\ell g$ were arbitrary, we obtain
\begin{align*}
 \norm{\nabla(u-U_\ell)}{L^2(\Omega)}
 &\lesssim \norm{\nabla(u-\sz_\ell u)}{L^2(\Omega)}
 + \norm{(1-\P_\ell)g}{H^{1/2}(\Gamma_D)}.
\end{align*}
According to the projection property $\sz_\ell W_\ell = W_\ell$ for
$W_\ell\in\SS^p(\TT_\ell)$ and $H^1$-stability~\eqref{eq:scottzhangstability},
it holds that
\begin{align*}
 \norm{\nabla(u-\sz_\ell u)}{L^2(\Omega)}
 = \min_{W_\ell\in\SS^p(\TT_\ell)}
 \norm{\nabla(1-\sz_\ell)(u-W_\ell)}{L^2(\Omega)}
 \lesssim \min_{W_\ell\in\SS^p(\TT_\ell)}\norm{\nabla(u-W_\ell)}{L^2(\Omega)}.
\end{align*}
The same argument for $\P_\ell$ with stability on $H^{1/2}(\Gamma)$
gives
\begin{align*}
 \norm{(1-\P_\ell)g}{H^{1/2}(\Gamma_D)}
 = \min_{W_\ell\in\SS^p(\TT_\ell)} \norm{(1-\P_\ell)(g-W_\ell|_{\Gamma_D})}{H^{1/2}(\Gamma_D)}
 \lesssim \min_{W_\ell\in\SS^p(\TT_\ell)}
 \norm{g-W_\ell|_{\Gamma_D}}{H^{1/2}(\Gamma_D)}.
\end{align*}
Combining the last three estimates, we infer
\begin{align*}
 \norm{\nabla(u-U_\ell)}{L^2(\Omega)}
 + \norm{(1-\P_\ell)g}{H^{1/2}(\Gamma_D)}
 \lesssim \min_{W_\ell\in\SS^p(\TT_\ell)}\big(
 \norm{\nabla(u-W_\ell)}{L^2(\Omega)}
 + \norm{g-W_\ell|_{\Gamma_D}}{H^{1/2}(\Gamma_D)}
 \big).
\end{align*}
Finally, the Rellich compactness theorem implies norm equivalence
$\norm{\cdot}{H^1(\Omega)}\simeq\norm{\nabla(\cdot)}{L^2(\Omega)}
 + \norm{(\cdot)|_{\Gamma_D}}{H^{1/2}(\Gamma_D)}$ on $H^1(\Omega)$.
This concludes the proof.\qed

\subsection{Scott-Zhang projection and Dirichlet data oscillations}
We stress that the newest vertex bisection algorithm guarantees that only
finitely many shapes of elements $T\in\set{T\in\TT_\star}{\TT_\star\in\T}$ can occur.
In particular, only finitely many shapes of patches occur. Further details are found in~\cite[Chapter~4]{verfuerth}
as well as in~\cite{stevenson:nvb,traxler}.
This observation will be used in the proof of the following lemma.

\begin{proposition}\label{lemma:oscD}%
Let $\Pi_\ell:L^2(\Gamma_D)\to\PP^{p-1}(\EE_\ell^D)$ denote the $L^2(\Gamma_D)$-projection.
Then,
\begin{align}\label{eq:dirichlet2}
 \norm{(1\!-\!\Pi_\ell)\nabla_\Gamma g}{L^2(E)}
 \le \norm{\nabla_\Gamma(1\!-\!\sz_\ell)g}{L^2(E)}
 \le \c{dirichlet0}\norm{(1\!-\!\Pi_\ell)\nabla_\Gamma g}{L^2(\omega_{\ell,E}^\Gamma)}
 \text{ for all }E\in\EE_\ell^D
\end{align}
and, in particular,
\begin{align}\label{eq:dirichlet}
 \oscD{\ell} \le \norm{h_\ell^{1/2}\nabla_\Gamma(1-\sz_\ell)g}{L^2(\Gamma_D)}
 \le \c{dirichlet0}\,\oscD{\ell}
\end{align}
The constant $\setc{dirichlet0}\ge1$ depends only on $\Gamma_D$, the polynomial degree $p$, the initial triangulation $\TT_0$, and the use of newest
vertex bisection to obtain $\TT_\ell\in\T$, but \emph{not} on $g$.
\end{proposition}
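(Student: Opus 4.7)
The lower bound in~\eqref{eq:dirichlet2} is essentially free. Since $\sz_\ell g\in\SS^p(\EE_\ell^D)$, the surface gradient $\nabla_\Gamma(\sz_\ell g)|_E$ is a polynomial of degree at most $p-1$ on $E$, hence a competitor in the best-approximation problem defining $\Pi_\ell$. Consequently,
\[
 \|(1-\Pi_\ell)\nabla_\Gamma g\|_{L^2(E)}
 \le \|\nabla_\Gamma g - \nabla_\Gamma\sz_\ell g\|_{L^2(E)}
 = \|\nabla_\Gamma(1-\sz_\ell)g\|_{L^2(E)}.
\]

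For the upper bound in~\eqref{eq:dirichlet2}, the plan is the standard scaling-plus-compactness (Deny--Lions / Bramble--Hilbert) argument. Since newest vertex bisection (together with the fixed initial mesh $\TT_0$) produces only finitely many congruence classes of facet patches $\omega_{\ell,E}^\Gamma$ up to affine transformation, it suffices to fix a reference facet $\hat E$ inside a reference patch $\hat\omega$ and prove the scale-invariant inequality
\[
 \|\nabla_\Gamma(1-\hat\sz) v\|_{L^2(\hat E)}
 \le \hat C\,\|(1-\hat\Pi)\nabla_\Gamma v\|_{L^2(\hat\omega)}
 \quad\text{for all }v\in H^1(\hat\omega),
\]
with a constant $\hat C$ depending only on the reference patch. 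The two sides scale with the same power of $\diam(E)$, so this implies the local estimate after pull-back. Both sides are continuous seminorms on $H^1(\hat\omega)$, so by Rellich compactness the estimate will follow as soon as the two seminorms share the same kernel.

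Identifying this common kernel is the main technical step. The right-hand side vanishes iff $\nabla_\Gamma v$ is piecewise in $\PP^{p-1}$ on $\hat\omega$; because $v\in H^1(\hat\omega)$ matches traces across shared sub-facets, this forces $v$ to be a \emph{continuous} piecewise polynomial of degree $\le p$ on $\hat\omega$. For such $v$ the local construction of the Scott--Zhang operator---whose value on $\hat E$ depends only on $v|_{\hat\omega}$ and whose $L^2$-dual basis functionals reproduce polynomial nodal values on each single facet---gives $\hat\sz v|_{\hat E}=v|_{\hat E}$, so the left-hand side vanishes as well. The hardest bookkeeping here is verifying this reproduction property: the projection property \emph{globally} on $\SS^p(\TT_\ell)$ must be used \emph{locally}, exploiting that the dual basis used to define $\sz_\ell$ on a boundary node only tests against values on a single facet. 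Once the kernels coincide, a standard quotient/compactness argument yields $\hat C$. Finally, the global bound~\eqref{eq:dirichlet} follows by multiplying the local inequalities~\eqref{eq:dirichlet2} by $|T|^{1/d}\simeq h_\ell|_E$, summing over $E\in\EE_\ell^D$, and using that the overlap of the patches $\omega_{\ell,E}^\Gamma$ is uniformly bounded by shape regularity.
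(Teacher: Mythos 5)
Your proposal is correct and follows essentially the same route as the paper: the lower bound via the best-approximation property of $\Pi_\ell$, and the upper bound via a compactness argument on the finitely many patch shapes produced by newest vertex bisection, combined with a scaling argument and the local reproduction (projection plus locality) property of the Scott--Zhang operator, exactly as you describe. The paper merely packages the compactness step as an explicitly normalized contradiction sequence (Bolzano--Weierstrass in the finite-dimensional range of $\Pi_\ell$, Friedrichs, Rellich) rather than your common-kernel quotient formulation, which amounts to the same argument.
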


\begin{proof}%
\def\Q{\mathbb Q}%
Since $\Pi_\ell$ is the piecewise $L^2$-projection, the lower bound
in~\eqref{eq:dirichlet2}--\eqref{eq:dirichlet} is obvious. To verify the upper bound, we argue by
contradiction and assume that the upper bound in~\eqref{eq:dirichlet2} is wrong
for each constant $C>0$. For $n\in\N$, we thus find some
$\widetilde g_n\in H^1(\Gamma)$ such that
\begin{align}\label{eq:dirichlet2prime}
 \norm{\nabla_\Gamma(1-\sz_\ell)\widetilde g_n}{L^2(E)}
 > n\,\norm{(1-\Pi_\ell)\nabla_\Gamma \widetilde g_n}{L^2(\omega_{\ell,E}^\Gamma)}.
\end{align}
Let $\Q_{\ell,E}:H^1(\omega_{\ell,E}^\Gamma)\to\SS^p(\EE_\ell|_{\omega_{\ell,E}^\Gamma})$
denote the $H^1$-orthogonal projection on the patch $\omega_{\ell,E}^\Gamma$ and define
$\overline g_n = (1-\Q_{\ell,E})\widetilde g_n$. Since the value of $\sz_\ell v$ on $E$
depends only on the values of $v$ on $\omega_{\ell,E}^\Gamma$, the projection property
of $\sz_\ell$ reveals $(1-\sz_\ell)\Q_{\ell,E}\widetilde g_n=0$ on $E$. Moreover,
$\nabla_\Gamma\Q_{\ell,E}\widetilde g_n\in\PP^{p-1}(\EE_\ell|_{\omega_{\ell,E}^\Gamma})$
so that $(1-\Pi_\ell)\nabla_\Gamma\Q_{\ell,E}\widetilde g_n=0$ on $\omega_{\ell,E}^\Gamma$. From the orthogonal
decomposition $\widetilde g_n = \Q_{\ell,E}\widetilde g_n + \overline g_n$, we thus
see
$\norm{\nabla_\Gamma(1-\sz_\ell)\overline g_n}{L^2(E)}
 = \norm{\nabla_\Gamma(1-\sz_\ell)\widetilde g_n}{L^2(E)}$
and
$\norm{(1-\Pi_\ell)\nabla_\Gamma \overline g_n}{L^2(\omega_{\ell,E}^\Gamma)}
 = \norm{(1-\Pi_\ell)\nabla_\Gamma \widetilde g_n}{L^2(\omega_{\ell,E}^\Gamma)}$.
In particular, we observe $\overline g_n\neq0$ from~\eqref{eq:dirichlet2prime} so that we may define
$g_n:=\overline g_n/\norm{\overline g_n}{H^1(\omega_{\ell,E}^\Gamma)}$. This definition
guarantees
\begin{align}\label{eq1:dirichlet}
 \norm{g_n}{H^1(\omega_{\ell,E}^\Gamma)}=1
 \quad\text{and}\quad
 g_n\in\SS^p(\EE_\ell|_{\omega_{\ell,E}^\Gamma})^\perp,
\end{align}
where orthogonality is understood with respect to the $H^1(\omega_{\ell,E}^\Gamma)$-scalar
product. Moreover, it holds that
\begin{align}\label{eq2:dirichlet}
 \norm{(1-\Pi_\ell)\nabla_\Gamma g_n}{L^2(\omega_{\ell,E}^\Gamma)}
 <\frac{1}{n}\,\norm{\nabla_\Gamma(1-\sz_\ell) g_n}{L^2(E)}
 \lesssim \frac{1}{n}\,\norm{\nabla_\Gamma g_n}{L^2(\omega_{\ell,E}^\Gamma)}
 \xrightarrow{n\to\infty}0
\end{align}
due to the construction of $g_n$ and local $H^1$-stability of $\sz_\ell:H^1(\Gamma_D)\to H^1(\Gamma_D)$.

First,~\eqref{eq2:dirichlet} implies that
$\norm{\Pi_\ell\nabla_\Gamma g_n}{L^2(\omega_{\ell,E}^\Gamma)}\le C<\infty$ is uniformly
bounded as $n\to\infty$. Since $\Pi_\ell\nabla_\Gamma g_n\in\PP^{p-1}(\EE_\ell|_{\omega_{\ell,E}^\Gamma})$ belongs to a finite dimensional space, we may apply the
Bolzano-Weierstrass theorem to extract a convergent subsequence. Without loss
of generality, we may thus assume
\begin{align}\label{eq3:dirichlet}
 \Pi_\ell\nabla_\Gamma g_n
 \xrightarrow{n\to\infty}\Phi_\ell\in \PP^{p-1}(\EE_\ell|_{\omega_{\ell,E}^\Gamma})
 \quad\text{in strong $L^2$-sense}.
\end{align}

Second, this and~\eqref{eq2:dirichlet} prove $L^2$-convergence of $\nabla_\Gamma g_n$ to $\Phi_\ell$,
\begin{align}\label{eq4:dirichlet}
 \norm{\nabla_\Gamma g_n-\Phi_\ell}{L^2(\omega_{\ell,E}^\Gamma)}
 \le \norm{(1-\Pi_\ell)\nabla_\Gamma g_n}{L^2(\omega_{\ell,E}^\Gamma)}
 + \norm{\Pi_\ell\nabla_\Gamma g_n-\Phi_\ell}{L^2(\omega_{\ell,E}^\Gamma)}
 \xrightarrow{n\to\infty}0.
\end{align}

Third, orthogonality~\eqref{eq1:dirichlet} implies $\int_{\omega_{\ell,E}^\Gamma}g_n\,d\Gamma=0$ if we consider the constant function $1\in \SS^p(\EE_\ell|_{\omega_{\ell,E}^\Gamma})$. Therefore, the Friedrichs inequality and~\eqref{eq4:dirichlet} predict
uniform boundedness $\norm{g_n}{H^1(\omega_{\ell,E}^\Gamma)}\lesssim\norm{\nabla_\Gamma g_n}{L^2(\omega_{\ell,E}^\Gamma)}\le C<\infty$ as $n\to\infty$. According to weak compactness
in Hilbert spaces, we may thus extract a weakly convergent subsequence.
Without loss of generality, we may thus assume
\begin{align}\label{eq5:dirichlet}
 g_n \xrightarrow{n\to\infty} g_\infty\in H^1(\omega_{\ell,E}^\Gamma)
 \quad\text{in weak $H^1$-sense}.
\end{align}

Fourth, the combination of~\eqref{eq4:dirichlet} and~\eqref{eq5:dirichlet}
implies $\nabla_\Gamma g_\infty=\Phi_\ell$.
This follows from the fact that
$\norm{\Phi_\ell - \nabla_\Gamma(\cdot)}{L^2(\omega_{\ell,E}^\Gamma)}$
is convex and continuous, whence weakly lower semicontinuous on $H^1(\omega_{\ell,E}^\Gamma)$,
i.e.\ $\norm{\Phi_\ell - \nabla_\Gamma g_\infty}{L^2(\omega_{\ell,E}^\Gamma)}
\le\liminf_n\norm{\Phi_\ell - \nabla_\Gamma g_n}{L^2(\omega_{\ell,E}^\Gamma)}
= 0$.

Fifth, the Rellich compactness theorem proves that the convergence
in~\eqref{eq5:dirichlet} does also hold in strong $L^2$-sense. Together
with~\eqref{eq4:dirichlet} and $\Phi_\ell = \nabla_\Gamma g_\infty$, we now observe strong $H^1$-convergence
\begin{align*}
 \norm{g_\infty-g_n}{H^1(\omega_{\ell,E}^\Gamma)}^2
 = \norm{g_\infty-g_n}{L^2(\omega_{\ell,E}^\Gamma)}^2
 + \norm{\Phi_\ell-\nabla_\Gamma g_n}{L^2(\omega_{\ell,E}^\Gamma)}^2
 \xrightarrow{n\to\infty}0,
\end{align*}
whence $\norm{g_\infty}{H^1(\omega_{\ell,E}^\Gamma)}=1$ as well as $g_\infty\in\SS^p(\EE_\ell|_{\omega_{\ell,E}^\Gamma})^\perp$ according
to~\eqref{eq1:dirichlet}.

On the other hand,
$\nabla_\Gamma g_\infty = \Phi_\ell\in\PP^{p-1}(\EE_\ell|_{\omega_{\ell,E}^\Gamma})$ implies
$g_\infty\in\SS^p(\EE_\ell|_{\omega_{\ell,E}^\Gamma})$. This yields
$g_\infty\in\SS^p(\EE_\ell|_{\omega_{\ell,E}^\Gamma})\cap\SS^p(\EE_\ell|_{\omega_{\ell,E}^\Gamma})^\perp=\{0\}$ and contradicts $\norm{g_\infty}{H^1(\omega_{\ell,E}^\Gamma)}^2=1$.

This contradiction proves the upper bound in~\eqref{eq:dirichlet2}. A standard
scaling argument verifies that the constant $\c{dirichlet0}>0$ does only
depend on the shape of $\omega_{\ell,E}^\Gamma$ but not on the diameter. As stated
above, newest vertex bisection guarantees that only finitely many shapes of patches
$\omega_{\ell,E}^\Gamma$ may occur, i.e.\ $\c{dirichlet0}>0$ depends only on $\TT_0$ and
the use of newest vertex bisection. Summing~\eqref{eq:dirichlet2} over all
Dirichlet facets, we see
\begin{align*}
  \oscD{\ell}^2
 = \norm{h_\ell^{1/2}(1-\Pi_\ell)\nabla_\Gamma g}{L^2(\Gamma_D)}^2
 &\le \norm{h_\ell^{1/2}\nabla_\Gamma(1-\sz_\ell)g}{L^2(\Gamma_D)}^2
\\
 &\lesssim\sum_{E\in\EE_\ell^D}h_\ell|_E\,
 \norm{(1-\Pi_\ell)\nabla_\Gamma g}{L^2(\omega_{\ell,E}^\Gamma)}^2\\
 &\lesssim \norm{h_\ell^{1/2}(1-\Pi_\ell)\nabla_\Gamma g}{L^2(\Gamma_D)}^2,
\end{align*}
where the final estimate holds due to uniform shape regularity.
\end{proof}

\begin{corollary}\label{cor:oscD}
It holds $\norm{(1-\P_\ell)g}{H^{1/2}(\Gamma_D)} \le \c{dirichlet}\,\oscD\ell$, where $\c{dirichlet}>0$ depends on $\Gamma_D$, the polynomial degree $p\ge1$, stability $\setc{stability}>0$, the initial mesh $\TT_0$, and the use of newest vertex bisection.
\end{corollary}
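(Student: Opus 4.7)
The plan is to exploit the fact that $\P_\ell$ is a projection onto the discrete trace space, so that the error $(1-\P_\ell)g$ can be estimated via \emph{any} discrete comparison function; choosing that comparison function to be the Scott-Zhang interpolant of $g$ will allow the use of Proposition~\ref{lemma:oscD} to bound everything in terms of $\oscD\ell$.

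First, since $\sz_\ell g \in \SS^p(\EE_\ell^D)$ and $\P_\ell$ is a projection onto this space, we have $\P_\ell \sz_\ell g = \sz_\ell g$, hence $(1-\P_\ell)g = (1-\P_\ell)(g-\sz_\ell g)$. Applying the uniform $H^{1/2}(\Gamma_D)$-stability~\eqref{eq:stability} of $\P_\ell$ together with the triangle inequality gives
\begin{align*}
\norm{(1-\P_\ell)g}{H^{1/2}(\Gamma_D)}
\le (1+\c{stability})\,\norm{(1-\sz_\ell)g}{H^{1/2}(\Gamma_D)}.
\end{align*}

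Second, the Scott-Zhang quasi-interpolation estimate~\eqref{eq:scottzhang:kp}, specialized to $\gamma=\Gamma_D$ and $w=g \in H^1(\Gamma_D)$, with the discrete comparison $W_\ell := \sz_\ell g \in \SS^p(\EE_\ell^D)$ inserted into the minimum, yields
\begin{align*}
\norm{(1-\sz_\ell)g}{H^{1/2}(\Gamma_D)}
\le \c{scottzhang}\,\norm{h_\ell^{1/2}\nabla_\Gamma(1-\sz_\ell)g}{L^2(\Gamma_D)}.
\end{align*}

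Third, Proposition~\ref{lemma:oscD}, specifically the right-hand estimate in~\eqref{eq:dirichlet}, provides the bound
\begin{align*}
\norm{h_\ell^{1/2}\nabla_\Gamma(1-\sz_\ell)g}{L^2(\Gamma_D)}
\le \c{dirichlet0}\,\oscD\ell.
\end{align*}
Concatenating these three inequalities proves the claim with $\c{dirichlet} := (1+\c{stability})\c{scottzhang}\c{dirichlet0}$, whose dependencies match those listed in the statement. No step here should be an obstacle; the entire work has been packaged into Proposition~\ref{lemma:oscD} and the Scott-Zhang bound~\eqref{eq:scottzhang:kp}, and the only new ingredient is the elementary ``projection trick'' that replaces $g$ by $g-\sz_\ell g$ before applying the stability of $\P_\ell$. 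This is the key mechanism by which an $H^{1/2}$-stable projection inherits, through the Scott-Zhang projection as intermediary, the local approximation quality measured by $\oscD\ell$.
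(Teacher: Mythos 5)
Your argument is correct and follows essentially the same route as the paper: the identity $(1-\P_\ell)g=(1-\P_\ell)(1-\sz_\ell)g$ via the projection property, then $H^{1/2}$-stability of $\P_\ell$, the Scott-Zhang bound~\eqref{eq:scottzhang:kp} (with $W_\ell=\sz_\ell g$ in the minimum), and finally Proposition~\ref{lemma:oscD} to pass to $\oscD\ell$. The explicit constant $(1+\c{stability})\c{scottzhang}\c{dirichlet0}$ and its stated dependencies are consistent with the paper's hidden constants.
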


\begin{proof}
By use of the projection property and stability of $\P_\ell$, one sees $\norm{(1-\P_\ell)g}{H^{1/2}(\Gamma_D)}
=\norm{(1-\P_\ell)(1-\sz_\ell)g}{H^{1/2}(\Gamma_D)}
\lesssim \norm{(1-\sz_\ell)g}{H^{1/2}(\Gamma_D)}$. The approximation estimate~\eqref{eq:scottzhang:kp} and Proposition~\ref{lemma:oscD} conclude
$\norm{(1-\sz_\ell)g}{H^{1/2}(\Gamma_D)}\lesssim\norm{h_\ell^{1/2}\nabla_\Gamma(1-\sz_\ell)g}{L^2(\Gamma_D)}
\simeq\oscD\ell$.
\end{proof}

\subsection{Proof of reliability and efficiency (Proposition~\ref{prop:estimator})}
We consider a continuous auxiliary problem
\begin{align}
\begin{split}
 -\Delta w &= 0 \hspace*{18.6mm} \text{in } \Omega,\\
 w &= (1-\P_\ell)g \quad \text{on } \Gamma_D,\\
 \partial_n w &= 0 \hspace*{18.6mm} \text{on } \Gamma_N,
\end{split}
\end{align}
with unique solution $w\in H^1(\Omega)$. We then have norm equivalence $\norm{w}{H^1(\Omega)}\simeq\norm{(1-\P_\ell)g}{H^{1/2}(\Gamma_D)}$ as well as $u-U_\ell-w\in H^1_D(\Omega)$. From this, we obtain
\begin{align*}
 \norm{u-U_\ell}{H^1(\Omega)}
 \lesssim \norm{\nabla(u-U_\ell-w)}{L^2(\Omega)}
 + \norm{(1-\P_\ell)g}{H^{1/2}(\Gamma_D)}.
\end{align*}
The first term on the right-hand side can be handled as for homogeneous
Dirichlet data, i.e.\ use of the Galerkin orthogonality combined with
approximation estimates for a Cl\'ement-type quasi-interpolation operator
(e.g.\ the Scott-Zhang projection). This leads to
\begin{align*}
 &\norm{\nabla(u-U_\ell-w)}{L^2(\Omega)}
 \lesssim \varrho_\ell
\end{align*}
Details are found e.g.\ in~\cite{bcd}. The $H^{1/2}(\Gamma_D)$-norm is
dominated by the Dirichlet data oscillations $\oscD\ell$, see Corollary~\ref{cor:oscD}.

By use of bubble functions and local scaling arguments, one obtains the estimates
\begin{align*}
 |T|^{2/d}\,\norm{f+\Delta U_\ell}{L^2(T)}^2
 &\lesssim \norm{\nabla(u-U_\ell)}{L^2(T)}^2 + \oscT{\ell}(T)^2+\oscN{\ell}(\partial T\cap \Gamma_N),\\
 |T|^{1/d}\,\norm{[\partial_nU_\ell]}{L^2(E\cap\Omega)}^2
 &\lesssim \norm{\nabla(u-U_\ell)}{L^2(\Omega_{\ell,E})}^2 + \oscT{\ell}(\omega_{\ell,E})^2,\\
 |T|^{1/d}\,\norm{\phi-\partial_nU_\ell}{L^2(E\cap\Gamma_N)}^2
 &\lesssim \norm{\nabla(u-U_\ell)}{L^2(\Omega_{\ell,E})}^2 + \oscT{\ell}(\omega_{\ell,E})^2+\oscN{\ell}(E\cap\Gamma_N)^2,
\end{align*}
where $\Omega_{\ell,E}=T^+\cup T^-$ denotes the facet patch of
$T_+\cap T_-=E\in\EE_\ell$.
Details are found e.g.\ in~\cite{ao,verfuerth}.
Summing these estimates over all elements, one obtains the efficiency estimate~\eqref{eq:rho:efficient}.
\qed

\section{Convergence}
\label{section:convergence}%

\noindent%
In this section, we aim to prove Theorem~\ref{theorem:convergence}.
Our proof of the convergence theorem relies on the estimator reduction principle from~\cite{afp}, i.e.\ we verify that the error estimator is contractive up to some zero sequence.

\subsection{Estimator reduction estimate}
Note that the estimator $\eta_\ell^2 = \varrho_\ell^2 + \oscD\ell^2$ can be localized over elements via
\begin{align}\label{eq:eta:local}
 \eta_\ell^2 = \sum_{T\in\TT_\ell}\eta_\ell(T)^2
 \quad\text{with}\quad
 \eta_\ell(T)^2 = \varrho_\ell(T)^2 + |T|^{1/d}\,\norm{(1-\Pi_\ell)\nabla_\Gamma g}{L^2(\partial T\cap\Gamma_D)}^2
\end{align}
with $\Pi_\ell:L^2(\Gamma_D)\to\PP^{p-1}(\EE_\ell^D)$
the (even $\EE_\ell^D$-piecewise) $L^2(\Gamma_D)$-orthogonal projection.

\begin{lemma}[modified marking implies D\"orfler marking]
\label{lemma:doerfler1}%
For $0<\theta_1,\theta_2,\vartheta<1$ in Algorithm~\ref{algorithm2}, there is some parameter $0<\theta<1$ such that the error estimator $\eta_\ell^2 = \varrho_\ell^2 + \oscD\ell^2$ satisfies
\begin{align}\label{eq:doerfler}
 \theta\,\eta_\ell^2
 \le \sum_{T\in\MM_\ell}\eta_\ell(T)^2,
\end{align}
and all elements $T\in\MM_\ell$ are refined by at least one bisection.
\end{lemma}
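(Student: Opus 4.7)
The plan is to split the proof according to which branch of the algorithm produced $\MM_\ell$ and to derive an explicit value of $\theta$ in each case, taking the minimum at the end.

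First I would recall that, by construction of the local estimator in~\eqref{eq:eta:local} together with the definition of $\oscD\ell(E)^2$ in~\eqref{eq:osc:dir}, one has the identity
\begin{align*}
 \eta_\ell(T)^2
 = \varrho_\ell(T)^2 + \sum_{\substack{E\in\EE_\ell^D\\ E\subseteq\partial T}} \oscD\ell(E)^2,
\end{align*}
since the $L^2$-projection $\Pi_\ell$ on $\Gamma_D$ coincides facetwise with $\Pi_E$ and each Dirichlet facet $E\in\EE_\ell^D$ belongs to exactly one element. In particular $\eta_\ell(T)\ge\varrho_\ell(T)$, and for any subset $\mathcal{F}\subseteq\EE_\ell^D$ one has $\sum_{E\in\mathcal{F}}\oscD\ell(E)^2\le\sum_{T\in\MM_\ell}\eta_\ell(T)^2$ whenever $\MM_\ell\supseteq\set{T\in\TT_\ell}{\exists E\in\mathcal{F},\,E\subseteq\partial T}$.

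In the first case, where $\oscD\ell^2\le\vartheta\,\varrho_\ell^2$, the trivial bound $\eta_\ell^2\le(1+\vartheta)\varrho_\ell^2$ combined with the marking property~\eqref{eq:doerfler:res} and $\eta_\ell(T)\ge\varrho_\ell(T)$ gives
\begin{align*}
 \sum_{T\in\MM_\ell}\eta_\ell(T)^2
 \ge \sum_{T\in\MM_\ell}\varrho_\ell(T)^2
 \ge \theta_1\,\varrho_\ell^2
 \ge \frac{\theta_1}{1+\vartheta}\,\eta_\ell^2.
\end{align*}
In the second case, where $\oscD\ell^2>\vartheta\,\varrho_\ell^2$, one estimates $\eta_\ell^2\le\frac{1+\vartheta}{\vartheta}\,\oscD\ell^2$, and by the aforementioned edge-to-element observation together with the marking property~\eqref{eq:doerfler:osc},
\begin{align*}
 \sum_{T\in\MM_\ell}\eta_\ell(T)^2
 \ge \sum_{E\in\MM_\ell^D}\oscD\ell(E)^2
 \ge \theta_2\,\oscD\ell^2
 \ge \frac{\theta_2\,\vartheta}{1+\vartheta}\,\eta_\ell^2.
\end{align*}

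Thus it suffices to set $\theta := \min\{\theta_1,\theta_2\vartheta\}/(1+\vartheta)\in(0,1)$, which establishes~\eqref{eq:doerfler} in both branches. Finally, the additional claim that every $T\in\MM_\ell$ is refined by at least one bisection is immediate from the specification of $\refine$ recalled in Section~2.4. There is no real obstacle here; the only point one needs to be mildly careful about is to identify the facetwise projection $\Pi_E$ with $\Pi_\ell|_E$ so that the element-localized Dirichlet oscillations in~\eqref{eq:eta:local} can be summed against the edge-based Dörfler criterion~\eqref{eq:doerfler:osc}.
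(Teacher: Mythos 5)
Your proof is correct and follows essentially the same two-case argument as the paper, with the same resulting thresholds $\theta_1/(1+\vartheta)$ and $\theta_2\vartheta/(1+\vartheta)=\theta_2(1+\vartheta^{-1})^{-1}$; the facet-to-element bookkeeping you spell out is used implicitly in the paper's proof as well.
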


\begin{proof}
First, assume $\oscD\ell^2 \le \vartheta\,\varrho_\ell^2$ and let $\MM_\ell\subseteq\TT_\ell$ satisfy~\eqref{eq:doerfler:res}. Then,
\begin{align*}
 \theta_1(\varrho_\ell^2+ \oscD\ell^2)
 \le \theta_1(1+\vartheta)\varrho_\ell^2
 \le (1+\vartheta)\sum_{T\in\MM_\ell}\varrho_\ell(T)^2.
\end{align*}
Therefore, the D\"orfler marking~\eqref{eq:doerfler} holds with
$\theta\le \theta_1(1+\vartheta)^{-1}$.

Second, assume $\oscD\ell^2 > \vartheta\,\varrho_\ell^2$ and let $\MM_\ell^D\subseteq\EE_\ell^D$ satisfy~\eqref{eq:doerfler:osc}.
Then,
\begin{align*}
 \theta_2(\varrho_\ell^2+ \oscD\ell^2)
 \le \theta_2(1+\vartheta^{-1})\oscD\ell^2
 \le (1+\vartheta^{-1})\sum_{E\in\MM_\ell^D}\oscD\ell(E)^2.
\end{align*}
Therefore, the D\"orfler marking~\eqref{eq:doerfler} holds with
$\theta \le \theta_2(1+\vartheta^{-1})^{-1}$, and all elements which
have some facet $E\in\MM_\ell^D$ are refined.
\end{proof}

\begin{proposition}[estimator reduction]
\label{prop:reduction}%
Let $\TT_\star = \refine(\TT_\ell)$ be an arbitrary refinement of $\TT_\ell$
and $\MM_\ell\subseteq\TT_\ell\backslash\TT_\star$ a subset of the refined elements
which satisfies the D\"orfler marking~\eqref{eq:doerfler} for some $0<\theta<1$.
Then,
\begin{align}\label{eq:reduction}
 \eta_\star^2
 \le \q{reduction}\,\eta_\ell^2
 + \c{reduction}\,\norm{\nabla(U_{\ell+1}-U_\ell)}{L^2(\Omega)}^2
\end{align}%
with certain constants $0<\q{reduction}<1$ and $\c{reduction}>0$ which depend only on the parameter $0<\theta<1$, shape regularity of $\TT_\star$, and the polynomial degree $p\ge1$.
\end{proposition}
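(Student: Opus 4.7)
\medskip
\noindent
\textbf{Proof plan for Proposition~\ref{prop:reduction}.}
The plan is to use the element-wise localization~\eqref{eq:eta:local} of $\eta_\ell^2$ and $\eta_\star^2$, compare the indicators of $\TT_\star$ against those of $\TT_\ell$ via a perturbation argument, and finally exploit the D\"orfler property~\eqref{eq:doerfler} together with the bisection-induced shrinkage of the mesh-size factors. First, I would split every element $T'\in\TT_\star$ into either (a) $T'=T\in\TT_\ell\setminus\MM_\ell$ is unrefined, in which case $|T'|=|T|$ and $T'$ is not a descendant of a marked element, or (b) $T'\subsetneq T$ for some $T\in\MM_\ell$, in which case bisection guarantees $|T'|\le |T|/2$, i.e.\ $|T'|^{1/d}\le q\,|T|^{1/d}$ with $q:=2^{-1/d}<1$.

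Second, I would control $\eta_\star(T')^2$ in terms of $\eta_\ell(T)^2$ plus a perturbation by $\|\nabla(U_\star-U_\ell)\|_{L^2}^2$. For each of the four constituents of $\eta_\star(T')^2$ (volume residual, interior jump, Neumann residual, Dirichlet oscillation) I write $f+\Delta U_\star=(f+\Delta U_\ell)+\Delta(U_\ell-U_\star)$, $[\partial_n U_\star]=[\partial_n U_\ell]+[\partial_n(U_\star-U_\ell)]$, and analogously for the Neumann contribution, then apply Young's inequality $(a+b)^2\le (1+\delta)a^2+(1+\delta^{-1})b^2$ together with the standard inverse estimates $|T'|^{2/d}\|\Delta W\|_{L^2(T')}^2+|T'|^{1/d}\|[\partial_n W]\|_{L^2(\partial T')}^2\lesssim \|\nabla W\|_{L^2(\omega_{T'})}^2$ for discrete $W$. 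For the Dirichlet oscillation I use that $\PP^{p-1}(\EE_\star^D)\supseteq\PP^{p-1}(\EE_\ell^D)$ (since $\TT_\star$ refines $\TT_\ell$ and newest vertex bisection preserves the Dirichlet partition), hence facetwise $\|(1-\Pi_\star)\nabla_\Gamma g\|_{L^2(E')}\le \|(1-\Pi_\ell)\nabla_\Gamma g\|_{L^2(E')}$; combined with $|T'|^{1/d}\le|T|^{1/d}$ (and the gain factor $q$ on refined facets) this term behaves exactly as the volume and jump contributions without any $U_\star-U_\ell$ perturbation.

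Third, summing these local bounds gives
\begin{align*}
\eta_\star^2
&\le (1+\delta)\Big(\sum_{T\in\TT_\ell\setminus\MM_\ell}\eta_\ell(T)^2
+q^{\,2}\sum_{T\in\MM_\ell}\eta_\ell(T)^2\Big)
+ C\,(1+\delta^{-1})\,\|\nabla(U_\star-U_\ell)\|_{L^2(\Omega)}^2\\
&= (1+\delta)\Big(\eta_\ell^2-(1-q^{\,2})\sum_{T\in\MM_\ell}\eta_\ell(T)^2\Big)
+ C\,(1+\delta^{-1})\,\|\nabla(U_\star-U_\ell)\|_{L^2(\Omega)}^2.
\end{align*}
Now the D\"orfler marking~\eqref{eq:doerfler} yields $\sum_{T\in\MM_\ell}\eta_\ell(T)^2\ge\theta\,\eta_\ell^2$, so
\begin{align*}
\eta_\star^2\le (1+\delta)\bigl(1-(1-q^{\,2})\theta\bigr)\eta_\ell^2+C\,(1+\delta^{-1})\,\|\nabla(U_\star-U_\ell)\|_{L^2(\Omega)}^2.
\end{align*}
Choosing $\delta>0$ sufficiently small gives $\q{reduction}:=(1+\delta)(1-(1-q^{\,2})\theta)<1$ and $\c{reduction}:=C\,(1+\delta^{-1})$, which is the assertion.

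The main obstacle I anticipate is the book-keeping for the Dirichlet oscillation term: one needs both the monotonicity of the projection error (gained from $\PP^{p-1}(\EE_\ell^D)\subseteq\PP^{p-1}(\EE_\star^D)$) and the facet-wise shrinkage of $|T|^{1/d}$ on refined Dirichlet facets, and one must verify that the refinement $\TT_\star\to\TT_\ell$ cannot accidentally coarsen the Dirichlet facet partition; this follows from newest vertex bisection together with the assumption that $\Gamma_D$ is resolved by $\TT_0$. The inverse-estimate treatment of the volume and jump residuals is standard and parallels the homogeneous-Dirichlet analysis of~\cite{ckns,afp}, so no new difficulty arises there.
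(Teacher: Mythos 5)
Your proposal is essentially the paper's own proof: triangle plus Young inequality with parameter $\delta$, a scaling/inverse estimate to absorb the discrete perturbation into $\norm{\nabla(U_\star-U_\ell)}{L^2(\Omega)}^2$, the volume-halving of bisected elements for the weighted residual terms, monotonicity of the piecewise $L^2$-projection error for the Dirichlet oscillations, and finally the D\"orfler property with a small choice of $\delta$. The only slip is quantitative: the facet and Dirichlet-oscillation contributions carry the weight $|T|^{1/d}$ and hence only contract by $2^{-1/d}$, not by $q^2=2^{-2/d}$ as written in your summed estimate, so the uniform reduction factor must be $q=2^{-1/d}$ (exactly what the paper uses, giving $\q{reduction}=(1+\delta)\bigl(1-(1-2^{-1/d})\theta\bigr)$); with this replacement your argument is complete.
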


\begin{proof}[Sketch of proof]
For the sake of completeness, we include the idea of the proof of~\eqref{eq:reduction} although our proof is only a minor extension of the proof from~\cite[Cor.\ 3.4, Proof of Theorem 4.1]{ckns},
where all details are found.
First, we employ a triangle inequality and the Young inequality to
see for arbitrary $\delta>0$
\begin{align*}
 \eta_\star^2
 &\le(1+\delta)\,\Big(\sum_{T'\in\TT_\star}
 |T'|^{2/d}\,\norm{f+\Delta U_\ell}{L^2(T')}^2
 +|T'|^{1/d}\,\norm{[\partial_nU_\ell]}{L^2(\partial T'\cap\Omega)}^2
 \\&\qquad\qquad\qquad\qquad
+|T'|^{1/d}\,\norm{\phi-\partial_n U_\ell}{L^2(\partial T'\cap\Gamma_N)}^2
 +|T'|^{1/d}\,\norm{(1-\Pi_\star)\nabla_\Gamma g}{L^2(\partial T'\cap\Gamma_D)}^2
 \Big)
 \\&\quad
 + (1+\delta^{-1})\,\Big(\sum_{T'\in\TT_\star}
  |T'|^{2/d}\norm{\Delta(U_\star-U_\ell)}{L^2(T')}^2
  + |T'|^{1/d}\norm{[\partial_n(U_\star-U_\ell)]}{L^2(\partial T'\cap\Omega)}^2
 \\&\quad\qquad\qquad\qquad\qquad
  + |T'|^{1/d}\norm{\partial_n(U_\star-U_\ell)}{L^2(\partial T'\cap\Gamma_N)}^2
\Big).
\end{align*}
A scaling argument proves that the second bracket is bounded by
$C\,\norm{\nabla(U_\star-U_\ell)}{L^2(\Omega)}^2$, where $C>0$ depends only
on shape regularity of $\TT_\star$. The first
contribution of the first bracket is estimated as follows
\begin{align*}
 \sum_{T'\in\TT_\star}
 |T'|^{2/d}\,\norm{f+\Delta U_\ell}{L^2(T')}^2
 &= \sum_{T'\in\TT_\star\cap\TT_\ell}
 |T'|^{2/d}\,\norm{f+\Delta U_\ell}{L^2(T')}^2
 \\&\quad
 + \sum_{T'\in\TT_\star\backslash\TT_\ell}
 |T'|^{2/d}\,\norm{f+\Delta U_\ell}{L^2(T')}^2
\end{align*}
Since a refined element $T\in\TT_\ell\backslash\TT_\star$ is the
(essentially disjoint) union of its sons $T'\in\TT_\star\backslash\TT_\ell$ and
$|T'|\le|T|/2$, the second sum is estimated by
\begin{align*}
 \sum_{T'\in\TT_\star\backslash\TT_\ell}
 |T'|^{2/d}\,\norm{f+\Delta U_\ell}{L^2(T')}^2
 \le 2^{-2/d} \sum_{T\in\TT_\ell\backslash\TT_\star}
 |T|^{2/d}\,\norm{f+\Delta U_\ell}{L^2(T)}^2.
\end{align*}
This yields
\begin{align*}
 \sum_{T'\in\TT_\star}
 |T'|^{2/d}\,\norm{f+\Delta U_\ell}{L^2(T')}^2
 &\le\sum_{T\in\TT_\ell}|T|^{2/d}\,\norm{f+\Delta U_\ell}{L^2(T)}^2
 \\&\quad
 - (1-2^{-2/d})\,\sum_{T\in\TT_\ell\backslash\TT_{\star}}|T|^{2/d}\,\norm{f+\Delta U_\ell}{L^2(T)}^2,
\end{align*}
where the sums on the right-hand side only involve contributions of $\eta_\ell(T)^2$.
We employ the same type of argument for the other contributions.
Together with the estimate
\begin{align*}
 \norm{(1-\Pi_\star)\nabla_\Gamma g}{L^2(\partial T'\cap\Gamma_D)}
 \le \norm{(1-\Pi_\ell)\nabla_\Gamma g}{L^2(\partial T'\cap\Gamma_D)}
 \quad \text{for all }T'\in\TT_\star
\end{align*}
which follows from the fact that the $L^2$-projection onto $\PP^{p-1}(\EE_\star^D)$
is even piecewise orthogonal, we are led to
\begin{align*}
 &\Big(\sum_{T'\in\TT_\star}
 |T'|^{2/d}\,\norm{f+\Delta U_\ell}{L^2(T')}^2
 +|T'|^{1/d}\,\norm{[\partial_nU_\ell]}{L^2(\partial T'\cap\Omega)}^2
 \\&\qquad\qquad\qquad\qquad
+|T'|^{1/d}\,\norm{\phi-\partial_n U_\ell}{L^2(\partial T'\cap\Gamma_N)}^2
 +|T'|^{1/d}\,\norm{(1-\Pi_\star)\nabla_\Gamma g}{L^2(\partial T'\cap\Gamma_D)}^2
 \Big)
 \\&\quad
 \le\sum_{T\in\TT_\ell}\eta_\ell(T)^2
 - (1-2^{-1/d})\,\sum_{T\in\TT_\ell\backslash\TT_\star}\eta_\ell(T)^2
 \\&\quad
 \le\big(1-(1-2^{-1/d})\theta\big)\,\eta_\ell^2.
\end{align*}
The final estimate follows from $\MM_\ell\subseteq\TT_\ell\backslash\TT_\star$
and the D\"orfler marking~\eqref{eq:doerfler}, i.e.\ we subtract less.
With $0<q:=1-(1-2^{-1/d})\theta$, we have thus proved
\begin{align*}
 \eta_\star^2 \le (1+\delta)\,q\,\eta_\ell^2
 + (1+\delta^{-1})\,C\,\norm{\nabla(U_\star - U_\ell)}{L^2(\Omega)}^2.
\end{align*}
Finally, we choose $\delta>0$ sufficiently small such that
$0<\q{reduction}:=(1+\delta)\,q<1$ and define $\c{reduction} = (1+\delta^{-1})\,C$.
\end{proof}

\subsection{A~priori convergence of Scott-Zhang projection}
\label{section:apriori:scottzhang}%
We assume that $(\sz_{\ell+1}v)|_T = (\sz_{\ell}v)|_T$
for all $T\in\TT_\ell\cap\TT_{\ell+1}$ with $\omega_{\ell,T}\subseteq\bigcup(\TT_\ell\cap\TT_{\ell+1})$ 
which can always be achieved by an appropriate choice of the dual basis functions in the definition of $\sz_{\ell+1}$. In this section, we prove that under the aforegoing assumptions and for arbitrary refinement, i.e.\ $\TT_\ell = \refine(\TT_{\ell-1})$ for all $\ell\in\N$, the limit of the Scott-Zhang interpolants $\sz_\ell v$ exists in $H^1(\Omega)$ as $\ell\to\infty$.
In particular, this provides the essential ingredient to prove that, under
the same assumptions, the limit of Galerkin solutions $U_\ell$ exists in $H^1(\Omega)$. For 2D and first-order elements $p=1$, this result has first been proved in~\cite{fpp}. Although the proof transfers directly to the present setting, we include it for the sake of completeness.

\begin{proposition}
\label{prop:apriori:scottzhang}%
Let $v\in H^1(\Omega)$. Then, the limit $\sz_\infty v:=\lim_\ell\sz_\ell v$ exists in $H^1(\Omega)$ and defines a continuous linear operator $\sz_\infty:H^1(\Omega)\to H^1(\Omega)$.
\end{proposition}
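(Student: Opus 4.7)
The plan is to prove that $\{\sz_\ell v\}_\ell$ is a Cauchy sequence in $H^1(\Omega)$; continuity of the resulting operator $\sz_\infty : H^1(\Omega) \to H^1(\Omega)$ then follows from linearity of the $\sz_\ell$ and the uniform $H^1$-stability~\eqref{eq:scottzhangstability}. Since newest vertex bisection produces nested finite element spaces $\V_\ell := \SS^p(\TT_\ell) \subseteq \V_{\ell+1}$, each $\sz_\ell v$ lives in the closed Hilbert subspace $\V_\infty := \overline{\bigcup_\ell \V_\ell}^{\,H^1(\Omega)}$, and the sequence is uniformly bounded by $\c{scottzhang}\,\norm{v}{H^1(\Omega)}$.

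The key tool is to split $\Omega$ elementwise into a stable and a refining region. Set
\[
 \Omega_\ell^{\rm stab} := \bigcup\bigl\{T \in \TT_\ell\cap\TT_{\ell+1}\,:\, \omega_{\ell,T}\subseteq\bigcup(\TT_\ell\cap\TT_{\ell+1})\bigr\},
\]
so that the compatibility hypothesis gives $(\sz_{\ell+1}v - \sz_\ell v)|_{\Omega_\ell^{\rm stab}} = 0$. Iterating, on the ``eventually stable'' region $\Omega^{\rm stab} := \bigcup_\ell \bigcap_{k\ge\ell}\Omega_k^{\rm stab}$ the sequence $\sz_\ell v$ becomes stationary: for every compact $K\Subset\Omega^{\rm stab}$ there is $\ell(K)$ with $(\sz_k v)|_K = (\sz_{\ell(K)}v)|_K$ for all $k\ge\ell(K)$. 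On the complement $\Omega^{\rm ref} := \Omega\setminus\Omega^{\rm stab}$ the local mesh-width $h_\ell|_x$ tends to zero pointwise a.e., and combining the $L^2$-approximation~\eqref{eq:scottzhangapprox} with the uniform $H^1$-stability and a density argument in $\V_\infty$ should yield $\sz_\ell v \to v$ in $H^1_{\rm loc}(\Omega^{\rm ref})$.

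Putting these two regimes together produces a pointwise-a.e.\ limit that is the natural candidate for $\sz_\infty v$, and the Cauchy property in $H^1(\Omega)$ then follows once the ``boundary-layer'' contribution is controlled. The main obstacle I foresee is precisely this boundary layer: elements $T\subseteq\Omega_\ell^{\rm stab}$ whose patch $\omega_{\ell,T}$ reaches into $\Omega^{\rm ref}$ may still carry $(\sz_{\ell+1}v - \sz_\ell v)|_T\neq 0$, since the compatibility hypothesis does not apply to them. I would bound this boundary contribution using the local stability $\norm{\nabla\sz_\ell w}{L^2(T)} \le \c{scottzhang}\,\norm{\nabla w}{L^2(\omega_{\ell,T})}$ applied with $w := v - \sz_k v$ for a carefully chosen auxiliary level $k$, exploiting that shape regularity together with the structure of newest vertex bisection forces the total Lebesgue measure of such transition patches to vanish as $\ell\to\infty$. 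Once Cauchy-ness is established, the limit $\sz_\infty v\in \V_\infty$ is well defined, and linearity and boundedness of $\sz_\infty:H^1(\Omega)\to H^1(\Omega)$ are inherited by passing to limits.
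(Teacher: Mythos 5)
Your overall strategy---splitting $\Omega$ into a region where the compatibility assumption makes $\sz_\ell v$ eventually stationary, a region where the mesh-size decays and approximation takes over, and a transition layer that must be shown to be negligible---is exactly the structure of the paper's proof, which follows \cite{msv}. The difficulty is that the two steps you yourself flag with ``should yield'' and ``forces the total Lebesgue measure \dots to vanish'' are precisely where the substance of the argument lies, and as written they are gaps. The vanishing of the measure of the transition region (elements that at level $\ell$ are neither stationary nor small) is not a soft consequence of shape regularity and newest vertex bisection; it is the content of \cite[Proposition~4.2]{msv} (the paper's set $\Omega_\ell^*$), and the companion statement that the mesh-width decays \emph{uniformly} on the refined region, $\norm{\chi_{\Omega_\ell}h_\ell}{L^\infty(\Omega)}\to0$, is \cite[Corollary~4.1]{msv}, cf.~\eqref{eq1:apriorisz}. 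Even your weaker pointwise claim on $\Omega^{\rm ref}$ needs an argument: an element that is never refined but whose patch is touched infinitely often has constant $h_\ell$ there, and ruling this out uses that elements sharing a vertex have comparable diameters in a conforming shape-regular mesh.

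Moreover, even once the transition region is known to have small measure, smallness of measure alone does not make its $H^1$-contribution small: one must combine the local stability of $\sz_\ell$ with the non-concentration (absolute continuity) of $\norm{v}{H^1}$ on sets of small measure, which is exactly how the paper uses~\eqref{eq:omstar}; you invoke the stability half but not the non-concentration half. On the refined region, the first-order $L^2$-approximation property~\eqref{eq:scottzhangapprox} does not control the \emph{gradient} of the error, and a ``density argument in $\V_\infty$'' cannot repair this since $v\notin\V_\infty$ in general; the paper instead approximates $v$ by $v_\eps\in H^2(\Omega)$ and uses stability together with the Bramble--Hilbert lemma to obtain $\norm{(1-\sz_\ell)v}{H^1(\Omega_\ell)}\lesssim\norm{h_\ell D^2v_\eps}{L^2(\omega_\ell(\Omega_\ell))}+\eps$. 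Finally, your treatment of the stable region via compact subsets $K\Subset\Omega^{\rm stab}$ is only qualitative; to get a single Cauchy estimate you need a level-$\ell$ decomposition (the paper's $\Omega_\ell^0$, $\Omega_\ell$, $\Omega_\ell^*$) so that the three mechanisms---exact stationarity, uniform smallness of $h_\ell$, and small measure---apply simultaneously at each $\ell$.
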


\begin{proof}
If the limit $\sz_\infty v$ exists for all $v \in H^1(\Omega)$, it is a consequence of the Banach-Steinhaus theorem that $\sz_\infty$ defines, in fact, a linear and continuous operator.
Hence, it remains to prove the existence of $\sz_\infty v$ in $H^1(\Omega)$ for fixed $v \in H^1(\Omega)$. To that end, we follow the ideas from~\cite{msv} and define the following subsets of $\Omega$:
\begin{align*}
\Omega_\ell^0 &:= \textstyle{\bigcup\{T \in \TT_\ell\,:\, \omega_{\ell}(T) \subseteq \bigcup\big(\bigcup_{i=0}^\infty \bigcap_{j=i}^\infty \TT_j}\big)\},\\
\Omega_{\ell} &:= \textstyle{\bigcup\{T \in \TT_\ell\,:\, \text{There exists }k \geq 0\text{ s.t. } \omega_{\ell}(T)\text{ is at least uniformly refined in }\TT_{\ell+k} \}},\\
\Omega_{\ell}^* &:= \Omega\setminus(\Omega_{\ell} \cup \Omega_\ell^0),
\end{align*}
where $\omega_{\ell}(\omega):=\bigcup\{T \in \TT_\ell \,:\, T\cap \omega \neq \emptyset\}$ denotes the patch of $\omega\subseteq\Omega$ with respect to $\TT_\ell$.
In other words, $\Omega_\ell^0$ is the set of all elements whose patch is not refined anymore and thus stays the same in $\TT_k$ for all $k\ge\ell$, whereas $\Omega_\ell$ denotes the set of elements whose patch is uniformly refined at least once after $k$ steps.
According to~\cite[Corollary 4.1]{msv} and uniform shape regularity, it holds that
\begin{align}
\label{eq1:apriorisz}
 \lim_{\ell\to\infty}\norm{\chi_{\Omega_\ell}h_\ell}{L^\infty(\Omega)}=0 = \lim_{\ell \to \infty}\norm{\chi_{\omega_\ell(\Omega_\ell)}h_\ell}{L^\infty(\Omega)},
\end{align}
where $\chi_{\Omega_\ell}:\Omega \to \{0,1\}$ denotes the characteristic function with respect to $\Omega_\ell$ and $\chi_{\omega_\ell(\Omega_\ell)}$ the characteristic function of the patch of $\Omega_\ell$.
Now, let $\eps>0$ be arbitrary. Since the space $H^2(\Omega)$ is dense in $H^1(\Omega)$, we find $v_\eps \in H^2(\Omega)$ such that
$\norm{v-v_\eps}{H^1(\Omega)} \leq \eps$. Due to local approximation and stability properties of $\sz_\ell$ (see~\eqref{eq:scottzhangstability} and~\eqref{eq:scottzhangapprox}), we obtain
\begin{align*}
\norm{(1-\sz_\ell)v}{H^1(\Omega_{\ell})}\lesssim \norm{(1-\sz_\ell)v_\eps}{H^1(\Omega_{\ell})}+\eps\leq \norm{h_\ell\, D^2 v_\eps}{L^2(\omega_\ell(\Omega_{\ell}))}
+\eps,
\end{align*}
where the last estimate is a 
consequence of the Bramble-Hilbert lemma.
By use of~\eqref{eq1:apriorisz}, we may choose $\ell_0 \in \N$ sufficiently large
to guarantee the estimate
$\norm{h_\ell\,D^2 v_\eps}{L^2(\omega_\ell(\Omega_{\ell}))}\leq
\norm{h_\ell}{L^\infty(\omega_\ell(\Omega_{\ell}))}\norm{D^2 v_\eps}{L^2(\Omega)}\leq \eps$ for all $\ell \geq \ell_0$. Then, there holds
\begin{align}
 \label{eq:szunif}
 \norm{(1-\sz_\ell)v}{H^1(\Omega_{\ell})}\lesssim \eps \quad \text{for all }\ell \geq \ell_0.
\end{align}
According to~\cite[Proposition 4.2]{msv}, it holds $\lim_{\ell}|\Omega_\ell^*|=0$. This provides the existence of $\ell_1 \in \N$ such that
\begin{align}
\label{eq:omstar}
 \norm{v}{H^1(\omega_\ell(\Omega_{\ell}^*))}\leq \eps\quad \text{for all } \ell \geq \ell_1
\end{align}
due to the non-concentration of Lebesgue functions and uniform shape regularity, i.e.\ $|\omega_\ell(\Omega_\ell^*)|\lesssim |\Omega_\ell^*|$.
With these preparations, we finally aim at proving that $\sz_\ell v$ is a Cauchy sequence in $H^1(\Omega)$: Let $\ell \geq \max\{\ell_0,\ell_1\}$ and $k\geq 0$ be arbitrary.
First, we use that for any $T \in \TT_\ell$, $(\sz_\ell v)|_T$ depends only on $v|_{\omega_\ell(T)}$ because $\omega_{\ell,T}\subseteq \omega_\ell(T)$. Then, by definition of $\Omega_\ell^0$ and
our assumption on the definition of $\sz_\ell$ and $\sz_{\ell+k}$ on $\TT_\ell \cap \TT_{\ell+k}$, we obtain
\begin{align}
 \label{eq1:szapriori}
\norm{\sz_\ell v- \sz_{\ell+k} v }{H^1(\Omega_\ell^0)} = 0.
\end{align}
Second, due to the local stability of $\sz_\ell$ and~\eqref{eq:omstar}, there holds
\begin{align}
 \label{eq2:szapriori}
\begin{array}{rcl}
\norm{\sz_\ell v - \sz_{\ell+k} v}{H^1(\Omega_{\ell}^*)} &\leq& \norm{\sz_\ell v}{H^1(\Omega_{\ell}^*)}+\norm{\sz_{\ell+k} v}{H^1(\Omega_{\ell}^*)}\\
&\lesssim& \norm{v}{H^1(\omega_\ell(\Omega_{\ell}^*))}+\norm{v}{H^1(\omega_{\ell+k}(\Omega_{\ell}^*))}\\
&\leq& 2 \norm{v}{H^1(\omega_\ell(\Omega_{\ell}^*))} \leq 2 \eps.
\end{array}
\end{align}
Third, we proceed by exploiting~\eqref{eq:szunif}. We have
\begin{align}
 \label{eq3:szapriori}
\norm{\sz_\ell v - \sz_{\ell+k} v}{H^1(\Omega_{\ell})} \leq \norm{\sz_\ell v - v}{H^1(\Omega_{\ell})}+\norm{v-\sz_{\ell+k}v}{H^1(\Omega_{\ell})}
 \lesssim \eps.
\end{align}
Combining the estimates from~\eqref{eq1:szapriori}--\eqref{eq3:szapriori}, we conclude
 $\norm{\sz_\ell v - \sz_{\ell+k} v}{H^1(\Omega)} \lesssim \eps$,
i.e.\ $(\sz_\ell v)$ is a Cauchy sequence in $H^1(\Omega)$ and hence convergent.
\end{proof}

\begin{corollary}
\label{cor:apriori:scottzhang}%
Under the assumptions of Proposition~\ref{prop:apriori:scottzhang}, the limit
$g_\infty := \lim_\ell \sz_\ell g$ exists in $H^{1/2}(\Gamma_D)$.
\end{corollary}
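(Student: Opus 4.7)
The plan is to reduce the claim to Proposition~\ref{prop:apriori:scottzhang} by lifting $g$ to a suitable function on $\Omega$, applying the volume result there, and then pushing the convergence back to the trace. The key point is that the Scott-Zhang operator on $\Gamma_D$ was defined via the volume operator $\sz_\ell$ by $\sz_\ell g = (\sz_\ell \widehat w)|_{\Gamma_D}$ for any $\widehat w \in H^1(\Omega)$ with $\widehat w|_{\Gamma_D} = g$, and that this trace value is independent of the particular extension used (it only depends on $g|_{\omega_{\ell,E}^\Gamma}$ for each Dirichlet facet $E$).

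The steps I would carry out are as follows. First, I would produce an $H^1(\Omega)$-extension of $g$: since $g\in H^1(\Gamma_D)\subseteq H^{1/2}(\Gamma_D)$, the graph-norm characterization~\eqref{eq:dp:norm2} yields some $\widehat g \in H^{1/2}(\Gamma)$ with $\widehat g|_{\Gamma_D}=g$, and the lifting operator $\LL:H^{1/2}(\Gamma)\to H^1(\Omega)$ then provides $\widetilde g := \LL \widehat g \in H^1(\Omega)$ with $\widetilde g|_{\Gamma_D}=g$. Second, I would invoke Proposition~\ref{prop:apriori:scottzhang} to obtain
\begin{align*}
 \sz_\ell \widetilde g \to \sz_\infty \widetilde g \quad \text{in } H^1(\Omega).
\end{align*}
Third, since the trace operator $H^1(\Omega)\to H^{1/2}(\Gamma)$ followed by restriction to $\Gamma_D$ is continuous into $H^{1/2}(\Gamma_D)$ (with the graph norm), this yields
\begin{align*}
 (\sz_\ell \widetilde g)|_{\Gamma_D} \to (\sz_\infty \widetilde g)|_{\Gamma_D} \quad \text{in } H^{1/2}(\Gamma_D).
\end{align*}
Finally, by the compatibility of the trace-space Scott-Zhang operator with the volume operator, $(\sz_\ell \widetilde g)|_{\Gamma_D} = \sz_\ell g$, so one defines $g_\infty := (\sz_\infty \widetilde g)|_{\Gamma_D}$ and obtains $\sz_\ell g \to g_\infty$ in $H^{1/2}(\Gamma_D)$, as desired.

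I do not expect any serious obstacle: the only subtle point is making sure the extension exists and is legitimately in $H^1(\Omega)$, and that Proposition~\ref{prop:apriori:scottzhang}'s standing assumption (consistency of $\sz_{\ell+1}$ with $\sz_\ell$ on unrefined patches) transfers to $\sz_\ell$ acting on $g$, which it does automatically since $\sz_\ell g$ was defined through the volume operator. Everything else is a direct consequence of continuity of traces and of the previously proved volume-level convergence.
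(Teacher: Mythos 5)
Your argument is correct and follows essentially the same route as the paper: extend $g$ to $\widehat g\in H^{1/2}(\Gamma)$, lift via $\LL$, apply Proposition~\ref{prop:apriori:scottzhang} to $v=\LL\widehat g$, use the compatibility $(\sz_\ell v)|_{\Gamma_D}=\sz_\ell g$, and push the $H^1(\Omega)$-convergence to $H^{1/2}(\Gamma_D)$ through the (graph-norm) continuity of the trace and restriction, which is exactly the chain of inequalities used in the paper's proof.
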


\begin{proof}
Let $\widehat g\in H^{1/2}(\Gamma)$ denote an arbitrary extension of $g$.
With some lifting operator $\LL$, we define $v:=\LL \widehat g$ and note that
$(\sz_\ell v)|_{\Gamma_D} = (\sz_\ell\widehat g)|_{\Gamma_D} = \sz_\ell g$. Since $\sz_\infty v = \lim_\ell \sz_\ell v$ exists in $H^1(\Omega)$, we obtain
\begin{align*}
 \norm{(\sz_\infty v)|_{\Gamma_D} - \sz_\ell g}{H^{1/2}(\Gamma_D)}
 \le \norm{(\sz_\infty v)|_{\Gamma} - \sz_\ell\widehat g}{H^{1/2}(\Gamma)}
 \le \norm{\sz_\infty v - \sz_\ell v}{H^{1}(\Omega)}
 \xrightarrow{\ell\to\infty}0.
\end{align*}
This concludes the proof with $(\sz_\infty v)|_{\Gamma_D} =: g_\infty$.
\end{proof}

\subsection{A~priori convergence of orthogonal projections}
In this subsection, we recall an early observation from~\cite[Lemma~6.1]{bv} which will be applied several times.
We stress that the original proof of~\cite{bv} is
based on the orthogonal projection. However, the argument also works for
(possibly nonlinear) projections with $P_\ell P_k = P_\ell$ for $\ell\le k$ which
satisfy a C\'ea-type quasi-optimality. Since the Scott-Zhang projection satisfies
$\sz_\ell\sz_k \neq \sz_\ell$, in general, Proposition~\ref{prop:apriori:scottzhang} is \emph{not} a consequence of such an abstract result.

\begin{lemma}
\label{lemma:apriori:orthogonal}%
Let $H$ be a Hilbert space and $X_\ell$ be a sequence of closed subspaces with $X_\ell\subseteq X_{\ell+1}$ for all $\ell\ge0$. Let $P_\ell:H\to X_\ell$ denote the $H$-orthogonal projection onto $X_\ell$. Then, for each $x\in H$, the limit $x_\infty:=\lim\limits_{\ell\to\infty} P_\ell x$ exists in $H$.\qed
\end{lemma}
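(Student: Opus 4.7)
The plan is to exploit the nesting $X_\ell\subseteq X_{\ell+1}$ together with the defining characterisation of the orthogonal projection to show that $(P_\ell x)_\ell$ is a Cauchy sequence in $H$. The key observation is the following projection identity: for all $\ell\le k$, one has $P_\ell P_k = P_\ell$. Indeed, since $X_\ell\subseteq X_k$, any $y\in X_\ell$ satisfies $P_k y = y$, so $P_k$ acts as the identity on $X_\ell$; applying $P_\ell$ to both sides of $P_k x = P_k x$ and using that $P_\ell$ only sees the component in $X_\ell\subseteq X_k$ yields $P_\ell P_k x = P_\ell x$.

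From this identity, I will deduce that the increment $P_k x - P_\ell x$ lies in $X_k$ and is $H$-orthogonal to $X_\ell$: for any $y\in X_\ell$,
\begin{align*}
 \langle P_k x - P_\ell x, y\rangle
 = \langle P_k x, y\rangle - \langle P_\ell x, y\rangle
 = \langle x, P_k y\rangle - \langle x, P_\ell y\rangle
 = \langle x, y\rangle - \langle x, y\rangle = 0,
\end{align*}
where I used self-adjointness of $P_\ell, P_k$ and that $y\in X_\ell \subseteq X_k$. In particular, $P_k x - P_\ell x \perp P_\ell x$, and Pythagoras gives
\begin{align*}
 \|P_k x\|^2 = \|P_\ell x\|^2 + \|P_k x - P_\ell x\|^2 \quad\text{for all }\ell\le k.
\end{align*}

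Consequently, the real sequence $(\|P_\ell x\|^2)_\ell$ is monotonically increasing, and from $\|P_\ell x\|^2 + \|x-P_\ell x\|^2 = \|x\|^2$ it is bounded above by $\|x\|^2$. Hence it is a Cauchy sequence in $\R$, and the above Pythagoras identity turns its Cauchy property into
\begin{align*}
 \|P_k x - P_\ell x\|^2 = \|P_k x\|^2 - \|P_\ell x\|^2 \xrightarrow{\ell,k\to\infty} 0.
\end{align*}
Therefore $(P_\ell x)_\ell$ is Cauchy in $H$ and thus converges to some $x_\infty\in H$, which concludes the proof.

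The argument is essentially routine once the projection identity $P_\ell P_k = P_\ell$ is set up, so there is no real obstacle; the only subtlety worth highlighting is that \emph{monotonicity plus boundedness of the scalar sequence $(\|P_\ell x\|^2)$ is precisely what makes the vector sequence Cauchy}, a step which relies crucially on the orthogonality of successive increments and therefore does not generalise to arbitrary (e.g.\ Scott--Zhang) projections. This explains the remark preceding the lemma that Proposition~\ref{prop:apriori:scottzhang} is genuinely stronger and cannot be inferred from the present abstract result.
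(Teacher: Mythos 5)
Your proof is correct and complete: the orthogonality of the increment $P_kx-P_\ell x$ to $X_\ell$, the resulting Pythagoras identity, and the monotone boundedness of $\norm{P_\ell x}{H}^2$ do yield the Cauchy property of $(P_\ell x)_\ell$, and every step is justified (the slightly hand-wavy derivation of $P_\ell P_k=P_\ell$ is harmless, since you never actually use it -- the self-adjointness computation stands on its own). Note that the paper itself gives no proof here; it cites~\cite[Lemma~6.1]{bv}, and its surrounding remark indicates the argument it has in mind: identify the limit as the orthogonal projection of $x$ onto $X_\infty:=\overline{\bigcup_\ell X_\ell}$ and use the semigroup property $P_\ell P_k=P_\ell$ together with a C\'ea-type quasi-optimality and density of $\bigcup_\ell X_\ell$ in $X_\infty$, which is why the paper can assert that the result extends to possibly nonlinear projections satisfying these two properties. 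Your route is more elementary but does not identify the limit and, as you correctly observe, hinges on orthogonality; be aware, though, that your closing diagnosis differs slightly from the paper's: the reason the abstract lemma does not cover the Scott--Zhang projection is stated there as the failure of $\sz_\ell\sz_k=\sz_\ell$, not merely the lack of orthogonality.
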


Since the discrete trace spaces $\SS^p(\EE_\ell^D)$ are finite dimensional and hence closed subspaces of $H^{1/2}(\Gamma_D)$, the lemma immediately applies to the $H^{1/2}(\Gamma_D)$-orthogonal projection.

\begin{corollary}
\label{cor:apriori:H12}%
Let $P_\ell:H^{1/2}(\Gamma_D)\to\SS^p(\EE_\ell^D)$ denote the $H^{1/2}(\Gamma_D)$-orthogonal projection. Then, the limit $g_\infty := \lim\limits_{\ell\to\infty} P_\ell g$ exists in $H^{1/2}(\Gamma_D)$.\qed
\end{corollary}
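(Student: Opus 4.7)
The claim is essentially a direct application of Lemma~\ref{lemma:apriori:orthogonal} to the Hilbert space $H=H^{1/2}(\Gamma_D)$ (equipped with the graph norm from Section~\ref{section:spaces}, which makes it a Hilbert space up to equivalence of inner products) and the sequence of subspaces $X_\ell := \SS^p(\EE_\ell^D)$. So the plan reduces to verifying the two hypotheses of the lemma: that each $X_\ell$ is a closed subspace of $H$, and that the sequence is nested, $X_\ell \subseteq X_{\ell+1}$.

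For closedness, I would simply observe that $\SS^p(\EE_\ell^D)$ is finite dimensional since $\EE_\ell^D$ consists of finitely many facets and on each facet we take polynomials of degree at most $p$. Finite-dimensional subspaces of a Hilbert space are automatically closed, so this step is immediate.

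For nestedness, the key point is that Algorithm~\ref{algorithm2} uses newest vertex bisection, so $\TT_{\ell+1}$ is always a refinement of $\TT_\ell$. Since refinement of simplices preserves the piecewise-polynomial ansatz, we have the bulk nestedness $\SS^p(\TT_\ell) \subseteq \SS^p(\TT_{\ell+1})$. Restricting traces to $\Gamma_D$ and using the definition~\eqref{eq:fespace:gamma}, we obtain $\SS^p(\EE_\ell^D) \subseteq \SS^p(\EE_{\ell+1}^D)$. With both hypotheses verified, Lemma~\ref{lemma:apriori:orthogonal} directly yields the existence of $g_\infty = \lim_\ell P_\ell g$ in $H^{1/2}(\Gamma_D)$.

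There is no real obstacle here; the only subtlety worth mentioning is the choice of inner product on $H^{1/2}(\Gamma_D)$, since our definition~\eqref{eq:dp:norm2} is given via a graph norm rather than an explicit Hilbert structure. However, this graph norm is equivalent to the standard interpolation norm (which is induced by an inner product), and the notion of an orthogonal projection $P_\ell$ is unambiguous once one fixes such an equivalent Hilbert inner product; Lemma~\ref{lemma:apriori:orthogonal} applies verbatim. Hence the corollary follows without further work.
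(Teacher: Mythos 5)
Your proposal is correct and follows essentially the same route as the paper: the paper also obtains the corollary as an immediate application of Lemma~\ref{lemma:apriori:orthogonal}, noting that the discrete trace spaces $\SS^p(\EE_\ell^D)$ are finite dimensional (hence closed) and nested since $\TT_{\ell+1}=\refine(\TT_\ell)$. Your extra remark on fixing an equivalent Hilbert inner product for the graph norm~\eqref{eq:dp:norm2} is a harmless clarification that the paper leaves implicit.
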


%

\begin{corollary}
\label{cor:apriori:L2}%
Let $\pi_\ell:L^2(\Gamma_D)\to\SS^1(\EE_\ell^D)$ denote the $L^2(\Gamma_D)$-orthogonal projection. Then, the limit $g_\infty := \lim\limits_{\ell\to\infty}\pi_\ell g$ exists weakly in $H^1(\Gamma_D)$ and strongly in $H^\alpha(\Gamma_D)$ for all $0\le \alpha<1$.
\end{corollary}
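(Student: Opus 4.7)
The plan is to combine three ingredients: nestedness of the discrete trace spaces (to apply Lemma~\ref{lemma:apriori:orthogonal} in $L^2$), uniform $H^1(\Gamma_D)$-stability of $\pi_\ell$ in the lowest-order case (to produce a weak $H^1$-limit), and interpolation between $L^2$ and $H^1$ (to upgrade to strong convergence in $H^\alpha$ for $\alpha<1$).

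First, I would observe that newest vertex bisection produces nested meshes on $\Gamma_D$, so $\SS^1(\EE_\ell^D)\subseteq\SS^1(\EE_{\ell+1}^D)$ are nested closed subspaces of $H:=L^2(\Gamma_D)$. Lemma~\ref{lemma:apriori:orthogonal} then immediately yields the existence of the strong $L^2(\Gamma_D)$-limit
\begin{align*}
 g_\infty := \lim_{\ell\to\infty}\pi_\ell g
 \quad\text{in }L^2(\Gamma_D).
\end{align*}

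Second, I would invoke the uniform $H^1(\Gamma_D)$-stability of the $L^2$-projection $\pi_\ell$ onto $\SS^1(\EE_\ell^D)$, which is known for $p=1$ on shape-regular meshes generated by newest vertex bisection (cf.~\cite{kp} and the references therein for uniform $H^{1/2}$-stability; the analogous $H^1$-stability follows in the same framework). This gives $\norm{\pi_\ell g}{H^1(\Gamma_D)}\lesssim\norm{g}{H^1(\Gamma_D)}$ uniformly in $\ell$, so the sequence $(\pi_\ell g)$ is bounded in the Hilbert space $H^1(\Gamma_D)$. By the Banach-Alaoglu theorem, any subsequence has a further subsequence converging weakly in $H^1(\Gamma_D)$ to some limit; weak $H^1$-convergence implies weak $L^2$-convergence, and by uniqueness of the $L^2$-limit, this limit must coincide with $g_\infty$. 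Consequently, the full sequence converges, i.e.\ $\pi_\ell g\rightharpoonup g_\infty$ weakly in $H^1(\Gamma_D)$, and in particular $g_\infty\in H^1(\Gamma_D)$ with $\norm{g_\infty}{H^1(\Gamma_D)}\le\liminf_\ell\norm{\pi_\ell g}{H^1(\Gamma_D)}<\infty$.

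Third, I would combine strong $L^2$-convergence with $H^1$-boundedness via interpolation. For $0\le\alpha<1$, the interpolation inequality
\begin{align*}
 \norm{\pi_\ell g-g_\infty}{H^\alpha(\Gamma_D)}
 \lesssim \norm{\pi_\ell g-g_\infty}{L^2(\Gamma_D)}^{1-\alpha}\,
 \norm{\pi_\ell g-g_\infty}{H^1(\Gamma_D)}^{\alpha}
\end{align*}
together with the uniform $H^1$-bound on the right-hand side and the strong $L^2$-convergence proved in the first step yields $\norm{\pi_\ell g-g_\infty}{H^\alpha(\Gamma_D)}\to0$, as required.

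The main obstacle is the uniform $H^1(\Gamma_D)$-stability of $\pi_\ell$. This is a nontrivial property that depends crucially on the restriction $p=1$ and on the mesh-grading properties of newest vertex bisection; without it, one only obtains boundedness in $H^{1/2}(\Gamma_D)$, which would be insufficient for the weak $H^1$-convergence claim. Once this stability is available from the literature, the rest of the argument is a standard combination of Banach--Alaoglu and interpolation.
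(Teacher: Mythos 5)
Your proposal is correct and follows essentially the same route as the paper: the strong $L^2(\Gamma_D)$-limit from Lemma~\ref{lemma:apriori:orthogonal} via nestedness, the uniform $H^1(\Gamma_D)$-stability of $\pi_\ell$ for $p=1$ under newest vertex bisection (which \cite{kp} supplies directly, so your hedging there is unnecessary), and the subsequence principle to upgrade to weak $H^1$-convergence of the whole sequence. The only cosmetic difference is the last step, where the paper invokes the compact embedding $H^1(\Gamma_D)\subset H^\alpha(\Gamma_D)$ while you use the interpolation inequality for $H^\alpha(\Gamma_D)=[L^2(\Gamma_D);H^1(\Gamma_D)]_\alpha$ together with the uniform $H^1$-bound; both are equally standard and valid.
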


\begin{proof}
According to Lemma~\ref{lemma:apriori:orthogonal}, the limit $g_\infty = \lim_\ell\pi_\ell g$ exists strongly in $L^2(\Gamma_D)$.
Moreover and according to~\cite[Theorem~8]{kp}, the $\pi_\ell$ are uniformly stable in $H^1(\Gamma_D)$, since we use newest vertex bisection. Hence, the sequence $(\pi_\ell g)$ is uniformly bounded in $H^1(\Gamma_D)$ and thus admits a weakly convergent subsequence $(\pi_{\ell_k}g)$ with weak limit $\widetilde g_\infty\in H^1(\Gamma_D)$, where weak convergence is understood in $H^1(\Gamma_D)$. Since the inclusion $H^1(\Gamma_D)\subset L^2(\Gamma_D)$ is compact, the sequence $(\pi_{\ell_k} g)$ converges strongly to $\widetilde g_\infty$ in $L^2(\Gamma_D)$. From uniqueness of limits, we conclude $\widetilde g_\infty = g_\infty$. Iterating this argument, we see that each subsequence of $(\pi_\ell g)$ contains a subsequence which converges weakly to $g_\infty$ in $H^1(\Gamma_D)$. This proves that the entire sequence converges weakly to $g_\infty$ in $H^1(\Gamma_D)$. Strong convergence in $H^\alpha(\Gamma_D)$ follows by compact inclusion $H^1(\Gamma_D)\subset H^\alpha(\Gamma_D)$ for all $0\le \alpha<1$.
\end{proof}

\subsection{A~priori convergence of Galerkin solutions}
We now show that the limit of Galerkin solutions $U_\ell$ exists as $\ell\to\infty$
provided that the meshes are nested, i.e.\ $\TT_{\ell+1} = \refine(\TT_\ell)$.

\begin{proposition}
\label{prop:apriori:galerkin}%
Under Assumption~\eqref{eq:apriori:dirichlet} that $g_\infty := \lim_\ell\P_\ell g$ exists in $H^{1/2}(\Gamma)$, also the limit
$U_\infty := \lim_\ell U_\ell$ of Galerkin solutions exists in $H^1(\Omega)$.
\end{proposition}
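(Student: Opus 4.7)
The main obstacle is that the discrete ansatz spaces $\set{V_\ell\in\SS^p(\TT_\ell)}{V_\ell|_{\Gamma_D}=\P_\ell g}$ are non-nested, so Lemma~\ref{lemma:apriori:orthogonal} cannot be applied directly to the $U_\ell$. The plan is to build a discrete lifting $L_\ell \in \SS^p(\TT_\ell)$ with $L_\ell|_{\Gamma_D} = \P_\ell g$ and $L_\ell \to L_\infty$ in $H^1(\Omega)$, thereby reducing the assertion to convergence of Galerkin approximations in the \emph{nested} test spaces $\SS^p_D(\TT_\ell)$.

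For the construction of $L_\ell$, I pick extensions $\widehat g_\ell\in H^{1/2}(\Gamma)$ of $\P_\ell g$ that converge in $H^{1/2}(\Gamma)$ to an extension $\widehat g_\infty$ of $g_\infty$ (available by continuity of the trace-space extension $H^{1/2}(\Gamma_D)\to H^{1/2}(\Gamma)$). Borrowing the construction from the proof of Proposition~\ref{prop:cea}, I set $L_\ell := \sz_\ell(\LL\sz_\ell \widehat g_\ell)\in\SS^p(\TT_\ell)$, where $\LL$ is a continuous lifting operator. The projection properties of $\sz_\ell$ yield $L_\ell|_{\Gamma_D} = \P_\ell g$. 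Convergence $L_\ell\to L_\infty$ in $H^1(\Omega)$ is obtained by chaining the uniform $H^{1/2}$- and $H^1$-stability of $\sz_\ell$ (see~\eqref{eq:scottzhang:H12(Gamma)} and~\eqref{eq:scottzhangstability}) with the a~priori convergence result of Proposition~\ref{prop:apriori:scottzhang} applied to the \emph{fixed} function $\LL\widehat g_\infty\in H^1(\Omega)$.

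Given such a lifting, I decompose $U_\ell = U_\ell^0 + L_\ell$ with $U_\ell^0\in\SS^p_D(\TT_\ell)$; by subtracting $\dual{\nabla L_\ell}{\nabla V_\ell}_\Omega$ from both sides of~\eqref{eq:galerkin}, $U_\ell^0$ is the exact Galerkin solution of the shifted problem
\begin{align*}
 \dual{\nabla U_\ell^0}{\nabla V_\ell}_\Omega
 = \dual{f}{V_\ell}_\Omega + \dual{\phi}{V_\ell}_{\Gamma_N} - \dual{\nabla L_\ell}{\nabla V_\ell}_\Omega
 \quad\text{for all }V_\ell\in\SS^p_D(\TT_\ell).
\end{align*}
Since $\dual{\nabla\cdot}{\nabla\cdot}_\Omega$ is equivalent to the $H^1$-inner product on $H^1_D(\Omega)$ by the Friedrichs inequality, $U_\ell^0 = P_\ell^0 \widetilde u_\ell^0$, where $P_\ell^0$ is the $\dual{\nabla\cdot}{\nabla\cdot}_\Omega$-orthogonal projection onto $\SS^p_D(\TT_\ell)$ and $\widetilde u_\ell^0\in H^1_D(\Omega)$ solves the corresponding continuous shifted problem. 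Continuous dependence on data yields $\widetilde u_\ell^0 \to u_\infty^0$ in $H^1_D(\Omega)$ as $L_\ell\to L_\infty$. Lemma~\ref{lemma:apriori:orthogonal} applied to the nested closed subspaces $\SS^p_D(\TT_\ell)$ of the Hilbert space $(H^1_D(\Omega),\dual{\nabla\cdot}{\nabla\cdot}_\Omega)$ gives $P_\ell^0 u_\infty^0 \to P_\infty^0 u_\infty^0$, and the triangle inequality
\begin{align*}
 \norm{U_\ell^0 - P_\infty^0 u_\infty^0}{H^1(\Omega)}
 \le \norm{P_\ell^0(\widetilde u_\ell^0 - u_\infty^0)}{H^1(\Omega)} + \norm{P_\ell^0 u_\infty^0 - P_\infty^0 u_\infty^0}{H^1(\Omega)}
\end{align*}
together with uniform boundedness of $P_\ell^0$ shows $U_\ell = U_\ell^0 + L_\ell \to P_\infty^0 u_\infty^0 + L_\infty =: U_\infty$ in $H^1(\Omega)$.

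The hardest step is the construction of $L_\ell$: one needs a lifting that simultaneously reproduces $\P_\ell g$ exactly on $\Gamma_D$ (so that $U_\ell - L_\ell$ lies in the nested space $\SS^p_D(\TT_\ell)$) and converges in $H^1(\Omega)$. The nested composition $\sz_\ell\LL\sz_\ell$ is essential: the inner $\sz_\ell$ discretizes the trace so that $\LL$ delivers a function whose trace is already discrete and hence preserved by the outer $\sz_\ell$. Passing to the limit in this composition forces the use of the delicate Proposition~\ref{prop:apriori:scottzhang}, since $\sz_\ell\sz_k\neq\sz_\ell$ precludes the shortcut via Lemma~\ref{lemma:apriori:orthogonal} that would otherwise apply to an orthogonal projection.
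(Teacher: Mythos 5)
Your proposal is correct and follows essentially the same route as the paper's proof: extensions $\widehat g_\ell\to\widehat g_\infty$ in $H^{1/2}(\Gamma)$ (the paper takes the harmonic extensions $w_\ell|_\Gamma$), a discrete lifting built from $\sz_\ell\LL(\cdot)$ whose convergence rests on Proposition~\ref{prop:apriori:scottzhang}, and then nestedness of the test spaces $\SS^p_D(\TT_\ell)$ via Lemma~\ref{lemma:apriori:orthogonal} combined with stability of the shifted discrete problems. The only (cosmetic) deviation is the inner $\sz_\ell$ in $L_\ell=\sz_\ell\LL\sz_\ell\widehat g_\ell$, which is superfluous since $\widehat g_\ell|_{\Gamma_D}=\P_\ell g$ is already discrete (the paper simply uses $\sz_\ell\LL\widehat g_\ell$); if you keep it, $\LL\widehat g_\infty$ is not quite the fixed function to which Proposition~\ref{prop:apriori:scottzhang} applies, and you need one extra limiting step for $\sz_\ell\widehat g_\infty$ first.
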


\begin{proof}
We consider the continuous auxiliary problem
\begin{align*}
 -\Delta w_\ell&=0 \quad\text{in }\Omega,\\
 w_\ell &= \P_\ell g\quad\text{on }\Gamma_D,\\
 \partial_nw_\ell &=0 \quad\text{on }\Gamma_N.
\end{align*}
Let $w_\ell\in H^1(\Omega)$ be the unique (weak) solution and note that the trace $\widehat g_\ell:=w_\ell|_\Gamma \in H^{1/2}(\Gamma)$ provides an extension of $\P_\ell g$ with
\begin{align*}
 \norm{\widehat g_\ell}{H^{1/2}(\Gamma)}
 \le \norm{w_\ell}{H^1(\Omega)}
 \lesssim \norm{\P_\ell g}{H^{1/2}(\Gamma_D)}
 \le \norm{\widehat g_\ell}{H^{1/2}(\Gamma)}.
\end{align*}
For arbitrary $k,\ell\in\N$, the same type of arguments proves
\begin{align*}
 \norm{\widehat g_\ell-\widehat g_k}{H^{1/2}(\Gamma)}
 \simeq \norm{(\P_\ell-\P_k)g}{H^{1/2}(\Gamma_D)}.
\end{align*}
According to Assumption~\eqref{eq:apriori:dirichlet},
$(\P_\ell g)$ is a Cauchy sequence in $H^{1/2}(\Gamma_D)$. Therefore, $(\widehat g_\ell)$ is a Cauchy sequence in $H^{1/2}(\Gamma)$, whence convergent with limit $\widehat g_\infty\in H^{1/2}(\Gamma)$.
Next, note that $(\sz_\ell\LL \widehat g_\ell)|_{\Gamma_D} = \P_\ell g$,
where $\LL:H^{1/2}(\Gamma)\to H^1(\Omega)$ denotes
some lifting operator. Therefore, $\widetilde U_\ell := U_\ell - \sz_\ell\LL \widehat g_\ell \in \SS^p_D(\TT_\ell)$ is the unique solution of the variational formulation
\begin{align}
\label{eq1:weak3d}
 \dual{\nabla\widetilde U_\ell}{\nabla V_\ell}_\Omega=\dual{\nabla u}{\nabla V_\ell}_\Omega-\dual{\nabla\sz_\ell\LL \widehat g_\ell}{\nabla V_\ell}_\Omega \quad \text{ for all } V_\ell \in \SS^p_D(\TT_\ell).
\end{align}
Finally, we need to show that $\widetilde U_\ell$ and $\sz_\ell\LL \widehat g_\ell$ are convergent to conclude convergence of $U_\ell = \widetilde U_\ell + \sz_\ell\LL \widehat g_\ell$.

With convergence of $(\widehat g_\ell)$ to $\widehat g_\infty$ and Proposition~\ref{prop:apriori:scottzhang}, we obtain
\begin{align*}
 \norm{\sz_\ell\LL \widehat g_\ell - \sz_\infty \LL \widehat g_\infty}{H^1(\Omega)}
 &\le \norm{\sz_\ell(\LL \widehat g_\ell - \LL \widehat g_\infty)}{H^1(\Omega)}
 + \norm{\sz_\ell \LL \widehat g_\infty - \sz_\infty \LL \widehat g_\infty}{H^1(\Omega)}\\
 &\lesssim
 \norm{\widehat g_\ell - \widehat g_\infty}{H^{1/2}(\Gamma)}
 +\norm{\sz_\ell \LL \widehat g_\infty - \sz_\infty \LL \widehat g_\infty}{H^1(\Omega)}
 \xrightarrow{\ell\to\infty} 0.
\end{align*}
This proves convergence of $\sz_\ell\LL \widehat g_\ell$ to $\sz_\infty\LL\widehat g_\infty$ as $\ell\to\infty$. To see convergence of $\widetilde U_\ell$, let $\widetilde U_{\ell,\infty} \in \SS^p_D(\TT_\ell)$ be the unique solution of the discrete auxiliary problem
\begin{align}
\label{eq2:weak3d}
\dual{\nabla\widetilde U_{\ell,\infty}}{\nabla V_\ell}_\Omega=\dual{\nabla u}{\nabla V_\ell}_\Omega-\dual{\nabla \sz_\infty\LL \widehat g_\infty}{\nabla V_\ell}_\Omega
\quad \text{ for all } V_\ell \in \SS^p_D(\TT_\ell).
\end{align}
Due to the nestedness of the ansatz spaces $\SS^p_D(\TT_\ell)$, Lemma~\ref{lemma:apriori:orthogonal} predicts a priori convergence $\widetilde U_{\ell,\infty} \xrightarrow{\ell\to\infty} \widetilde U_\infty\in H^1(\Omega_D)$.
With the stability of~\eqref{eq1:weak3d} and~\eqref{eq2:weak3d}, we obtain
\begin{align*}
 \norm{\nabla(\widetilde U_{\ell,\infty} - \widetilde U_{\ell})}{L^2(\Omega)} \lesssim \norm{\sz_\ell\LL \widehat g_\ell- \sz_\infty \LL \widehat g_\infty}{H^1(\Omega)} \xrightarrow{\ell\to\infty} 0,
\end{align*}
and therefore $\widetilde U_\ell \xrightarrow{\ell\to\infty} \widetilde U_\infty$ in $H^1(\Omega_D)$.

Finally, we now conclude
\begin{align*}
 U_\ell = \widetilde U_\ell + \sz_\ell\LL \widehat g_\ell \xrightarrow{\ell\to\infty} \widetilde U_\infty + \sz_\infty \LL \widehat g_\infty =:U_\infty \in H^1(\Omega),
\end{align*}
which concludes the proof.
\end{proof}

\subsection{Proof of convergence theorem (Theorem~\ref{theorem:convergence})}
(i) Since the limit $U_\infty = \lim_\ell U_\ell$ exists in $H^1(\Omega)$, we infer $\lim_\ell\norm{\nabla(U_{\ell+1}-U_\ell)}{L^2(\Omega)}=0$.
In view of this and Lemma~\ref{lemma:doerfler1}, the estimator reduction estimate~\eqref{eq:reduction} takes the form
\begin{align*}
 \eta_{\ell+1}^2
 \le \q{reduction}\,\eta_{\ell}^2
 + \alpha_\ell
 \quad\text{for all }\ell\ge0
\end{align*}
with some non-negative $\alpha_\ell\ge0$ such that $\lim_\ell\alpha_\ell=0$, i.e.\ the estimator is contractive up to a non-negative zero sequence. It is a consequence of elementary calculus that $\lim_\ell\eta_\ell=0$, see e.g.\ \cite[Lemma~2.3]{afp}. Finally, reliability $\norm{u-U_\ell}{H^1(\Omega)} \lesssim \eta_\ell$ thus concludes the proof.

(ii) The verification of Assumption~\eqref{eq:apriori:dirichlet} is done in
Corollary~\ref{cor:apriori:scottzhang} for the Scott-Zhang projection,
Corollary~\ref{cor:apriori:H12} for the $H^{1/2}(\Gamma_D)$-orthogonal projection,
and Corollary~\ref{cor:apriori:L2} for the $L^2(\Gamma_D)$-orthogonal projection.
\qed

\section{Contraction}
\label{section:contraction}%

\noindent%
In principle, the convergence rate of $\lim_\ell U_\ell = u$ from Theorem~\ref{theorem:convergence} could be slow. Moreover, Theorem~\ref{theorem:convergence} restricts the Dirichlet projection $\P_\ell$ by Assumption~\eqref{eq:apriori:dirichlet}. In this section, we aim to show linear convergence for some quasi-error quantity $\Delta_\ell \simeq \eta_\ell^2 = \varrho_\ell^2 + \oscD\ell^2$ with respect to the step $\ell$ of Algorithm~\ref{algorithm2} and independently of the projection $\P_\ell$ chosen. The essential observation is that the marking step in Algorithm~\ref{algorithm2} is in some sense independent of the $\P_\ell$ chosen.

\subsection{Implicit D\"orfler marking}
Let $\widetilde U_\ell\in\SS^p(\TT_\ell)$ be a Galerkin solution of~\eqref{eq:galerkin} with different Dirichlet data $\widetilde U_\ell = P_\ell g$ on $\Gamma_D$, where $P_\ell:H^{1/2}(\Gamma_D)\to\SS^p(\EE_\ell^D)$ is a uniformly stable projection onto $\SS^p(\EE_\ell^D)$ in the sense of~\eqref{eq:stability}. Let $\widetilde\eta_\ell^{\,2} = \widetilde\varrho_\ell^{\,2} + \oscD\ell^2$ be the associated error estimator. In the following, we prove that marking in Algorithm~\ref{algorithm2} with $\eta_\ell^2 = \varrho_\ell^2 + \oscD\ell^2$ and sufficiently small $0<\vartheta<1$ implicitly implies the simple D\"orfler marking~\eqref{eq:doerfler} for $\widetilde\eta_\ell$.

\begin{lemma}[local equivalence of error estimators for different projections]
\label{lemma:equivalence}%
For arbitrary $\UU_\ell\subseteq\TT_\ell$, it holds that
\begin{align}\label{eq:equivalence2}
 \c{equivalence}^{-1}\sum_{T\in\UU_\ell}\varrho_\ell(T)^2
 \le \sum_{T\in\UU_\ell}\widetilde\varrho_\ell(T)^2 + \oscD\ell^2
 \quad\text{and}\quad
 \c{equivalence}^{-1}\sum_{T\in\UU_\ell}\widetilde\varrho_\ell(T)^2
 \le \sum_{T\in\UU_\ell}\varrho_\ell(T)^2 + \oscD\ell^2.
\end{align}
The constant $\setc{equivalence}>1$ depends only on shape regularity of $\TT_\ell$ and on $\c{stability}>0$. In particular, this implies equivalence \begin{align}
 (\c{equivalence}+1)^{-1}\,\eta_\ell^2 \le \widetilde\eta_\ell^{\,2}
 \le (\c{equivalence}+1)\,\eta_\ell^2.
\end{align}
\end{lemma}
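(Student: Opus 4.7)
The plan is to reduce the difference $\varrho_\ell(T)^2-\widetilde\varrho_\ell(T)^2$ to a discrete $H^1$-difference $\|\nabla(U_\ell-\widetilde U_\ell)\|_{L^2(\omega_\ell(T))}^2$ by elementwise triangle/Young and inverse estimates, and then to control this discrete difference globally by the Dirichlet oscillations via Corollary~\ref{cor:oscD}.

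First, I would apply a triangle inequality followed by Young's inequality to each of the three terms defining $\varrho_\ell(T)^2$ in~\eqref{eq2:estimator2:T}, writing $f+\Delta U_\ell = (f+\Delta \widetilde U_\ell)+\Delta(U_\ell-\widetilde U_\ell)$ and analogously splitting the normal-jump and Neumann contributions. This yields, for any fixed $T$,
\begin{align*}
 \varrho_\ell(T)^2
 \le 2\,\widetilde\varrho_\ell(T)^2
 + 2\bigl(|T|^{2/d}\|\Delta(U_\ell-\widetilde U_\ell)\|_{L^2(T)}^2
 + |T|^{1/d}\|[\partial_n(U_\ell-\widetilde U_\ell)]\|_{L^2(\partial T\cap\Omega)}^2
 + |T|^{1/d}\|\partial_n(U_\ell-\widetilde U_\ell)\|_{L^2(\partial T\cap\Gamma_N)}^2\bigr).
\end{align*}
Since $U_\ell-\widetilde U_\ell$ is a piecewise polynomial of fixed degree, a standard inverse/scaling argument on each element (only depending on shape regularity and $p$) bounds the second group by $\|\nabla(U_\ell-\widetilde U_\ell)\|_{L^2(\omega_\ell(T))}^2$, where $\omega_\ell(T)$ is the element patch. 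Summing over $T\in\UU_\ell$ and using that shape regularity controls the overlap of element patches, we arrive at
\begin{align*}
 \sum_{T\in\UU_\ell}\varrho_\ell(T)^2
 \lesssim \sum_{T\in\UU_\ell}\widetilde\varrho_\ell(T)^2
 + \|\nabla(U_\ell-\widetilde U_\ell)\|_{L^2(\Omega)}^2.
\end{align*}

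Second, I would bound the discrete difference by the trace defect. Subtracting the two Galerkin formulations~\eqref{eq:galerkin} shows that $U_\ell-\widetilde U_\ell$ solves a discrete homogeneous problem with the nontrivial trace $(\P_\ell-P_\ell)g$ on $\Gamma_D$. Proceeding exactly as in the proof of the C\'ea-type estimate (Proposition~\ref{prop:cea}) — i.e.\ picking a continuous lifting $\LL(\widehat{\P_\ell g}-\widehat{P_\ell g})$ of this trace difference, applying the Scott-Zhang projection $\sz_\ell$ to obtain a discrete lifting in $\SS^p(\TT_\ell)$ with the prescribed Dirichlet trace, and invoking Galerkin orthogonality together with $H^{1/2}$-stability of both $\sz_\ell$ and $\LL$ — gives
\begin{align*}
 \|\nabla(U_\ell-\widetilde U_\ell)\|_{L^2(\Omega)}
 \lesssim \|(\P_\ell-P_\ell)g\|_{H^{1/2}(\Gamma_D)}.
\end{align*}

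Third, since both $\P_\ell$ and $P_\ell$ are uniformly $H^{1/2}(\Gamma_D)$-stable, Corollary~\ref{cor:oscD} together with the triangle inequality yields
\begin{align*}
 \|(\P_\ell-P_\ell)g\|_{H^{1/2}(\Gamma_D)}
 \le \|(1-\P_\ell)g\|_{H^{1/2}(\Gamma_D)}+\|(1-P_\ell)g\|_{H^{1/2}(\Gamma_D)}
 \lesssim \oscD{\ell}.
\end{align*}
Combining the three steps proves the first inequality in~\eqref{eq:equivalence2}, and exchanging the roles of $\varrho_\ell$ and $\widetilde\varrho_\ell$ (the argument is symmetric) proves the second. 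The global equivalence $(\c{equivalence}+1)^{-1}\eta_\ell^2\le\widetilde\eta_\ell^{\,2}\le(\c{equivalence}+1)\eta_\ell^2$ then follows by choosing $\UU_\ell=\TT_\ell$, adding the common term $\oscD\ell^2$ on both sides, and using $\oscD\ell^2\le\eta_\ell^2$ and $\oscD\ell^2\le\widetilde\eta_\ell^{\,2}$ to absorb the correction.

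The main obstacle is the second step: producing the clean discrete-stability bound $\|\nabla(U_\ell-\widetilde U_\ell)\|_{L^2(\Omega)}\lesssim\|(\P_\ell-P_\ell)g\|_{H^{1/2}(\Gamma_D)}$ requires a careful Scott-Zhang-based discrete lifting argument that respects both discrete trace data simultaneously. Once this is in place, everything else reduces to well-known inverse estimates and the approximation result of Corollary~\ref{cor:oscD}.
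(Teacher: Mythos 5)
Your proposal follows essentially the same route as the paper's proof: elementwise triangle/Young plus inverse estimates to reduce to $\norm{\nabla(U_\ell-\widetilde U_\ell)}{L^2(\Omega)}^2$, then Galerkin orthogonality with the test function $(U_\ell-\widetilde U_\ell)-\sz_\ell\LL\widehat g$ (using that Scott-Zhang preserves the discrete trace $(\P_\ell-P_\ell)g$) and stability of $\sz_\ell$ and $\LL$ to get $\norm{\nabla(U_\ell-\widetilde U_\ell)}{L^2(\Omega)}\lesssim\norm{(\P_\ell-P_\ell)g}{H^{1/2}(\Gamma_D)}$, and finally Corollary~\ref{cor:oscD} to bound this by $\oscD\ell$. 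The argument, including the concluding equivalence of $\eta_\ell$ and $\widetilde\eta_\ell$, is correct and matches the paper.
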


\begin{proof}
Arguing as for the estimator reduction, it follows from the triangle inequality and scaling arguments that
\begin{align*}
 \varrho_\ell(T)^2
 \lesssim \widetilde\varrho_\ell(T)^2
 + \norm{\nabla(U_\ell-\widetilde U_\ell)}{L^2(\omega_\ell(T))}^2
 \quad\text{for all }T\in\TT_\ell,
\end{align*}
where $\omega_\ell(T) = \bigcup\set{T'\in\TT_\ell}{T'\cap T\neq\emptyset}$ denotes the element patch of $T$. Consequently, a rough estimate gives
\begin{align*}
 \sum_{T\in\UU_\ell}\varrho_\ell(T)^2
 \lesssim \sum_{T\in\UU_\ell}\widetilde\varrho_\ell(T)^2
 + \norm{\nabla(U_\ell-\widetilde U_\ell)}{L^2(\Omega)}^2
 \quad\text{for all }\UU_\ell\subseteq\TT_\ell.
\end{align*}
Recall the Galerkin orthogonality
\begin{align*}
 \dual{\nabla(U_\ell-\widetilde U_\ell)}{\nabla V_\ell}
 = \dual{\nabla(u-\widetilde U_\ell)}{\nabla V_\ell}
 - \dual{\nabla(u-U_\ell)}{\nabla V_\ell} = 0
 \quad\text{for all }V_\ell\in\SS^p_D(\TT_\ell).
\end{align*}
Let $\widehat g\in H^{1/2}(\Gamma)$ be an arbitrary extension of $(U_\ell-\widetilde U_\ell)|_{\Gamma_D}=(\P_\ell-P_\ell)g \in H^{1/2}(\Gamma_D)$. We choose the test function $V_\ell = (U_\ell-\widetilde U_\ell) - \sz_\ell\LL\widehat g\in\SS^1_D(\TT_\ell)$ to see
\begin{align*}
 \norm{\nabla(U_\ell-\widetilde U_\ell)}{L^2(\Omega)}^2
 = \dual{\nabla(U_\ell-\widetilde U_\ell)}{\nabla\sz_\ell\LL\widehat g}_\Omega.
\end{align*}
Stability of Scott-Zhang projection $\sz_\ell$ and lifting operator $\LL$ thus give
\begin{align*}
 \norm{\nabla(U_\ell-\widetilde U_\ell)}{L^2(\Omega)}
 \le \norm{\nabla\sz_\ell\LL\widehat g}{L^2(\Omega)}
 \lesssim \norm{\widehat g}{H^{1/2}(\Gamma)}.
\end{align*}
Since $\widehat g$ was an arbitrary extension of $(\P_\ell-P_\ell)g$, we end up with
\begin{align*}
 \norm{\nabla(U_\ell-\widetilde U_\ell)}{L^2(\Omega)}
 \lesssim \norm{(\P_\ell-P_\ell)g}{H^{1/2}(\Gamma_D)}
 &\le \norm{(\P_\ell-1)g}{H^{1/2}(\Gamma_D)}
 + \norm{(1-P_\ell)g}{H^{1/2}(\Gamma_D)}
 \\&
 \lesssim\oscD\ell,
\end{align*}
where we have used Corollary~\ref{cor:oscD}.
This proves the first estimate in~\eqref{eq:equivalence2}, and the second follows with the same arguments.
\end{proof}

The following lemma is the main reason, why we stick with Stevenson's modified D\"orfler marking~\eqref{eq:doerfler:res}--\eqref{eq:doerfler:osc} instead of simple D\"orfler marking~\eqref{eq:doerfler}.

\begin{lemma}[modified D\"orfler marking implies D\"orfler marking for different projection]
\label{lemma:doerfler2}%
For arbitrary $0<\theta_1,\theta_2<1$ and sufficiently small $0<\vartheta<1$, there is some $0<\theta<1$ such that the marking criterion~\eqref{eq:doerfler:res}--\eqref{eq:doerfler:osc} for $\eta_\ell^2 = \varrho_\ell^2 + \oscD\ell^2$ implies the D\"orfler marking
\begin{align}\label{eq:doerfler2}
 \theta\,\widetilde\eta_\ell^{\,2}
 \le \sum_{T\in\MM_\ell}\widetilde\eta_\ell(T)^2
\end{align}
for $\widetilde\eta_\ell^{\,2} = \widetilde\varrho_\ell^{\,2}+\oscD\ell^2$. The parameter $0<\theta<1$ depends on $0<\theta_1,\theta_2,\vartheta<1$ and on $\c{equivalence}>0$.
\end{lemma}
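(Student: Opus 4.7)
The plan is to split the argument along the case distinction inside Algorithm~\ref{algorithm2} and, in each branch, combine the local equivalence of $\varrho_\ell$ and $\widetilde\varrho_\ell$ (Lemma~\ref{lemma:equivalence}) with an upper bound on $\widetilde\eta_\ell^{\,2}$ obtained from the global equivalence $\widetilde\eta_\ell^{\,2} \le (\c{equivalence}+1)\,\eta_\ell^2$.

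\textbf{Case 1: $\oscD\ell^2 \le \vartheta\,\varrho_\ell^2$.} Here step~(iii) produces $\MM_\ell\subseteq\TT_\ell$ with $\theta_1\,\varrho_\ell^2 \le \sum_{T\in\MM_\ell}\varrho_\ell(T)^2$. Applying the second inequality of~\eqref{eq:equivalence2} to the set $\UU_\ell=\MM_\ell$ (with the roles of $\varrho_\ell$ and $\widetilde\varrho_\ell$ interchanged) gives
\begin{align*}
 \sum_{T\in\MM_\ell}\widetilde\eta_\ell(T)^2
 \ge \sum_{T\in\MM_\ell}\widetilde\varrho_\ell(T)^2
 \ge \c{equivalence}^{-1}\sum_{T\in\MM_\ell}\varrho_\ell(T)^2 - \oscD\ell^2
 \ge \bigl(\c{equivalence}^{-1}\theta_1 - \vartheta\bigr)\,\varrho_\ell^2.
\end{align*}
Using the case hypothesis and the global equivalence, $\widetilde\eta_\ell^{\,2}\le(\c{equivalence}+1)\,\eta_\ell^2\le(\c{equivalence}+1)(1+\vartheta)\,\varrho_\ell^2$, so that
\begin{align*}
 \sum_{T\in\MM_\ell}\widetilde\eta_\ell(T)^2
 \ge \frac{\c{equivalence}^{-1}\theta_1-\vartheta}{(\c{equivalence}+1)(1+\vartheta)}\,\widetilde\eta_\ell^{\,2}
 =: \theta'\,\widetilde\eta_\ell^{\,2}.
\end{align*}
Provided that $\vartheta<\theta_1/\c{equivalence}$, the constant $\theta'$ is strictly positive. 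This is the only smallness condition on $\vartheta$ needed, and it is what makes the lemma hinge on Stevenson's modification rather than plain D\"orfler marking.

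\textbf{Case 2: $\oscD\ell^2 > \vartheta\,\varrho_\ell^2$.} Here step~(iv) yields $\MM_\ell^D\subseteq\EE_\ell^D$ with $\theta_2\,\oscD\ell^2\le\sum_{E\in\MM_\ell^D}\oscD\ell(E)^2$, and $\MM_\ell$ collects all elements that contain some $E\in\MM_\ell^D$. Since $\widetilde\eta_\ell(T)^2$ includes the edge contributions $|T|^{1/d}\norm{(1-\Pi_\ell)\nabla_\Gamma g}{L^2(\partial T\cap\Gamma_D)}^2$, dropping the volumetric terms gives directly
\begin{align*}
 \sum_{T\in\MM_\ell}\widetilde\eta_\ell(T)^2
 \ge \sum_{E\in\MM_\ell^D}\oscD\ell(E)^2
 \ge \theta_2\,\oscD\ell^2.
\end{align*}
Combining with $\widetilde\eta_\ell^{\,2}\le(\c{equivalence}+1)(\varrho_\ell^2+\oscD\ell^2)<(\c{equivalence}+1)(1+\vartheta^{-1})\,\oscD\ell^2$ produces
\begin{align*}
 \sum_{T\in\MM_\ell}\widetilde\eta_\ell(T)^2
 \ge \frac{\theta_2}{(\c{equivalence}+1)(1+\vartheta^{-1})}\,\widetilde\eta_\ell^{\,2}
 =: \theta''\,\widetilde\eta_\ell^{\,2}.
\end{align*}

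\textbf{Conclusion.} Setting $\theta:=\min\{\theta',\theta''\}>0$ yields~\eqref{eq:doerfler2} in both cases. The only genuine obstacle is the positivity of $\theta'$: the oscillation term absorbed by the local equivalence must not swallow the D\"orfler bulk in Case~1, forcing the smallness condition $\vartheta<\theta_1/\c{equivalence}$ on the adaptivity parameter. Case~2 is straightforward because $\oscD\ell$ appears as part of both $\eta_\ell$ and $\widetilde\eta_\ell$, so no local equivalence estimate is needed there.\qed
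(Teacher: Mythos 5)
Your proposal is correct and follows essentially the same route as the paper's proof: the case split according to Stevenson's marking, the local estimator equivalence of Lemma~\ref{lemma:equivalence} combined with the case hypothesis $\oscD\ell^2\le\vartheta\varrho_\ell^2$ in the first case, the direct use of the shared Dirichlet oscillations in the second case, and the global bound $\widetilde\eta_\ell^{\,2}\le(\c{equivalence}+1)\eta_\ell^2$ to pass to $\widetilde\eta_\ell^{\,2}$. The resulting constants differ only cosmetically (your condition $\vartheta<\theta_1/\c{equivalence}$ matches the paper's ``$\vartheta$ sufficiently small compared to $\theta_1$''), so no changes are needed.
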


\begin{proof}
We argue as in the proof of Lemma~\ref{lemma:doerfler1}.
First, assume $\oscD\ell^2 \le \vartheta\,\varrho_\ell^2$ and let $\MM_\ell\subseteq\TT_\ell$ satisfy~\eqref{eq:doerfler:res}. According to
Lemma~\ref{lemma:equivalence}, we see
\begin{align*}
 \theta_1\,\eta_\ell^2
 \le \theta_1(1+\vartheta)\varrho_\ell^2
 \le (1+\vartheta)\sum_{T\in\MM_\ell}\varrho_\ell(T)^2
 &\le\c{equivalence} (1+\vartheta)\Big(
 \sum_{T\in\MM_\ell}\widetilde\varrho_\ell(T)^2 + \oscD\ell^2\Big)
 \\&\le\c{equivalence} (1+\vartheta)\Big(\sum_{T\in\MM_\ell}\widetilde\varrho_\ell(T)^2 + \vartheta\,\varrho_\ell^2\Big).
\end{align*}
This proves
\begin{align*}
 \big(\theta_1\c{equivalence}^{-1}(1+\vartheta)^{-1}-\vartheta\big)
 \,\eta_\ell^2
 \le \sum_{T\in\MM_\ell}\widetilde\varrho_\ell(T)^2.
\end{align*}
Together with $(\c{equivalence}+1)^{-1}\,\widetilde\eta_\ell^{\,2}\le \eta_\ell^2$, we thus obtain the D\"orfler marking~\eqref{eq:doerfler2} with
$0<\theta\le (\c{equivalence}+1)^{-1}\big(\theta_1\c{equivalence}^{-1}(1+\vartheta)^{-1} -\vartheta\big)<1$, provided that $0<\vartheta<1$ is sufficiently small
compared to $0<\theta_1<1$.

Second, assume $\oscD\ell^2 > \vartheta\,\varrho_\ell^2$ and let $\MM_\ell^D\subseteq\EE_\ell^D$ satisfy~\eqref{eq:doerfler:osc}.
Then,
\begin{align*}
 \theta_2\,\eta_\ell^2
 \le \theta_2(1+\vartheta^{-1})\oscD\ell^2
 \le (1+\vartheta^{-1})\sum_{E\in\MM_\ell^D}\oscD\ell(E)^2
 \le (1+\vartheta^{-1})\sum_{T\in\MM_\ell}\widetilde\eta_\ell(T)^2,
\end{align*}
where $\MM_\ell = \set{T\in\TT_\ell}{\exists E\in\MM_\ell^D\quad
E\subset\partial T}$ is defined in step~(iv) of Algorithm~\ref{algorithm2}.
As before $(\c{equivalence}+1)^{-1}\,\widetilde\eta_\ell^{\,2} \le \eta_\ell^2$ thus proves~\eqref{eq:doerfler} with
$0<\theta \le \c{equivalence}^{-1}\theta_2(1+\vartheta^{-1})^{-1}<1$.
\end{proof}

\subsection{Quasi-Pythagoras theorem}
To prove Theorem~\ref{theorem:contraction}, we consider a \emph{theoretical} auxiliary problem: Throughout the remainder of Section~\ref{section:contraction}, $\widetilde U_\ell\in\SS^p(\TT_\ell)$ denotes the Galerkin solution of~\eqref{eq:galerkin} with Dirichlet data $\widetilde U_\ell = P_\ell g$ on $\Gamma_D$, where $P_\ell:H^{1/2}(\Gamma_D)\to\SS^p(\EE_\ell^D)$ denotes the $H^{1/2}(\Gamma_D)$-orthogonal projection. Associated with $\widetilde U_\ell$ is the error estimator $\widetilde\eta_\ell^{\,2} = \widetilde\varrho_\ell^{\,2}+\oscD\ell^2$, where $\widetilde\varrho_\ell$ is defined in~\eqref{eq2:estimator2:T} with $U_\ell$ replaced by $\widetilde U_\ell$.

Recall that the aforegoing statements of Section~\ref{section:scottzhang} and Section~\ref{section:convergence} hold for \emph{any} uniformly $H^{1/2}(\Gamma_D)$-stable projection $\P_\ell$ and thus apply to $\widetilde\eta_\ell^{\,2} = \widetilde\varrho_\ell^{\,2}+\oscD\ell^2$. We shall need reliability $\norm{u-\widetilde U_\ell}{H^1(\Omega)}^2 \lesssim \widetilde\eta_\ell^{\,2}$ as well as the estimator reduction~\eqref{eq:reduction} from Proposition~\ref{prop:reduction} for $\widetilde\eta_\ell^{\,2}$, which is a consequence of Lemma~\ref{lemma:doerfler2}. Our concept of proof of Theorem~\ref{theorem:contraction} goes back to~\cite[Proof of Theorem~4.1]{ckns}. Therein, however, the proof relies on the Pythagoras theorem $\norm{\nabla(u-U_\ell)}{L^2(\Omega)}^2 = \norm{\nabla(u-U_{\ell+1})}{L^2(\Omega)}^2 + \norm{\nabla(U_{\ell+1}-U_\ell)}{L^2(\Omega)}^2$ which does \emph{not} hold in case of inhomogeneous Dirichlet data and $\P_\ell g\neq\P_{\ell+1}g$, in general. Instead, we rely on a perturbed Pythagoras theorem which will be used for the auxiliary problem.

\begin{lemma}[quasi-Pythagoras theorem]
\label{lemma:pythagoras}
Let $\TT_\star = \refine(\TT_\ell)$ be an arbitrary refinement of $\TT_\ell$ with the associated auxiliary solution $\widetilde U_\star \in \SS^p(\TT_\star)$, where $\widetilde U_\star = P_\star g$ on $\Gamma_D$. Then, \begin{align}\label{eq:pythagoras2}
\begin{split}
 (1-\alpha)\,&\norm{\nabla(u-\widetilde U_\star)}{L^2(\Omega)}^2
 \le \norm{\nabla(u-\widetilde U_\ell)}{L^2(\Omega)}^2
 - \norm{\nabla(\widetilde U_\star - \widetilde U_\ell)}{L^2(\Omega)}^2
 \\&\qquad
 + \alpha^{-1}\,\c{pythagoras}\,\norm{(P_\star-P_\ell)g}{H^{1/2}(\Gamma_D)}^2
\end{split}
\end{align}
for all $\alpha>0$.
%
The constant $\setc{pythagoras}>0$ depends only on the shape regularity of $\sigma(\TT_\ell)$ and $\sigma(\TT_\star)$ and on $\Omega$ and $\Gamma_D$.
\end{lemma}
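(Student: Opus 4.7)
The plan is to start from the identity $u - \widetilde U_\ell = (u - \widetilde U_\star) + (\widetilde U_\star - \widetilde U_\ell)$ and expand the squared $H^1$-seminorm, which gives
\begin{align*}
 \norm{\nabla(u-\widetilde U_\star)}{L^2(\Omega)}^2
 = \norm{\nabla(u-\widetilde U_\ell)}{L^2(\Omega)}^2
 - \norm{\nabla(\widetilde U_\star - \widetilde U_\ell)}{L^2(\Omega)}^2
 - 2\,\dual{\nabla(u-\widetilde U_\star)}{\nabla(\widetilde U_\star - \widetilde U_\ell)}_\Omega.
\end{align*}
In the homogeneous Dirichlet setting the cross term vanishes by Galerkin orthogonality, because $\widetilde U_\star - \widetilde U_\ell\in\SS^p_D(\TT_\star)$. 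Here it does not, since $(\widetilde U_\star-\widetilde U_\ell)|_{\Gamma_D} = (P_\star-P_\ell)g$ is in general nonzero. Bounding this cross term is the main obstacle.

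To deal with it, I would use the same lifting trick as in the proof of Proposition~\ref{prop:cea}. Pick an arbitrary extension $\widehat g \in H^{1/2}(\Gamma)$ of $(P_\star - P_\ell)g$, let $\LL:H^{1/2}(\Gamma)\to H^1(\Omega)$ be a continuous lifting, and form $\sz_\star \LL \widehat g \in \SS^p(\TT_\star)$. Since $\TT_\star = \refine(\TT_\ell)$ guarantees $\SS^p(\TT_\ell)\subseteq\SS^p(\TT_\star)$, we have $\widetilde U_\star - \widetilde U_\ell\in\SS^p(\TT_\star)$, and by the trace-preserving property of $\sz_\star$ the difference
\begin{align*}
 (\widetilde U_\star - \widetilde U_\ell) - \sz_\star \LL \widehat g
\end{align*}
lies in $\SS^p_D(\TT_\star)$. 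The Galerkin orthogonality for $\widetilde U_\star$ with respect to $\SS^p_D(\TT_\star)$ therefore gives
\begin{align*}
 \dual{\nabla(u-\widetilde U_\star)}{\nabla(\widetilde U_\star - \widetilde U_\ell)}_\Omega
 = \dual{\nabla(u-\widetilde U_\star)}{\nabla \sz_\star \LL \widehat g}_\Omega.
\end{align*}

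The next step is Cauchy--Schwarz followed by $H^1$-stability of $\sz_\star$ (estimate~\eqref{eq:scottzhangstability}) and continuity of $\LL$:
\begin{align*}
 |\dual{\nabla(u-\widetilde U_\star)}{\nabla \sz_\star \LL \widehat g}_\Omega|
 \lesssim \norm{\nabla(u-\widetilde U_\star)}{L^2(\Omega)}\,\norm{\widehat g}{H^{1/2}(\Gamma)}.
\end{align*}
Taking the infimum over all admissible extensions $\widehat g$ and using the graph norm definition~\eqref{eq:dp:norm2} replaces $\norm{\widehat g}{H^{1/2}(\Gamma)}$ by $\norm{(P_\star-P_\ell)g}{H^{1/2}(\Gamma_D)}$. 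Finally, Young's inequality $2ab\le \alpha a^2 + \alpha^{-1}b^2$ absorbs one factor of $\norm{\nabla(u-\widetilde U_\star)}{L^2(\Omega)}$ into the left-hand side, producing the factor $(1-\alpha)$ and the constant $\alpha^{-1}\c{pythagoras}$ with $\c{pythagoras}$ depending only on the Scott-Zhang and lifting constants, hence on $\Omega$, $\Gamma_D$, and the shape regularity. This yields~\eqref{eq:pythagoras2}.
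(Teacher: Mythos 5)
Your argument is correct and yields exactly \eqref{eq:pythagoras2}, but it is organized differently from the paper's proof. The paper does not correct the cross term directly: it first introduces an intermediate Galerkin solution $\widetilde U_\star^\ell\in\SS^p(\TT_\star)$ with the \emph{old} Dirichlet data $\widetilde U_\star^\ell|_{\Gamma_D}=P_\ell g$, uses the Galerkin orthogonality with $V_\star=\widetilde U_\star^\ell-\widetilde U_\ell\in\SS^p_D(\TT_\star)$ to replace $\dual{\nabla(u-\widetilde U_\star)}{\nabla(\widetilde U_\star-\widetilde U_\ell)}_\Omega$ by $\dual{\nabla(u-\widetilde U_\star)}{\nabla(\widetilde U_\star-\widetilde U_\star^\ell)}_\Omega$, applies Young, and then separately proves the bound $\norm{\nabla(\widetilde U_\star-\widetilde U_\star^\ell)}{L^2(\Omega)}\lesssim\norm{(P_\star-P_\ell)g}{H^{1/2}(\Gamma_D)}$ via the same lifting/Scott--Zhang/test-function trick you use. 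You instead observe that $(\widetilde U_\star-\widetilde U_\ell)-\sz_\star\LL\widehat g\in\SS^p_D(\TT_\star)$ --- valid because nestedness gives $(P_\star-P_\ell)g\in\SS^p(\EE_\star^D)$, so $\sz_\star$ reproduces this trace --- and pass directly from the cross term to $\dual{\nabla(u-\widetilde U_\star)}{\nabla\sz_\star\LL\widehat g}_\Omega$, then conclude by Cauchy--Schwarz, the infimum over extensions, and Young. Your route is slightly more streamlined (one auxiliary object fewer) and gives the same constant dependencies (Scott--Zhang stability, i.e.\ shape regularity, plus the lifting, i.e.\ $\Omega$ and $\Gamma_D$); the paper's detour has the advantage of isolating the estimate ``energy of the discrete solution-difference $\lesssim$ $H^{1/2}$-norm of the Dirichlet-data difference'' as a self-contained step, the same mechanism that reappears in Lemma~\ref{lemma:equivalence} and Proposition~\ref{prop:dlr}.
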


\begin{proof}
We recall the Galerkin orthogonality
\begin{align*}
 \dual{\nabla(u-\widetilde U_\star)}{\nabla V_\star}_\Omega = 0
 \quad\text{for all }V_\star\in\SS^p_D(\TT_\star).
\end{align*}
Now, let $\widetilde U_\star^\ell\in\SS^p(\TT_\star)$ be the unique Galerkin solution of~\eqref{eq:galerkin} with $\widetilde U_\star^\ell|_{\Gamma_D}=P_\ell g$. We use the Galerkin orthogonality with $V_\star = \widetilde U_\star^\ell-\widetilde U_\ell\in\SS^p_D(\TT_\star)$. This and the Young inequality allow to estimate the $L^2$-scalar product
\begin{align*}
 2|\dual{\nabla&(u-\widetilde U_\star)}{\nabla(\widetilde U_\star - \widetilde U_\ell)}_\Omega|
 = 2 |\dual{\nabla(u-\widetilde U_\star)}{\nabla(\widetilde U_\star - \widetilde U_\star^\ell)}_\Omega|
 \\&
 \le \alpha\norm{\nabla(u-\widetilde U_\star)}{L^2(\Omega)}^2
 + \alpha^{-1}\norm{\nabla(\widetilde U_\star - \widetilde U_\star^\ell)}{L^2(\Omega)}^2
\end{align*}
for all $\alpha>0$. To estimate the second contribution on the right-hand side, we choose an arbitrary extension $\widehat g\in H^{1/2}(\Gamma)$ of $(P_\star-P_\ell)g\in H^{1/2}(\Gamma_D)$. Then, we use the test function $V_\star = (\widetilde U_\star-\widetilde U_\star^\ell)-\sz_\star\LL\widehat g\in\SS^p_D(\TT_\star)$, where $\LL:H^{1/2}(\Gamma)\to H^1(\Omega)$ again denotes a lifting operator. Recall that the choice of the Scott-Zhang projection $\sz_\star$ guarantees that this function has zero Dirichlet data on $\Gamma_D$ since $\widehat g|_{\Gamma_D} = (P_\star-P_\ell)g\in\SS^p(\EE_\star^D)$. Now, the Galerkin orthogonalities
for $\widetilde U_\star,\widetilde U_\star^\ell\in\SS^p(\TT_\star)$ yield
\begin{align*}
 0 = \dual{\nabla(u-\widetilde U_\star^\ell)}{\nabla V_\star}_\Omega
 - \dual{\nabla(u-\widetilde U_\star)}{\nabla V_\star}_\Omega
 = \dual{\nabla(\widetilde U_\star-\widetilde U_\star^\ell)}{\nabla V_\star}_\Omega.
\end{align*}
By the above choice of $V_\star\in\SS^p_D(\TT_\star)$ and stability of $\sz_\star$ and $\LL$, this yields
\begin{align*}
 \norm{\nabla(\widetilde U_\star-\widetilde U_\star^\ell)}{L^2(\Omega)}^2
 = \dual{\nabla(\widetilde U_\star-\widetilde U_\star^\ell)}{\nabla \sz_\star\LL\widehat g}_\Omega
 \lesssim \norm{\nabla(\widetilde U_\star-\widetilde U_\star^\ell)}{L^2(\Omega)}
 \norm{\widehat g}{H^{1/2}(\Gamma)}.
\end{align*}
Since $\widehat g$ was an arbitrary extension of $(P_\star-P_\ell)g\in H^{1/2}(\Gamma_D)$ to $H^{1/2}(\Gamma)$, this yields
\begin{align*}
 \norm{\nabla(\widetilde U_\star-\widetilde U_\star^\ell)}{L^2(\Omega)}
 \lesssim \norm{(P_\star-P_\ell)g}{H^{1/2}(\Gamma_D)}.
\end{align*}
So far, we have thus derived
\begin{align*}
 2|\dual{\nabla(u-\widetilde U_\star)}{\nabla(\widetilde U_\star-\widetilde U_\ell)}_\Omega|
 \le \alpha \norm{\nabla(u-\widetilde U_\star)}{L^2(\Omega)}^2
 + \alpha^{-1}\,\c{pythagoras}\,\norm{(P_\star-P_\ell)g}{H^{1/2}(\Gamma_D)}^2,
\end{align*}
To verify~\eqref{eq:pythagoras2}, 
we use the identity
\begin{align*}
 &\norm{\nabla(u-\widetilde U_\ell)}{L^2(\Omega)}^2
 = \norm{\nabla\big((u-\widetilde U_\star)+(\widetilde U_\star - \widetilde U_\ell)\big)}{L^2(\Omega)}^2
 \\&\qquad
 = \norm{\nabla(u-\widetilde U_\star)}{L^2(\Omega)}^2
 + 2\dual{\nabla(u-\widetilde U_\star)}{\nabla(\widetilde U_\star - \widetilde U_\ell)}_\Omega
 + \norm{\nabla(\widetilde U_\star - \widetilde U_\ell)\big)}{L^2(\Omega)}^2.
\end{align*}
Rearranging the terms accordingly and use of the estimate for the scalar product, we conclude the proof.
\end{proof}

\subsection{Proof of contraction theorem (Theorem~\ref{theorem:contraction})}
Using the quasi-Pythagoras theorem~\eqref{eq:pythagoras2} with $\TT_\star=\TT_{\ell+1}$, we see
\begin{align*}
 (1-\alpha)\,\norm{\nabla(u-\widetilde U_{\ell+1})}{L^2(\Omega)}^2
 &\le \norm{\nabla(u-\widetilde U_{\ell})}{L^2(\Omega)}^2
 - \norm{\nabla(\widetilde U_{\ell+1}-\widetilde U_{\ell})}{L^2(\Omega)}^2
 \\&\qquad
 + \alpha^{-1}\c{pythagoras}\,\norm{(P_{\ell+1}-P_\ell)g}{H^{1/2}(\Gamma_D)}^2.
\end{align*}
The use of the $H^{1/2}(\Gamma_D)$-orthogonal projection provides the orthogonality relation
\begin{align*}
\norm{(1-P_{\ell+1})g}{H^{1/2}(\Gamma_D)}^2
+ \norm{(P_{\ell+1}-P_\ell)g}{H^{1/2}(\Gamma_D)}^2
= \norm{(1-P_\ell)g}{H^{1/2}(\Gamma_D)}^2.
\end{align*}
Combining the last two estimates, we obtain
\begin{align*}
 (1-\alpha)\,&\norm{\nabla(u-\widetilde U_{\ell+1})}{L^2(\Omega)}^2
 + \alpha^{-1}\c{pythagoras}\norm{(1-P_{\ell+1})g}{H^{1/2}(\Gamma_D)}^2
 \\&
 \le \norm{\nabla(u-\widetilde U_{\ell})}{L^2(\Omega)}^2
 + \alpha^{-1}\c{pythagoras}\norm{(1-P_{\ell})g}{H^{1/2}(\Gamma_D)}^2
 - \norm{\nabla(\widetilde U_{\ell+1}-\widetilde U_{\ell})}{L^2(\Omega)}^2.
\end{align*}
Applying Lemma~\ref{lemma:doerfler2}, we see that Algorithm~\ref{algorithm2} for $\eta_\ell^2 = \varrho_\ell^2 + \oscD\ell^2$ implicitly implies the D\"orfler marking~\eqref{eq:doerfler2} (resp.~\eqref{eq:doerfler}) for $\widetilde\eta_\ell^{\,2} = \widetilde\varrho_\ell^{\,2}+\oscD\ell^2$. Therefore, the estimator reduction~\eqref{eq:reduction} of Proposition~\ref{prop:reduction} applies to the auxiliary problem and provides
\begin{align*}
 \widetilde\eta_{\ell+1}^{\,2}
 \le\q{reduction}\,\widetilde\eta_{\ell}^{\,2}
 +\c{reduction}\norm{\nabla(\widetilde U_{\ell+1}-\widetilde U_\ell)}{L^2(\Omega)}^2
 \quad\text{for all }\ell\ge0.
\end{align*}
Now, we add the last two estimates to see, for $\beta>0$,
\begin{align*}
 (1-\alpha)\,&\norm{\nabla(u-\widetilde U_{\ell+1})}{L^2(\Omega)}^2
 + \alpha^{-1}\c{pythagoras}\norm{(1-P_{\ell+1})g}{H^{1/2}(\Gamma_D)}^2
 + \beta\,\widetilde\eta_{\ell+1}^{\,2}\\
 &\le \norm{\nabla(u-\widetilde U_{\ell})}{L^2(\Omega)}^2
 + \alpha^{-1}\c{pythagoras}\norm{(1-P_{\ell})g}{H^{1/2}(\Gamma_D)}^2
 + \beta\q{reduction}\,\widetilde\eta_{\ell}^{\,2}\\
 &\qquad
 + (\beta\c{reduction}-1)
 \,\norm{\nabla(\widetilde U_{\ell+1}-\widetilde U_{\ell})}{L^2(\Omega)}^2.
\end{align*}
We choose $\beta>0$ sufficiently small to guarantee $\beta\c{reduction}-1\le0$, i.e.\ the last term on the right-hand side of the last estimate can be omitted. Then, we use the reliability
$\norm{u-\widetilde U_\ell}{H^1(\Omega)}^2\lesssim\widetilde\eta_\ell^{\,2}$ and the estimate $\norm{(1-P_\ell)g}{H^{1/2}(\Gamma_D)}^2\lesssim\oscD\ell^2\le\widetilde\eta_\ell^{\,2}$ from Corollary~\ref{cor:oscD} in the form
\begin{align*}
 \norm{\nabla(u-\widetilde U_\ell)}{L^2(\Omega)}^2 
 + \norm{(1-P_\ell)g}{H^{1/2}(\Gamma_D)}^2
 \le C\,\widetilde\eta_\ell^{\,2}
\end{align*}
to see, for arbitrary $\gamma,\delta>0$
\begin{align*}
 (1-&\alpha)\,\norm{\nabla(u-\widetilde U_{\ell+1})}{L^2(\Omega)}^2
 + \alpha^{-1}\c{pythagoras}\norm{(1-P_{\ell+1})g}{H^{1/2}(\Gamma_D)}^2
 + \beta\,\widetilde\eta_{\ell+1}^{\,2}\\
 &\le \norm{\nabla(u-\widetilde U_{\ell})}{L^2(\Omega)}^2
 + \alpha^{-1}\c{pythagoras}\norm{(1-P_{\ell})g}{H^{1/2}(\Gamma_D)}^2
 + \beta\q{reduction}\,\widetilde\eta_{\ell}^{\,2}\\
 &\le (1-\gamma\beta C^{-1})\,\norm{\nabla(u-\widetilde U_{\ell})}{L^2(\Omega)}^2
 + (1-\delta\beta C^{-1})\alpha^{-1}\c{pythagoras}\,\norm{(1-P_{\ell})g}{H^{1/2}(\Gamma_D)}^2\\
 &\qquad+\beta(\q{reduction}+\gamma+\delta\alpha^{-1}\c{pythagoras})\,
 \widetilde\eta_\ell^{\,2}.
\end{align*}
For $0<\alpha<1$, we may now rearrange this estimate to end up with
\begin{align*}
 &\norm{\nabla(u-\widetilde U_{\ell+1})}{L^2(\Omega)}^2
 + \frac{\c{pythagoras}}{\alpha(1-\alpha)}\,
 \norm{(1-P_{\ell+1})g}{H^{1/2}(\Gamma_D)}^2
 + \frac{\beta}{1-\alpha}\,\widetilde\eta_{\ell+1}^{\,2}
 \\&\qquad
 \le \frac{1-\gamma\beta C^{-1}}{1-\alpha}\,
 \norm{\nabla(u-\widetilde U_{\ell})}{L^2(\Omega)}^2
 + (1-\delta\beta C^{-1})\,\frac{\c{pythagoras}}{\alpha(1-\alpha)}\,
 \norm{(1-P_{\ell})g}{H^{1/2}(\Gamma_D)}^2
 \\&\qquad\qquad
 + (\q{reduction}+\gamma+\delta\alpha^{-1}\c{pythagoras})\,\frac{\beta}{1-\alpha}\,
 \widetilde\eta_\ell^{\,2}.
\end{align*}
It remains to choose the free constants $0<\alpha,\gamma,\delta<1$, whereas
$\beta>0$ has already been fixed:
\begin{itemize}
\item First, choose $0<\gamma<1$ sufficiently small to guarantee
$0<\q{reduction}+\gamma<1$ and $0<\gamma\beta C^{-1}<1$.
\item Second, choose $0<\alpha<1$ sufficiently small such that
$0<(1-\gamma\beta C^{-1})/(1-\alpha)<1$.
\item Third, choose $\delta>0$ sufficiently small with $\q{reduction}+\gamma+\delta\alpha^{-1}\c{pythagoras}<1$.
\end{itemize}
With $\mu:=\beta/(1-\alpha)$, $\lambda:=\alpha^{-1}\c{pythagoras}/(1-\alpha)$,
and $0<\kappa<1$ the maximal contraction constant of the three contributions,
we end up with the contraction estimate~\eqref{eq:contraction}.

It thus only remains to prove equivalence~\eqref{eq:equivalence}: According to the
definition of $\Delta_\ell$ in~\eqref{eq:contraction}, we have equivalence
$\Delta_\ell \simeq \widetilde\eta_\ell^{\,2}$.
Finally, Lemma~\ref{lemma:equivalence} implies
$\widetilde\eta_\ell^{\,2} \simeq \eta_\ell^2$
and concludes the proof.
\qed

\bigskip

\begin{remark}
For $\P_\ell = P_\ell$ the $H^{1/2}(\Gamma_D)$-orthogonal projection, the
proof reveals that Theorem~\ref{theorem:contraction} holds without any restriction on the adaptivity parameters and even for simple D\"orfler marking~\eqref{eq:doerfler}.
\end{remark}

%

\section{Quasi-optimality}
\label{section:optimality}%

\noindent
In this section, we aim to prove Theorem~\ref{theorem:quasioptimal}--\ref{theorem:approximationclass}. In some sense, the heart of the matter of the quasi-optimality analysis is the discrete local reliability
of Proposition~\ref{prop:dlr}. This is, however, only proved for discrete Dirichlet data obtained
by the Scott-Zhang projection. We therefore consider this as an auxiliary
problem: Let $\widetilde U_\ell \in \SS^p(\TT_\ell)$ denote the Galerkin solution
of~\eqref{eq:galerkin} with respect to the Scott-Zhang projection, i.e.\
$\widetilde U_\ell = \sz_\ell g$ on $\Gamma_D$. Finally and as above,
$\widetilde\eta_\ell^{\,2} = \widetilde\varrho_\ell^{\,2} + \oscD\ell^2$ denotes the error estimator for this auxiliary problem.
Although the discrete local reliability of $\widetilde\eta_\ell$ does not imply discrete local reliability of the error estimator $\eta_\ell$ for the primal problem, we will see that nevertheless discrete local reliability of an equivalent error estimator is sufficient for quasi-optimality.

\subsection{Optimality of D\"orfler marking}
Throughout, we assume that the Scott-Zhang projections are chosen with respect
to the assumptions of Section~\ref{section:apriori:scottzhang}.

\begin{proposition}[discrete local reliability for Scott-Zhang projection]
\label{prop:dlr}
Let $\TT_\star = \refine(\TT_\ell)$ be an arbitrary refinement of $\TT_\ell$ and $\widetilde U_\star\in\SS^p(\TT_\star)$ the corresponding Galerkin solution~\eqref{eq:galerkin} with $\widetilde U_\star = \sz_\star g$ on $\Gamma_D$.
Then, there is a set $\RR_\ell\subseteq\TT_\ell$ which contains the refined elements, $\TT_\ell\backslash\TT_\star\subseteq\RR_\ell$ such
that
\begin{align}\label{eq:dlr}
 \norm{\widetilde U_\star - \widetilde U_\ell}{H^1(\Omega)}
 \le \c{dlr}\,\sum_{T\in\RR_\ell}\widetilde\eta_\ell(T)^2
 \quad\text{and}\quad \#\RR_\ell \le \c{refined}\,\#(\TT_\ell\backslash\TT_\star).
\end{align}
The constants $\c{dlr},\c{refined}>0$ depend only on $\TT_0$ and the use of newest vertex bisection.
\end{proposition}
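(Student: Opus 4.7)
The strategy follows the discrete local reliability argument of~\cite{ckns}, adapted to the inhomogeneous Dirichlet setting by systematically exploiting the locality of the Scott-Zhang projection. First, I would construct $\RR_\ell \subseteq \TT_\ell$ explicitly: it consists of the refined elements $\TT_\ell\setminus\TT_\star$ together with one layer of element and facet patches around them, chosen large enough so that on $\Omega\setminus\bigcup\RR_\ell$ the following holds: for any $T\in\TT_\ell\cap\TT_\star$ with $T\subseteq\Omega\setminus\bigcup\RR_\ell$, the patches $\omega_{\ell,T}$ and $\omega_{\star,T}$ coincide, and likewise $\omega_{\ell,E}^\Gamma=\omega_{\star,E}^\Gamma$ for every $E\in\EE_\ell^D\cap\EE_\star^D$ touching $T$. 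By shape regularity and the finiteness of patch shapes under newest vertex bisection, one has $\#\RR_\ell\lesssim\#(\TT_\ell\setminus\TT_\star)$, and $(\sz_\ell w)|_{\Omega\setminus\bigcup\RR_\ell}=(\sz_\star w)|_{\Omega\setminus\bigcup\RR_\ell}$ for all $w\in H^1(\Omega)$; in particular $\sz_\ell g=\sz_\star g$ on $\Gamma_D\setminus\bigcup\RR_\ell$.

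Second, set $V:=\widetilde U_\star-\widetilde U_\ell\in\SS^p(\TT_\star)$. By construction, $V|_{\Gamma_D}=\sz_\star g-\sz_\ell g$ is supported on the Dirichlet facets of $\RR_\ell$, which I denote $\RR_\ell^D$. Using the nodal values of $V|_{\Gamma_D}$, I would build a discrete lifting $Z\in\SS^p(\TT_\star)$ with $Z|_{\Gamma_D}=V|_{\Gamma_D}$, vanishing at all interior degrees of freedom, so that $\mathrm{supp}(Z)$ is contained in the single layer of $\TT_\star$-elements adjacent to $\RR_\ell^D$. Then $W:=V-Z\in\SS^p_D(\TT_\star)$, and since $W|_{\Gamma_D}=0$, the projection property implies $\sz_\ell W\in\SS^p_D(\TT_\ell)\subseteq\SS^p_D(\TT_\star)$. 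The Galerkin orthogonalities for $\widetilde U_\ell$ and $\widetilde U_\star$ with common data $(f,\phi)$ give $\langle\nabla V,\nabla\sz_\ell W\rangle_\Omega=0$, whence
\begin{align*}
 \norm{\nabla V}{L^2(\Omega)}^2
 = \dual{\nabla V}{\nabla(W-\sz_\ell W)}_\Omega
 + \dual{\nabla V}{\nabla Z}_\Omega.
\end{align*}

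Third, I would estimate both contributions. For the first, locality of $\sz_\ell$ yields $(W-\sz_\ell W)|_T=0$ whenever $\omega_{\ell,T}\cap\bigcup\RR_\ell=\emptyset$, so elementwise integration by parts produces only volume residuals on $T\in\RR_\ell$ and jump terms on adjacent interior/Neumann facets; Scott-Zhang approximation~\eqref{eq:scottzhangapprox} applied to $W$ together with local $H^1$-stability bound these in terms of $\sum_{T\in\RR_\ell}\widetilde\varrho_\ell(T)^2$ multiplied by $\norm{\nabla W}{L^2(\omega(\RR_\ell))}\le\norm{\nabla V}{L^2(\Omega)}+\norm{\nabla Z}{L^2(\Omega)}$. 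For the second term, I use a Cauchy-Schwarz inequality, a discrete inverse estimate on $Z$ and the identity $V|_{\Gamma_D}=(\sz_\star-1)g-(\sz_\ell-1)g$, together with Proposition~\ref{lemma:oscD} restricted to $\RR_\ell^D$, to obtain $\norm{\nabla Z}{L^2(\Omega)}^2\lesssim\sum_{E\in\RR_\ell^D}\oscD\ell(E)^2$. Combining and applying Young's inequality to absorb $\norm{\nabla V}{L^2(\Omega)}^2$ yields the claim, and Friedrichs inequality on $V-Z$ upgrades the gradient bound to the full $H^1$-norm.

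The main obstacle is the third step, specifically the localized control of the boundary term: bounding $\norm{\sz_\star g-\sz_\ell g}{H^{1/2}(\Gamma_D)}^2$ by the Dirichlet oscillations restricted to $\RR_\ell^D$ rather than on all of $\Gamma_D$. This requires combining the localized equivalence~\eqref{eq:dirichlet2} of Proposition~\ref{lemma:oscD} with the observation that both Scott-Zhang interpolants agree outside $\bigcup\RR_\ell$, so that their difference has compactly supported trace. Careful treatment of the non-local $H^{1/2}$-norm through a $\TT_\star$-weighted $L^2$-norm of the surface gradient (available since $(\sz_\star-\sz_\ell)g$ is discrete on $\EE_\star^D$) is the technical heart of the proof.
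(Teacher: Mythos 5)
Your overall architecture (split $\widetilde U_\star-\widetilde U_\ell$ into a part with zero discrete Dirichlet trace, treat that part by the CKNS-type argument with $\sz_\ell$ and Galerkin orthogonality, and treat the boundary part separately) parallels the paper, which writes $\widetilde U_\star-\widetilde U_\ell = (\widetilde U_\star-\widetilde U_\ell-W_\star)+W_\star$ with $W_\star$ the \emph{discrete harmonic} extension of $(\sz_\star-\sz_\ell)g$. The genuine gap is in your treatment of the boundary part. You lift $V|_{\Gamma_D}=(\sz_\star-\sz_\ell)g$ by the nodal zero-extension $Z\in\SS^p(\TT_\star)$ (all interior degrees of freedom set to zero) and claim $\norm{\nabla Z}{L^2(\Omega)}^2\lesssim\sum_{E\in\RR_\ell^D}\oscD\ell(E)^2$ via an inverse estimate and Proposition~\ref{lemma:oscD}. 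This estimate is false with constants independent of $\TT_\star$: by scaling, the zero-extension satisfies
\begin{align*}
\norm{\nabla Z}{L^2(\Omega)}^2\;\simeq\;\sum_{E'\in\EE_\star^D}h_{E'}^{-1}\,\norm{(\sz_\star-\sz_\ell)g}{L^2(E')}^2 ,
\end{align*}
with the \emph{fine} mesh-width $h_\star$ in the weight. Since $\TT_\star=\refine(\TT_\ell)$ is arbitrary, you may refine a single coarse Dirichlet facet $E$ with $(1-\sz_\ell)g\neq0$ on $E$ arbitrarily often; then $(\sz_\star-\sz_\ell)g\to(1-\sz_\ell)g\neq0$ in $L^2(E)$ while $h_\star^{-1}\to\infty$, so $\norm{\nabla Z}{L^2(\Omega)}^2$ blows up like $h_\ell/h_\star$, whereas the right-hand side $\sum_{E\in\RR_\ell^D}\oscD\ell(E)^2$ is fixed (and so is the quantity $\norm{\widetilde U_\star-\widetilde U_\ell}{H^1(\Omega)}$ you want to bound). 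This is the well-known failure of nodal/zero extensions to be uniformly $H^{1/2}$-stable; it also contaminates your first term, since your bound there carries the factor $\norm{\nabla W}{L^2}\le\norm{\nabla V}{L^2}+\norm{\nabla Z}{L^2}$.

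The paper avoids exactly this trap: instead of a locally supported lifting it takes the minimal-energy (discrete harmonic) extension $W_\star$, bounds $\norm{W_\star}{H^1(\Omega)}\lesssim\norm{(\sz_\star-\sz_\ell)g}{H^{1/2}(\Gamma_D)}$ via a continuous lifting composed with $\sz_\star$, and then invests the real work in localizing the non-local $H^{1/2}$-norm: $(\sz_\star-\sz_\ell)g$ vanishes outside one layer $\omega_\ell^1$ around the refined Dirichlet facets, and a partition of unity by coarse hat functions combined with the interpolation inequality $\norm{\cdot}{H^{1/2}}^2\lesssim\norm{\cdot}{L^2}\norm{\cdot}{H^1}$, the Friedrichs inequality on node patches, the inverse estimate for $\nabla_\Gamma\zeta_{\ell,z}$, local stability of $\sz_\star$, the local approximation property of $\sz_\ell$, and~\eqref{eq:dirichlet2} yields $\norm{(\sz_\star-\sz_\ell)g}{H^{1/2}(\Gamma_D)}^2\lesssim\sum_{E\subseteq\omega_\ell^5}\oscD\ell(E)^2$ (note: a fixed number of layers, not one, because of the successive patch enlargements). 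You correctly identify this localization as the technical heart, but the mechanism you propose (discrete inverse estimate on $Z$) cannot deliver it; to repair the proof you would essentially have to replace $Z$ by the discrete harmonic extension and carry out the CMS/FKMP-type $H^{1/2}$-localization as in the paper.
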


\begin{proof}
We consider a discrete auxiliary problem
\begin{align*}
 \dual{\nabla W_\star}{\nabla V_\star}_\Omega = 0
 \quad\text{for all }V_\star \in \SS^p_D(\TT_\star)
\end{align*}
with unique solution $W_\star \in \SS^p(\TT_\star)$ with $W_\star|_{\Gamma_D} = (\sz_\star-\sz_\ell)g$. Then, $(\widetilde U_\star - \widetilde U_\ell - W_\star)\in\SS^p_D(\TT_\star)$, and the $H^1$-norm is bounded by the $H^1$-seminorm. Moreover, arguing as in~\cite[Lemma~3.6]{ckns}, we see
\begin{align*}
 \norm{\widetilde U_\star - \widetilde U_\ell - W_\star}{H^1(\Omega)}^2
 \lesssim \norm{\nabla(\widetilde U_\star - \widetilde U_\ell - W_\star)}{L^2(\Omega)}^2
 \lesssim \sum_{T\in\TT_\ell\backslash\TT_\star}\widetilde\varrho_\ell(T)^2
 \le \sum_{T\in\TT_\ell\backslash\TT_\star}\widetilde\eta_\ell(T)^2.
\end{align*}
According to the triangle inequality, it thus only remains to bound
$\norm{W_\star}{H^1(\Omega)}$ by $\sum_{T\in\RR_\ell}\widetilde\eta_\ell(T)^2$ with some appropriate $\RR_\ell\supseteq\TT_\ell\backslash\TT_\star$. To that end,
let $\LL:H^{1/2}(\Gamma)\to H^1(\Omega)$ be a lifting operator and $\widehat g\in H^{1/2}(\Gamma)$ an arbitrary extension of $(\sz_\star-\sz_\ell)g\in H^{1/2}(\Gamma_D)$. With $V_\star := W_\star - \sz_\star\LL\widehat g\in\SS^p_D(\TT_\star)$, we obtain
\begin{align*}
 \norm{W_\star}{L^2(\Omega)}
 \le \norm{V_\star}{L^2(\Omega)}
  + \norm{\sz_\star\LL\widehat g}{L^2(\Omega)}
 &\lesssim \norm{\nabla V_\star}{L^2(\Omega)}
  + \norm{\sz_\star\LL\widehat g}{L^2(\Omega)}
 \\&
 \lesssim \norm{\nabla W_\star}{L^2(\Omega)}
  + \norm{\sz_\star\LL\widehat g}{H^1(\Omega)}.
\end{align*}
Moreover, the variational formulation for $W_\star\in\SS^p(\TT_\star)$ yields
\begin{align*}
 0 \!=\! \dual{\nabla W_\star}{\nabla V_\star}_\Omega
 \!=\! \norm{\nabla W_\star}{L^2(\Omega)}^2
 \!-\! \dual{\nabla W_\star}{\nabla\sz_\star\LL\widehat g}_\Omega,
\text{ whence }
 \norm{\nabla W_\star}{L^2(\Omega)}
 \le \norm{\nabla\sz_\star\LL\widehat g}{L^2(\Omega)}.
\end{align*}
Combining the last two estimates,
we obtain
\begin{align*}
 \norm{W_\star}{H^1(\Omega)}
 \lesssim \norm{\sz_\star\LL\widehat g}{H^1(\Omega)}
 \lesssim \norm{\widehat g}{H^{1/2}(\Gamma)}.
\end{align*}
Since $\widehat g$ was an arbitrary extension, this proves
\begin{align*}
 \norm{W_\star}{H^1(\Omega)} \lesssim \norm{(\sz_\star-\sz_\ell)g}{H^{1/2}(\Gamma_D)}.
\end{align*}
To abbreviate the notation in the remainder of the proof, let $\RR_\ell^D := \EE_\ell^D\backslash\EE_\ell^\star$ denote the refined Dirichlet facets. We define inductively
\begin{align*}
 \omega_\ell^0 = \mcup\RR_\ell^D,
 \quad
 \omega_\ell^n = \mcup\set{E\in\EE_\ell^D}{E\cap\omega_\ell^{n-1}\neq\emptyset}
 \quad\text{for }n\ge1,
\end{align*}
i.e.\ $\omega_\ell^n$ denotes the region of the refined Dirichlet facets plus $n$ layers of (non-refined) Dirichlet facets with respect to $\EE_\ell^D$.
Note that $\omega_{\ell}^1$ is nothing but the usual patch of $\RR_\ell^D$. Due to the local definition of $\sz_\ell$ and $\sz_\star$, we observe
\begin{align}\label{eq:szdlr}
(\sz_\star-\sz_\ell)g = 0 \quad \text{on }\Gamma_D\setminus \omega_\ell^1.
\end{align}
Let $\zeta_{\ell,z}\in \SS^1(\EE_\ell^D)$ denote the hat function associated with some node $z\in\KK_\ell^D$ of $\EE_\ell^D$. Clearly, the hat functions $\set{\zeta_{\ell,z}}{z\in\KK_\ell^D}$ provide a partition of unity $\sum_{z\in\KK_\ell^D} \zeta_{\ell,z} = 1 $ on $\Gamma_D$ resp.
$\sum_{z\in\KK_\ell^D\cap\omega_\ell^1} \zeta_{\ell,z} = 1 $ on $\omega_\ell^1$.
Exploiting~\eqref{eq:szdlr}, we see
\begin{align}\label{eq1:szdlr}
\begin{split}
\norm{(\sz_\star-\sz_\ell)g}{H^{1/2}(\Gamma_D)}& 
=\norm[\Big]{\sum_{z\in\KK_\ell^D\cap\omega_\ell^1} \zeta_{\ell,z}(\sz_\star-\sz_\ell)g}{H^{1/2}(\Gamma_D)} .
\end{split}
\end{align}
We now adapt the arguments of~\cite{cms,fkmp} to our setting.
Analogously to the proof of~\cite[Theorem 3.2]{cms}
resp.~\cite[Proposition~4.3]{fkmp}, we obtain
\begin{align*}
\begin{split}
\norm[\Big]{\sum_{z\in\KK_\ell^D\cap\omega_\ell^1} &\zeta_{\ell,z}(\sz_\star-\sz_\ell)g}{H^{1/2}(\Gamma_D)}^2
 \lesssim
\sum_{z\in\KK_\ell^D\cap\omega_\ell^1} \norm{\zeta_{\ell,z}(\sz_\star-\sz_\ell)g}{H^{1/2}(\Gamma_D)}^2\\
&\lesssim
\sum_{z\in\KK_\ell^D\cap\omega_\ell^1} \norm{\zeta_{\ell,z}(\sz_\star-\sz_\ell)g}{L^2(\Gamma_D)} \norm{\zeta_{\ell,z}(\sz_\star-\sz_\ell)g}{H^1(\Gamma_D)},
\end{split}
\end{align*}
where the final estimate is just the interpolation estimate.
As above, let
\begin{align*}
 \omega_{\ell,z}^0 := \{z\},
 \quad
 \omega_{\ell,z}^n := \mcup\set{E\in\EE_\ell^D}{E\cap\omega_{\ell,z}^{n-1}\neq\emptyset}
 \quad\text{for }n\ge1,
\end{align*}
i.e.\ $\omega_{\ell,z}^1 = \mcup\set{E\in\EE_\ell^D}{z\in E}$ denotes the
node patch of $z\in\KK_\ell^D$ which is just the support of the hat function $\zeta_{\ell,z}$ on $\Gamma_D$.
To proceed, we apply the Friedrichs inequality to the summands on the right-hand side of the estimate above and derive
 \begin{align}\label{eq3:szdlr}
\begin{split}
\norm[\Big]{\sum_{z\in\KK_\ell^D\cap\omega_\ell^1} \zeta_{\ell,z}(\sz_\star-\sz_\ell)g}{H^{1/2}(\Gamma_D)}^2
 &\lesssim
\sum_{z\in\KK_\ell^D\cap\omega_\ell^1} \diam(\omega_{\ell,z}^1)\norm{\nabla_\Gamma\big(\zeta_{\ell,z}(\sz_\star-\sz_\ell)g\big)}{L^2(\omega_{\ell,z}^1)}^2\\
&\simeq
\sum_{z\in\KK_\ell^D\cap\omega_\ell^1} \norm{h_\ell^{1/2}\nabla_\Gamma\big(\zeta_{\ell,z}(\sz_\star-\sz_\ell)g\big)}{L^2(\omega_{\ell,z}^1)}^2.
\end{split}
\end{align}
Here, $h_\ell\in L^\infty(\Gamma_D)$ denotes the local mesh-width function $h_\ell|_E = |T|^{1/d}$ for $E\in\EE_\ell^D$ and $T\in\TT_\ell$ the unique element with $E\subset\partial T$.
Formally, the constants in the Friedrichs inequality depend on the shape of
$\omega_{\ell,z}^1$.
Note, however, that there are only finitely many shapes of patches due to the use of newest vertex bisection. Next, we use the estimate
$|\nabla_\Gamma\zeta_{\ell,z}|_E| \simeq \diam(E)^{-1} \simeq h_\ell^{-1}|_E$ for $E\in\EE_\ell^D$. This and the product rule yield
\begin{align*}
\begin{split}
\norm{h_\ell^{1/2}&\nabla_\Gamma\big(\zeta_{\ell,z}(\sz_\star-\sz_\ell)g\big)}{L^2(\omega_{\ell,z}^1)}^2
\\&
\leq
\norm{h_\ell^{1/2}(\nabla_\Gamma\zeta_{\ell,z})(\sz_\star-\sz_\ell)g}{L^2(\omega_{\ell,z}^1)}^2
+\norm{h_\ell^{1/2}\nabla_\Gamma(\sz_\star-\sz_\ell)g}{L^2(\omega_{\ell,z}^1)}^2\\
&\lesssim
\norm{h_\ell^{-1/2}(\sz_\star-\sz_\ell)g}{L^2(\omega_{\ell,z}^1)}^2+
\norm{h_\ell^{1/2}\nabla_\Gamma(\sz_\star-\sz_\ell)g}{L^2(\omega_{\ell,z}^1)}^2
\\&
= \norm{h_\ell^{-1/2}\sz_\star(1-\sz_\ell)g}{L^2(\omega_{\ell,z}^1)}^2+
\norm{h_\ell^{1/2}\nabla_\Gamma\sz_\star(1-\sz_\ell)g}{L^2(\omega_{\ell,z}^1)}^2.
\end{split}
\end{align*}
Finally, the local stability of $\sz_\star$ and the local approximation property of $\sz_\ell$ yield
\begin{align}\label{eq5:szdlr}
\begin{split}
\norm{h_\ell^{1/2}\nabla_\Gamma\big(\zeta_{\ell,z}(\sz_\star-\sz_\ell)g\big)}{L^2(\omega_{\ell,z}^1)}^2
&\lesssim
\norm{h_\ell^{-1/2}(1-\sz_\ell)g}{L^2(\omega_{\ell,z}^2)}^2+
\norm{h_\ell^{1/2}\nabla_\Gamma(1-\sz_\ell)g}{L^2(\omega_{\ell,z}^2)}^2\\
&\lesssim
\norm{h_\ell^{1/2}\nabla_\Gamma(1-\sz_\ell)g}{L^2(\omega_{\ell,z}^3)}^2\\
&\lesssim
\norm{h_\ell^{1/2}(1-\Pi_\ell)\nabla_\Gamma g}{L^2(\omega_{\ell,z}^4)}^2,
\end{split}
\end{align}
where we have finally used Estimate~\eqref{eq:dirichlet2} of Proposition~\ref{lemma:oscD}. Now, let $\widetilde\RR_\ell^D := \set{E\in\EE_\ell^D}{E\subseteq\omega_\ell^5}$ denote the set of Dirichlet facets which lie in $\omega_\ell^5$ and note that $\#\widetilde\RR_\ell^D \simeq\#\RR_\ell^D\lesssim\#(\TT_\ell\backslash\TT_\star)$ up to shape regularity. The combination of~\eqref{eq1:szdlr}--\eqref{eq5:szdlr} yields
\begin{align*}
 \norm{W_\star}{H^1(\Omega)}^2
 \lesssim \norm{(\sz_\star-\sz_\ell)g}{H^{1/2}(\Gamma_D)}^2
 &\lesssim \sum_{z\in\KK_\ell^D\cap\omega_\ell^1}
 \norm{h_\ell^{1/2}(1-\Pi_\ell)\nabla_\Gamma g}{L^2(\omega_{\ell,z}^4)}^2
 \\&
 \simeq \norm{h_\ell^{1/2}(1-\Pi_\ell)\nabla_\Gamma g}{L^2(\omega_{\ell}^5)}^2
 \\&
 \lesssim \sum_{E\in\widetilde\RR_\ell^D}\oscD\ell(E)^2,
\end{align*}
due to~\eqref{eq:dirichlet2}--\eqref{eq:dirichlet} in Proposition~\ref{lemma:oscD}
Defining the set
\begin{align*}
\RR_\ell := \TT_\ell\backslash\TT_\star \cup \set{T\in\TT_\ell}{\exists E\in\widetilde\RR_\ell^D\quad E\subset\partial T},
\end{align*}
we observe $\TT_\ell\backslash\TT_\star \subseteq \RR_\ell$ as well as $\#\RR_\ell \lesssim \#(\TT_\ell\backslash\TT_\star)$. Moreover, the definition of the local contributions of $\widetilde\eta_\ell$ in~\eqref{eq:eta:local} shows
\begin{align*}
 \sum_{E\in\widetilde\RR_\ell^D}\oscD\ell(E)^2
 \le \sum_{T\in\RR_\ell}\widetilde\eta_\ell(T)^2.
\end{align*}
This concludes the proof.
\end{proof}

\begin{corollary}[optimality of D\"orfler marking for Scott-Zhang projection]
\label{prop:doerfler}%
For arbitrary $0<\kappa_\star<1$, there is a constant $0<\theta_\star<1$
 such that for all $\ell\in\N_0$ and all meshes $\TT_\star = \refine(\TT_\ell)$ with
$\widetilde\eta_\star^{\,2} \le \kappa_\star\,\widetilde\eta_\ell^{\,2}$, the set $\RR_\ell\subseteq\TT_\ell$ from Proposition~\ref{prop:dlr} satisfies 
the D\"orfler marking
\begin{align}
 \theta\,\widetilde\eta_\ell^{\,2}
 \le \sum_{T\in\RR_\ell}\widetilde\eta_\ell(T)^2
\end{align}
for all $0<\theta\le\theta_\star$.
\end{corollary}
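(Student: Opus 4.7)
The plan is to mimic the classical CKNS-style argument that converts a bound of the form $\widetilde\eta_\star^{\,2}\le\kappa_\star\widetilde\eta_\ell^{\,2}$ into a D\"orfler property on a suitable set, where the role of the ``marked'' elements is now played by $\RR_\ell\supseteq\TT_\ell\setminus\TT_\star$ supplied by the discrete local reliability of Proposition~\ref{prop:dlr}. The argument is driven by a reverse local perturbation of $\widetilde\eta_\ell$ by $\widetilde\eta_\star$ on the unrefined elements, combined with the standing hypothesis and the discrete local reliability bound.

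First, for each $T\in\TT_\ell\cap\TT_\star$ and arbitrary $\delta>0$ I will establish
\begin{align*}
  \widetilde\eta_\ell(T)^2 \le (1+\delta)\,\widetilde\eta_\star(T)^2 + C_\delta\,\norm{\nabla(\widetilde U_\star-\widetilde U_\ell)}{L^2(\omega_\ell(T))}^2.
\end{align*}
This is the same triangle-plus-Young reasoning as in the proof of Proposition~\ref{prop:reduction}, with the roles of $\widetilde U_\ell$ and $\widetilde U_\star$ exchanged and combined with local inverse estimates. The crucial observation for the Dirichlet oscillation contribution is that on any unrefined $T\in\TT_\ell\cap\TT_\star$ every boundary facet $E\subset\partial T\cap\Gamma_D$ is unrefined as well, hence $\Pi_\ell|_E=\Pi_\star|_E$ and $\oscD\ell(E)=\oscD\star(E)$ transfers identically.

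Second, summing over $T\in\TT_\ell\cap\TT_\star$ and using the finite overlap of the patches as well as $\sum_{T\in\TT_\ell\cap\TT_\star}\widetilde\eta_\star(T)^2\le\widetilde\eta_\star^{\,2}$, I rewrite the left-hand side as $\widetilde\eta_\ell^{\,2}-\sum_{T\in\TT_\ell\setminus\TT_\star}\widetilde\eta_\ell(T)^2$ and enlarge the subtracted sum to the superset $\RR_\ell\supseteq\TT_\ell\setminus\TT_\star$ (which only makes the left-hand side smaller), to obtain
\begin{align*}
  \widetilde\eta_\ell^{\,2}-\sum_{T\in\RR_\ell}\widetilde\eta_\ell(T)^2 \le (1+\delta)\,\widetilde\eta_\star^{\,2} + C_\delta\,\norm{\nabla(\widetilde U_\star-\widetilde U_\ell)}{L^2(\Omega)}^2.
\end{align*}
Into this I then insert both the standing hypothesis $\widetilde\eta_\star^{\,2}\le\kappa_\star\widetilde\eta_\ell^{\,2}$ and the squared form of Proposition~\ref{prop:dlr}, namely $\norm{\nabla(\widetilde U_\star-\widetilde U_\ell)}{L^2(\Omega)}^2\le\c{dlr}^{\,2}\sum_{T\in\RR_\ell}\widetilde\eta_\ell(T)^2$, and rearrange to arrive at
\begin{align*}
  \bigl(1-(1+\delta)\kappa_\star\bigr)\,\widetilde\eta_\ell^{\,2} \le \bigl(1+C_\delta\,\c{dlr}^{\,2}\bigr)\sum_{T\in\RR_\ell}\widetilde\eta_\ell(T)^2.
\end{align*}
Since $\kappa_\star<1$, picking $\delta>0$ so small that $(1+\delta)\kappa_\star<1$ renders the left-hand prefactor strictly positive, and $\theta_\star:=(1-(1+\delta)\kappa_\star)/(1+C_\delta\,\c{dlr}^{\,2})\in(0,1)$ is the desired marking parameter.

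The main obstacle is the reverse elementwise perturbation in the first step: the volume-residual and interior-jump terms are dealt with exactly as in the proof of Proposition~\ref{prop:reduction}, but one must genuinely rely on the structure of the Dirichlet oscillation, namely that $\Pi_\ell$ is the piecewise $L^2$-projection onto $\PP^{p-1}(\EE_\ell^D)$ and therefore remains unchanged on facets that are not bisected. Without this identity, the oscillation would contribute an additional perturbation that could spoil the final rearrangement.
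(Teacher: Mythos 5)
Your proposal is correct and follows essentially the same route as the paper's proof: compare $\widetilde\eta_\ell$ on the unrefined elements $\TT_\ell\cap\TT_\star$ with $\widetilde\eta_\star$ plus the perturbation $\norm{\nabla(\widetilde U_\star-\widetilde U_\ell)}{L^2(\Omega)}^2$ (noting that the Dirichlet oscillations coincide on unrefined facets), then insert the hypothesis $\widetilde\eta_\star^{\,2}\le\kappa_\star\,\widetilde\eta_\ell^{\,2}$ together with the discrete local reliability of Proposition~\ref{prop:dlr} and rearrange. Your only deviation is to keep the Young parameter $\delta$ free rather than fixing $\delta=1$ as the paper does, which is in fact slightly sharper, since your $\theta_\star=\bigl(1-(1+\delta)\kappa_\star\bigr)/\bigl(1+C_\delta\,\c{dlr}\bigr)$ works for arbitrary $0<\kappa_\star<1$, whereas the paper's choice $\theta_\star=(\c{reduction}\c{dlr}+1)^{-1}(1-2\kappa_\star)$ is positive only for $\kappa_\star<1/2$.
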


\begin{proof}
We split the estimator into the contributions on the non-refined resp.\ refined elements
\begin{align*}
  \widetilde\eta_\ell^{\,2}
  = \sum_{T\in\TT_\ell}\widetilde\eta_\ell(T)^2
  = \sum_{T\in\TT_\ell\cap\TT_\star}\widetilde\eta_\ell(T)^2
  + \sum_{T\in\TT_\ell\backslash\TT_\star}\widetilde\eta_\ell(T)^2.
\end{align*}
Arguing as for the estimator reduction
in Proposition~\ref{prop:reduction} with $\delta = 1$, we see
\begin{align*}
 \sum_{T\in\TT_\ell\cap\TT_\star}\widetilde\eta_\ell(T)^2
 \le 2\sum_{T\in\TT_\ell\cap\TT_\star}\widetilde\eta_\star(T)^2
 + \c{reduction}\,\norm{\nabla(\widetilde U_\star - \widetilde U_\ell)}{L^2(\Omega)}^2
\le 2\widetilde\eta_\star^{\,2} + \c{reduction}\,\norm{\nabla(\widetilde U_\star - \widetilde U_\ell)}{L^2(\Omega)}^2.
\end{align*}
We now combine both estimates and use $\widetilde\eta_\star^{\,2} \le \kappa_\star\,\widetilde\eta_\ell^{\,2}$ as well as the discrete local reliability with $\RR_\ell\supseteq\TT_\ell\backslash\TT_\star$ to see
\begin{align*}
 \widetilde\eta_\ell^{\,2}
 \le 2\widetilde\eta_\star^{\,2} + \c{reduction}\,\norm{\nabla(\widetilde U_\star - \widetilde U_\ell)}{L^2(\Omega)}^2
 + \sum_{T\in\TT_\ell\backslash\TT_\star}\widetilde\eta_\ell(T)^2
 \le 2\,\kappa_\star\,\widetilde\eta_\ell^{\,2} + (\c{reduction}\c{dlr}+1)\,\sum_{T\in\RR_\ell}
 \widetilde\eta_\ell(T)^2.
\end{align*}
Rearranging this estimate, we obtain
\begin{align*}
 (\c{reduction}\c{dlr}+1)^{-1}(1-2\kappa_\star)\, \widetilde\eta_\ell^{\,2}
 \le \sum_{T\in\RR_\ell}\widetilde\eta_\ell(T)^2,
\end{align*}
so that $0<\theta_\star := (\c{reduction}\c{dlr}+1)^{-1}(1-2\kappa_\star)<1$ concludes the proof.
\end{proof}

\subsection{Optimality of newest vertex bisection}
The quasi-optimality analysis of AFEM requires two properties of the mesh-refinement which are satisfied for newest vertex bisection: First, for two triangulations $\TT',\TT''\in\T$, let $\TT'\oplus\TT''\in\T$ be the coarsest common refinement of both. Since newest vertex bisection is a binary refinement rule, it can be proved that $\TT'\oplus\TT''$ is just the overlay of both meshes, see~\cite[Proof of Lemma~5.2]{stevenson} for 2D and the generalization to arbitrary dimension in~\cite[Lemma~3.7]{ckns}. Moreover, the number of elements of the overlay is controlled by
\begin{align}\label{eq:overlay}
 \#(\TT'\oplus\TT'') \le \#\TT' + \#\TT'' - \#\TT_0,
\end{align}
since both meshes are generated from the initial mesh $\TT_0$.

Second, we need the optimality of the mesh-closure, i.e.\ the definition $\TT_{\ell+1} = \refine(\TT_\ell,\MM_\ell)$ leads at least to refinement of all marked elements $T\in\MM_\ell$. In addition, further elements $T\in\TT_\ell\backslash\MM_\ell$ have to be refined to ensure conformity of the mesh. It has been proved in~\cite[Theorem~2.4]{bdd} for 2D that
\begin{align}\label{eq:closure}
 \#\TT_{\ell+1} - \#\TT_0 \le \c{nvb}\,\sum_{j=0}^{\ell}\#\MM_j
 \quad\text{for all }\ell\ge0,
\end{align}
i.e.\ the number of elements in $\TT_{\ell+1}$ is bounded by the number of marked elements.
The constant $\setc{nvb}>0$ depends only on $\TT_0$ in the sense that the
initial reference edge distribution had to satisfy a certain assumption. Very recently~\cite{kp}, it could be proved that~\eqref{eq:closure} holds without any further assumptions on $\TT_0$. For arbitrary dimension,~\eqref{eq:closure} has been proved in~\cite[Theorem~6.1]{stevenson:nvb} and $\TT_0$ has to satisfy a certain assumption on the initial reference edge distribution.

\subsection{Proof of quasi-optimality of AFEM (Theorem~\ref{theorem:quasioptimal})}
In a first step, we prove that $\#\MM_\ell \lesssim \Delta_\ell^{-1/(2s)}$. To that end, let $\eps>0$ be a free parameter which is determined later. According to the definition of the approximation class $\A_s$, there is some triangulation $\TT_\eps\in\T$ with
\begin{align*}
 \eta_\eps \le \eps
 \quad\text{and}\quad \#\TT_\eps - \#\TT_0 \lesssim \eps^{-1/s},
\end{align*}
where the hidden constant depends only on $\A_s$. We consider the overlay
$\TT_\star := \TT_\eps \oplus \TT_\ell$. Arguing as for the estimator
reduction~\eqref{eq:reduction} and use of the discrete local reliability for
$\widetilde\eta_\ell$, we obtain
\begin{align*}
 \widetilde\eta_\star \lesssim
 \widetilde\eta_\ell
 + \norm{\nabla(\widetilde U_\star - \widetilde U_\ell)}{L^2(\Omega)}
 \lesssim \widetilde \eta_\ell \simeq \eta_\ell \le \eps,
\end{align*}
where we have finally used the equivalence of both error estimators provided
by Lemma~\ref{lemma:equivalence}. Choosing $\eps = \delta\,\eta_\ell \simeq \delta\,\widetilde\eta_\ell$ with
 sufficiently small $\delta>0$, we thus infer
\begin{align*}
 \widetilde\eta_\star \le \widetilde\kappa_\star\widetilde\eta_\ell
\end{align*}
with some appropriate $0<\widetilde\kappa_\star\le\kappa_\star$, where arbitrary $0<\kappa_\star<1$ in Proposition~\ref{prop:doerfler} fixes $0<\theta_\star<1$. The constant $\widetilde\kappa_\star$ will be determined later. Together with the
overlay estimate~\eqref{eq:overlay}, we infer
\begin{align*}
 \#\RR_\ell \simeq \#(\TT_\ell\backslash\TT_\star)
 \le \#\TT_\star - \#\TT_\ell \le \#\TT_\eps - \#\TT_0 \lesssim \eps^{-1/s}
\end{align*}
as well as the D\"orfler estimate
\begin{align*}
 \theta_\star \widetilde\eta_\ell^{\,2}
 \le \sum_{T\in\RR_\ell} \widetilde\eta_\ell(T)^2.
\end{align*}
We now need to show that this implies Stevenson's modified D\"orfler marking.
To that end, we again employ Lemma~\ref{lemma:equivalence}:

$\bullet$ In case of $\oscD\ell^2 \le \vartheta \, \varrho_\ell^2$, we employ Lemma~\ref{lemma:equivalence} twice to see
\begin{align*}
 \theta_\star \varrho_\ell^2
 \lesssim \theta_\star\widetilde\eta_\ell^{\,2}
 \le \sum_{T\in\RR_\ell}\widetilde\eta_\ell(T)^2 + \oscD\ell^2
 \lesssim \sum_{T\in\RR_\ell}\varrho_\ell(T)^2 + \oscD\ell^2
 \le \sum_{T\in\RR_\ell}\varrho_\ell(T)^2 + \vartheta\varrho_\ell^2.
\end{align*}
Put differently, we obtain
\begin{align*}
 \big((\c{equivalence}+1)^{-2}\theta_\star-\vartheta\big)\,\varrho_\ell^2
 \le\sum_{T\in\RR_\ell}\varrho_\ell(T)^2,
\end{align*}
i.e.\ for $0<\vartheta,\theta_1<1$ sufficiently small, the set $\RR_\ell\subseteq\TT_\ell$ satisfies
the marking criterion~\eqref{eq:doerfler:res}.

$\bullet$ In case of $\oscD\ell^2 > \vartheta\,\varrho_\ell^2$, we use that the Dirichlet oscillations are locally determined, i.e.
\begin{align*}
 \sum_{E\in\EE_\ell^D\cap\EE_\star^D}\!\! \oscD\ell(E)^2
 = \!\!\sum_{E\in\EE_\ell^D\cap\EE_\star^D}\!\! \oscD\star(E)^2
 \le \oscD\star^2
 \le \widetilde\eta_\star^{\,2}
 \le \widetilde\kappa_\star \widetilde\eta_\ell^{\,2}
 \simeq \widetilde\kappa_\star \eta_\ell^2
 \le \widetilde\kappa_\star(1+\vartheta^{-1})\, \oscD\ell^2.
\end{align*}
This estimate yields
\begin{align*}
 \big(1-\widetilde \kappa_\star(1+\vartheta^{-1})(\c{equivalence} + 1)\big)
 \,\oscD\ell^2
 \le \sum_{E\in\EE_\ell^D\backslash\EE_\star^D}\oscD\ell(E)^2
\end{align*}
For arbitrary $0<\theta_2<1$ and sufficiently small $0<\widetilde\kappa_\star<1$, we infer that $\EE_\ell^D\backslash\EE_\star^D$ satisfies the marking criterion~\eqref{eq:doerfler:osc}.

In the first case, minimal cardinality of $\MM_\ell\subseteq\TT_\ell$ in step~(iii) of Algorithm~\ref{algorithm2} implies $\#\MM_\ell\le\#\RR_\ell\simeq\#(\TT_\ell\backslash\TT_\star)$. In the second case, minimal cardinality of $\MM_\ell^D\subseteq\EE_\ell^D$ and the definition of $\MM_\ell\subseteq\TT_\ell$ in step~(iv) of Algorithm~\ref{algorithm2} imply $\#\MM_\ell\le\#\MM_\ell^D\le\#(\EE_\ell^D\backslash\EE_\star^D) \lesssim \#(\TT_\ell\backslash\TT_\star)$. In either case, we thus conclude
\begin{align*}
 \#\MM_\ell \lesssim \#(\TT_\ell\backslash\TT_\star)
 \lesssim\eps^{-1/s} \simeq \eta_\ell^{-1/s} \simeq \Delta_\ell^{-1/(2s)}.
 \quad\text{for all }\ell\ge0.
\end{align*}

We now conclude the proof as e.g.\ in~\cite{stevenson,ckns}: By use of the
closure estimate~\eqref{eq:closure}, we obtain
\begin{align*}
 \#\TT_\ell - \#\TT_0
 \lesssim \sum_{j=0}^{\ell-1}\#\MM_j
 \lesssim \sum_{j=0}^{\ell-1}\Delta_j^{-1/(2s)}.
\end{align*}
Note that the contraction property~\eqref{eq:contraction} of $\Delta_j$ implies $\Delta_\ell \le \kappa^{\ell-j}\,\Delta_j$, whence $\Delta_j^{-1/(2s)}\le\kappa^{(\ell-j)/(2s)}\,\Delta_\ell^{-1/(2s)}$.
According to $0<\kappa<1$ and the geometric series, this gives
\begin{align*}
 \#\TT_\ell - \#\TT_0
 \lesssim \Delta_\ell^{-1/(2s)}\,\sum_{j=0}^{\ell-1}\kappa^{(\ell-j)/(2s)}
 \lesssim \Delta_\ell^{-1/(2s)}
 \simeq \eta_\ell^{-1/s}.
\end{align*}
Altogether, we may therefore conclude that $(u,f,g,\phi)\in\A_s$ implies
$\eta_\ell \lesssim (\#\TT_\ell - \#\TT_0)^{-s}$ for all $\ell\ge0$. The converse implication is obvious by definition of $\A_s$.
\qed

\subsection{Characterization of approximation class (Theorem~\ref{theorem:approximationclass})}
First, note that for a given mesh $\TT_\star \in \T$ the estimator $\eta_\star$ dominates all oscillation terms, i.e.
\begin{align*}
\oscT\star \leq \eta_\star,\quad \oscD\star \leq \eta_\star , \quad \oscN\star \leq \eta_\star.
\end{align*}
We assume $(u,f,g,\phi) \in \A_s$ for some $s>0$. For each $N\in\N$ it exists $\TT_\star \in \T_N$ such that
\begin{align}\label{eq1:Acharacterization}
N^s\oscT\star \leq N^s\eta_\star \leq C:= \sup_{N\in\N} \inf_{\TT_\star\in\T_N} N^s\eta_\star <\infty.
\end{align}
Analogously, we have
\begin{align}\label{eq2:Acharacterization}
N^s\oscN\star \leq C<\infty \quad\text{and} \quad N^s\oscD\star \leq C<\infty.
\end{align}
The reliability result in Proposition~\ref{prop:estimator} yields
\begin{align}\label{eq3:Acharacterization}
 \min_{V_\star\in\SS^p(\TT_\star)} N^s\norm{u-V_\star}{H^1(\Omega)} \leq N^s\norm{u-U_\star}{H^1(\Omega)}\leq \c{rho:reliable}N^s\eta_\star \leq \c{rho:reliable} C <\infty.
\end{align}
Because $N\in\N$ was arbitrary, the estimates~\eqref{eq1:Acharacterization}--\eqref{eq3:Acharacterization} prove~\eqref{eq:Achar1}--\eqref{eq:Achar4}.

Now, we assume that~\eqref{eq:Achar1}--\eqref{eq:Achar4} hold for $(u,f,g,\phi)$. We aim to prove $(u,f,g,\phi)\in \A_s$. By use of the efficiency estimate in
Proposition~\ref{prop:estimator} and the C\'ea-type estimate in Proposition~\ref{prop:cea}, we derive
\begin{align}\label{eq4:Acharacterization}
\begin{split}
 \sup_{N\in\N} \inf_{\TT_\star\in\T_N} N^s\eta_\star \leq \c{rho:efficient}\sup_{N\in\N} \inf_{\TT_\star\in\T_N}
N^s \Big( \c{cea}&\min_{V_\star\in\SS^p(\TT_\star)}\norm{u-V_\star}{H^1(\Omega)}^2\\
& + \oscT{\star}^2 +\oscN{\star}^2 + \oscD{\star}^2\Big).
\end{split}
\end{align}
For $N\in\N$, the assumption~\eqref{eq:Achar1}--\eqref{eq:Achar4} guarantee meshes $\TT_{\star_u}$, $\TT_{\star_\Omega}$, $\TT_{\star_N}$, $\TT_{\star_D} \in \T_{N/4}$ such that
\begin{align*}
(N/4)^s \min_{V_{\star_u}\in\SS^p(\TT_{\star_u})}\norm{u-V_{\star_u}}{H^1(\Omega)} &\leq \sup_{N>0}\inf_{\TT_\star\in\T_N}
 \min_{V_\star\in\SS^p(\TT_\star)} N^s\norm{u-V_\star}{H^1(\Omega)} =:C_u< \infty,\\
 (N/4)^s\,\oscT{\star_\Omega}&\leq\sup_{N>0}\inf_{\TT_\star\in\T_N} N^s\oscT\star =:C_{{\rm osc}_\TT} < \infty,\\
 (N/4)^s\,\oscN{\star_N}&\leq\sup_{N>0}\inf_{\TT_\star\in\T_N} N^s\oscN\star=:C_{{\rm osc}_N} < \infty,\\
 (N/4)^s\,\oscD{\star_D}&\leq\sup_{N>0}\inf_{\TT_\star\in\T_N} N^s\oscD\star =:C_{{\rm osc}_D}< \infty.
\end{align*}
Now, we consider the overlay $\TT_{*} := \TT_{\star_u} \oplus \TT_{\star_\Omega} \oplus \TT_{\star_N} \oplus \TT_{\star_D}$. The overlay estimate~\eqref{eq:overlay} gives $\#\TT_* \le N - 3\#\TT_0$, whence $\#\TT_*-\#\TT_0\leq N$. Due to the fact that $\Pi_E$ and $\Pi_T$ are projections, we get immediately by definition of the oscillation terms and $\SS^p(\TT_*)\supseteq \SS^p(\TT_{\star_u})$
\begin{align*}
 \min_{V_*\in\SS^p(\TT_*)}\norm{u-V_*}{H^1(\Omega)} \leq \min_{V_{\star_u}\in\SS^p(\TT_{\star_u})}\norm{u-V_{\star_u}}{H^1(\Omega)}, \\
 \oscT{*}\leq \oscT{\star_\Omega},\quad
 \oscN{*} \leq \oscN{\star_N}, \quad\text{and}\quad
 \oscD{*} \leq \oscD{\star_D}.
\end{align*}
Together with~\eqref{eq4:Acharacterization}, we prove
\begin{align*}
\inf_{\TT_\star\in\T_N} N^s\eta_\star& 
\leq \c{rho:efficient}
N^s \Big( \c{cea}\min_{V_*\in\SS^p(\TT_*)}\norm{u-V_*}{H^1(\Omega)}^2
+ \oscT{*}^2 +\oscN{*}^2 + \oscD{*}^2\Big)\\
&\leq \c{rho:efficient}
N^s \Big( \c{cea}\min_{V_{\star_u}\in\SS^p(\TT_{\star_u})}\norm{u-V_{\star_u}}{H^1(\Omega)}^2
+ \oscT{\star_T}^2 +\oscN{\star_N}^2 + \oscD{\star_D}^2\Big)\\
&\leq \c{rho:efficient}4^{-s}(\c{cea}C_u + C_{{\rm osc}_\TT} + C_{{\rm osc}_N} + C_{{\rm osc}_D} ) <\infty,
\end{align*}%
where the constants are independent of $N\in\N$. Taking the supremum over $N\in\N$, we conclude $(u,f,g,\phi)\in \A_s$.
\qed 

\section{Numerical Experiment}
\label{section:numerics}%

\begin{figure}
\begin{center}
%
%
%

	\pgfplotsset{width=14cm,height=9cm,compat=1.3} 
	\tikzstyle{every pin}=[fill=white,draw=black,font=\footnotesize,rounded corners=5pt]

\begin{tikzpicture}[trim axis left, trim axis right]
\begin{axis}[
	title = {Error estimator $\varrho_\ell$ for $L^2$-orthogonal projection in 2D},
	xmode=log,ymode=log,
	xtick={1e1,1e2,1e3,1e4,1e5,1e6},
	xlabel={number of elements $N = \#\mathcal{T}_\ell$},
	thick,
	mark size = 3,
	legend style={
		cells={anchor=west},
		rounded corners=7pt, 
	},
	]

	
	\addplot[forget plot, domain=2e3:2e6,color=black!50,mark=none,densely dashed] {x^(-2/7)*1.3e0};
 	\node[coordinate,pin=right:{$\mathcal{O}\big(N^{-2/7}\big)$}] at (axis cs:6e5,3e-2) {};

	\addplot[forget plot, domain=2e3:2e6,color=black!50,mark=none,densely dashed] {x^(-1/2)*3.5e0};
 	\node[coordinate,pin=left:{$\mathcal{O}\big(N^{-1/2}\big)$}] at (axis cs:1e5,1.1e-2) {};



	\addplot
		table {./figures/compare_orthogonal_2D/unif_L2.dat}; 
	\addlegendentry{uniform}

	\addplot
		table {./figures/compare_orthogonal_2D/025_L2.dat}; 
	\addlegendentry{$\theta = 1/4$}
			
	\addplot
		table {./figures/compare_orthogonal_2D/18_L2.dat}; 
	\addlegendentry{$\theta = 1/8$}	

	\addplot
		table {./figures/compare_orthogonal_2D/116_L2.dat}; 
	\addlegendentry{$\theta = 1/16$}

\end{axis}
\end{tikzpicture}

%
%
%
%
%
%
%
%
%
%
%
%
%
%
\vspace*{10mm}
%
%
%
	\pgfplotsset{width=14cm,height=9cm,compat=1.3} 
	\tikzstyle{every pin}=[fill=white,draw=black,font=\footnotesize,rounded corners=5pt]

\begin{tikzpicture}[trim axis left, trim axis right]
\begin{axis}[
	title={Error estimator $\varrho_\ell$ for Scott-Zhang projection in 2D},
	xmode=log,ymode=log,
	xtick={1e1,1e2,1e3,1e4,1e5,1e6},
	xlabel={number of elements $N = \#\mathcal{T}_\ell$},
		thick,
	mark size = 3,
	legend style={
		cells={anchor=west},
		rounded corners=7pt, 
	},
	]
	\addplot[forget plot, domain=2e3:2e6,color=black!50,mark=none,densely dashed] {x^(-2/7)*1.3e0};
 	\node[coordinate,pin=right:{$\mathcal{O}\big(N^{-2/7}\big)$}] at (axis cs:6e5,3e-2) {};

	\addplot[forget plot, domain=2e3:2e6,color=black!50,mark=none,densely dashed] {x^(-1/2)*3.5e0};
 	\node[coordinate,pin=left:{$\mathcal{O}\big(N^{-1/2}\big)$}] at (axis cs:1e5,1.1e-2) {};



	\addplot
		table {./figures/compare_scottzhang_2D/unif_SZ.dat}; 
	\addlegendentry{uniform}

	\addplot
		table {./figures/compare_scottzhang_2D/025_SZ.dat}; 
	\addlegendentry{$\theta = 1/4$}

	\addplot
		table {./figures/compare_scottzhang_2D/18_SZ.dat}; 
	\addlegendentry{$\theta = 1/8$}
		
	\addplot
		table {./figures/compare_scottzhang_2D/116_SZ.dat}; 
	\addlegendentry{$\theta = 1/16$}

\end{axis}
\end{tikzpicture}
%
%
%
%
%
%
%
%
%
%
%
%
%
%
%
\end{center}
\caption{Numerical results for $\varrho_\ell$ for uniform and adaptive mesh-refinement using $\theta \in \{1/4, 1/8, 1/16\}$ and $L^2$-orthogonal projection and Scott-Zhang projection in 2D, respectively, plotted over the number of elements $N = \# \TT_\ell$.}
\label{fig:compare_orthogonal_sz_2D}
\end{figure}

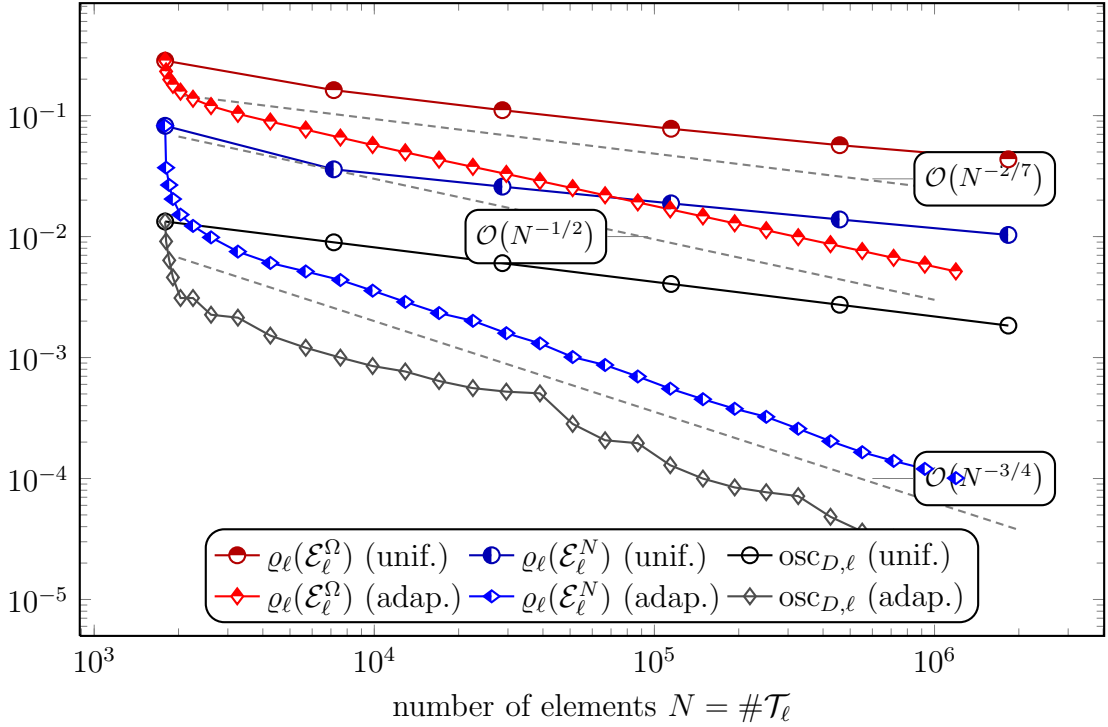
\begin{figure}
\begin{center}
%
%
%
%

	\pgfplotsset{width=15.2cm,height=10cm,compat=1.3} 
	\tikzstyle{every pin}=[fill=white,draw=black,font=\footnotesize,rounded corners=5pt]

\begin{tikzpicture}[trim axis left, trim axis right]
\begin{axis}[
	title=Individual contributions of $\varrho_\ell$ for Scott-Zhang projection in 2D,
	xmode=log,ymode=log,
	xtick={1e1,1e2,1e3,1e4,1e5,1e6},
	xlabel={number of elements $N = \#\mathcal{T}_\ell$},
	ymin = 5e-6,
	legend style={
		cells={anchor=west},
		anchor=north west, 
		legend columns=3, 
		rounded corners=7pt, 
	},
	legend style={at={(0.5,+0.02)},anchor=south}, 
	mark size= 3,
	thick,
	]

	
	\addplot[color=red!70!black,mark=halfcircle*] 
		table {./figures/compare_sep_2D/jumpsUnif.dat}; 
	\addlegendentry{$\varrho_{\ell}(\mathcal{E}_\ell^\Omega)$  (unif.)}

	\addplot[color=blue!70!black,mark=halfcircle*,every mark/.append style = {rotate=90}] 
		table {./figures/compare_sep_2D/neuUnif.dat}; 
	\addlegendentry{$\varrho_{\ell}(\mathcal{E}_\ell^N)$  (unif.)}
	
	\addplot[color=black,mark=o] 
		table {./figures/compare_sep_2D/UDellUnif.dat}; 
	\addlegendentry{$\textrm{osc}_{D,\ell}$ (unif.)}


	\addplot[color=red,mark=halfdiamond*,every mark/.append style = {rotate=180}] 
		table {./figures/compare_sep_2D/jumps_SZ.dat}; 
	\addlegendentry{$\varrho_{\ell}(\mathcal{E}_\ell^\Omega)$  (adap.)}
	
	\addplot[color=blue,mark=halfdiamond*,every mark/.append style = {rotate=-90}] 
		table {./figures/compare_sep_2D/neumann_SZ.dat}; 
	\addlegendentry{$\varrho_{\ell}(\mathcal{E}_\ell^N)$  (adap.)}

	\addplot[color=black!70,mark=diamond] 
		table {./figures/compare_sep_2D/UDell_SZ.dat}; 
	\addlegendentry{$\textrm{osc}_{D,\ell}$ (adap.)}

	\addplot[forget plot, domain=2e3:1e6,color=black!50,mark=none,densely dashed] {x^(-1/2)*30e-1};
 	\node[coordinate,pin=left:{$\mathcal{O}\big(N^{-1/2}\big)$}] at (axis cs:9.5e4,1e-2) {};
	
	\addplot[forget plot, domain=2e3:2e6,color=black!50,mark=none,densely dashed] {x^(-3/4)*2e0};
 	\node[coordinate,pin=right:{$\mathcal{O}\big(N^{-3/4}\big)$}] at (axis cs:6e5,1e-4) {};

	\addplot[forget plot, domain=2e3:2e6,color=black!50,mark=none,densely dashed] {x^(-2/7)*1.3e0};
 	\node[coordinate,pin=right:{$\mathcal{O}\big(N^{-2/7}\big)$}] at (axis cs:6e5,3e-2) {};

	
%

\end{axis}
\end{tikzpicture}
%
%
%
%
%
%
%
%
%
%
%
%
\end{center}
\caption{Numerical results for 
$\varrho_{\ell}(\EE_\ell^\Omega)$, $\varrho_{\ell}(\EE_\ell^N)$ and $\oscD{\ell}$
 for uniform and adaptive mesh-refinement using $\theta = 0.25$ and Scott-Zhang projection in 2D, plotted over the number of elements $N = \# \TT_\ell$.}
\label{fig:compare_sep_2D}
\end{figure}

\noindent In this section, we provide numerical results for mixed boundary value problems in two and three space dimensions for the lowest-order case $p = 1$. In both examples we choose $\vartheta = \theta_1 = \theta_2$ in Algorithm~\ref{algorithm2}. For comparison of the individual contributions $\varrho_\ell$, we further define the jump terms
\begin{align}
\varrho_{\ell}(\EE_\ell^\Omega)^2 := \sum_{E \in \EE_\ell^\Omega} |T|^{1/d}\norm{[\partial_n U_\ell]}{L^2(E)}^2,
\end{align}
the volume terms
\begin{align}
\varrho_{\ell}(\Omega)^2 := \sum_{T \in \TT_\ell} |T|^{2/d}\norm{f}{L^2(T)}^2
\end{align}
and the Neumann terms
\begin{align}
\varrho_{\ell}(\EE_\ell^N)^2 :=\sum_{E \in \EE_\ell^N}|T|^{1/d}\norm{\phi - \partial_n U_\ell}{L^2(E)}^2.
\end{align}
for the respective space dimension $d \in {2, 3}$.

\subsection{2D example on Z-shape}
\noindent
In our first example, we consider the Z-shaped domain $\Omega = (-1, 1)^2 \backslash \rm{conv}\{(0,0), (-1,-1), (0,-1)\}$, see Figure~\ref{fig:meshes_2D}, where also the partition of the boundary $\Gamma = \partial \Omega$ into Dirichlet boundary $\Gamma_D$ and Neumann boundary $\Gamma_N$ as well as the initial mesh is shown. We prescribe the exact solution
\begin{align}
 u(x) = r^{4/7}\cos(4\varphi/7)
\end{align}
of problem~\ref{eq:strongform} in polar coordinates $x = r(\cos \varphi, \sin \varphi)$ and compute the Neumann and Dirichlet data thereof. Note, that $f$ is harmonic so that
\begin{align*}
-\Delta u = f = 0.
\end{align*}
The solution $u$ as well as the Dirichlet data $g = u|_\Gamma$ show a generic singularity at the reentrant corner $r = 0$. 

Figure~\ref{fig:compare_orthogonal_sz_2D} shows a comparison between uniform and adaptive mesh refinement, where the adaptivity parameter $\theta$
varies between $1/4$ and $1/16$ and where the Dirichlet data are discretized by means of the $L^2$-projection and the Scott-Zhang projection, respectively. It is easily seen that both discretizations lead to the optimal convergence rate $\mathcal{O}(N^{-1/2})$
for all parameters $\theta$, whereas uniform refinement leads only to suboptimal convergence behaviour of
approximately $\mathcal{O}(N^{-2/7})$. Note that due to $f \equiv 0$, we have no volume contributions in this example.

In Figure~\ref{fig:compare_sep_2D}, we compare the jump terms, the Neumann terms, as well as the Dirichlet oscillations $\oscD\ell$ for uniform and adaptive refinement, where we have chosen the Scott-Zhang projection to discretize the boundary data.
Even here, we observe better convergence rates with adaptive refinement. Due to the corner singularity of the exact solution at $r = 0$, uniform refinement leads to a suboptimal convergence behaviour, even for the oscillations.
\begin{figure}
	\begin{center}
		\begin{minipage}{.45\textwidth}
			\includegraphics[width=1\textwidth]{./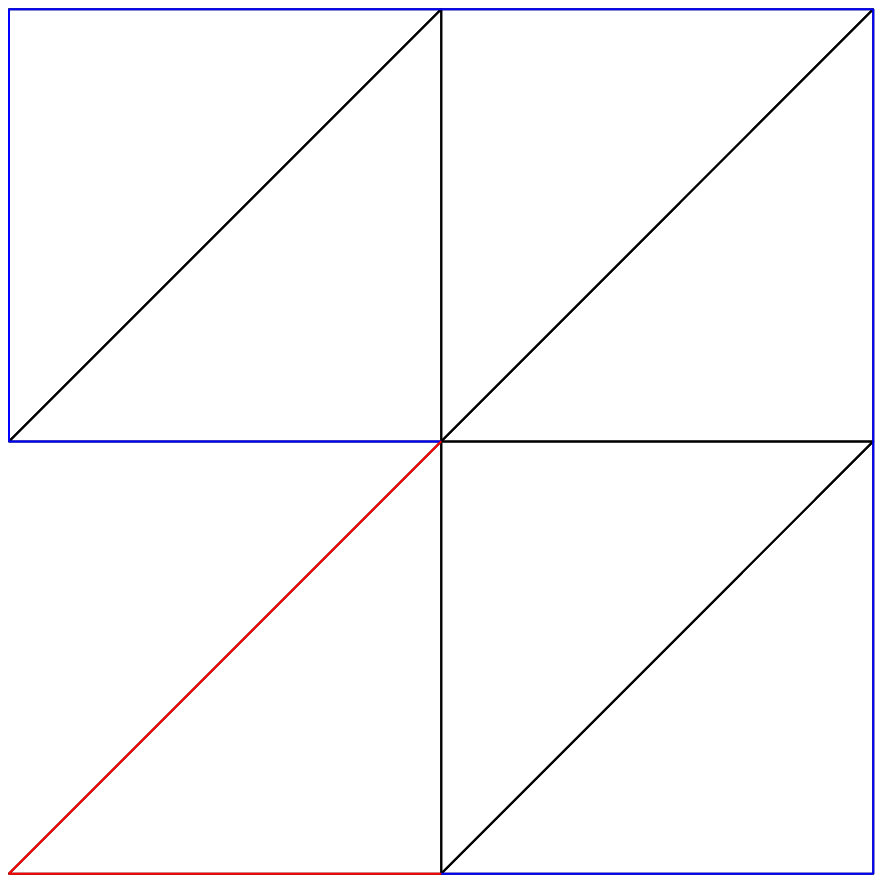}
		\end{minipage}
		\begin{minipage}{.45\textwidth}
			\includegraphics[width=1\textwidth]{./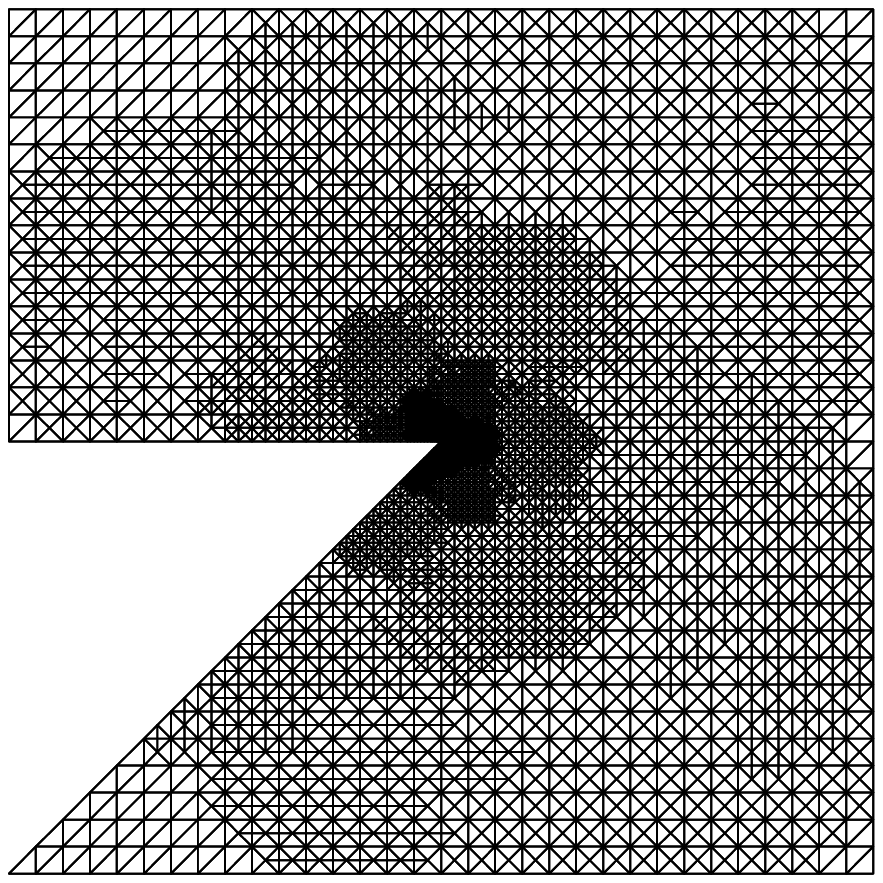}
		\end{minipage}
	\end{center}
	\caption{Z-shaped domain with initial mesh $\TT_0$ and adaptively generated mesh $\TT_{11}$ with $N=9.864$ for $\theta = 0.25$. The Dirichlet boundary $\Gamma_D$ is marked red, whereas the blue parts denote the Neumann boundary $\Gamma\backslash \Gamma_D$.
	}
	\label{fig:meshes_2D}
\end{figure}
Finally, in Figure~\ref{fig:meshes_2D}, the initial mesh $\TT_0$ and the adaptively generated mesh $\TT_{11}$ with $N=9.864$ Elements are visualized. As expected, adaptive refinement is essentially concentrated around the reentrant corner $r = 0$.

\begin{figure}
\begin{center}
%
%
%

	\pgfplotsset{width=14cm,height=9cm,compat=1.3} 
	\tikzstyle{every pin}=[fill=white,draw=black,font=\footnotesize,rounded corners=5pt]

\begin{tikzpicture}[trim axis left, trim axis right]
\begin{axis}[
	title = {Error estimator $\varrho_\ell$ for $L^2$-orthogonal projection in 3D},
	xmode=log,ymode=log,
	xmax = 5e5,
	xtick={1e1,1e2,1e3,1e4,1e5,1e6},
	xlabel={number of elements $N = \#\mathcal{T}_\ell$},
	thick,
	mark size = 3,
	legend style={
		cells={anchor=west},
		rounded corners=7pt, 
	},
	]

	\addplot[forget plot, domain=4e2:3e5,color=black!50,mark=none,densely dashed] {x^(-1/3)*9.3e0};
 	\node[coordinate,pin=right:{$\mathcal{O}\big(N^{-1/3}\big)$}] at (axis cs:1e5,2.0e-1) {};


	\addplot[forget plot, domain=4e2:3e5,color=black!50,mark=none,densely dashed] {x^(-2/9)*5e0};
 	\node[coordinate,pin=right:{$\mathcal{O}\big(N^{-2/9}\big)$}] at (axis cs:2e4,5.5e-1) {};

	\addplot
		table {./figures/compare_orthogonal/orthogonal_1.00.dat}; 
	\addlegendentry{uniform}

%

		
	\addplot
		table {./figures/compare_orthogonal/orthogonal_0.25.dat}; 
	\addlegendentry{$\theta = 1/4$}

%

	\addplot
		table {./figures/compare_orthogonal/orthogonal_0.12.dat}; 
	\addlegendentry{$\theta = 1/8$}

	\addplot
		table {./figures/compare_orthogonal/orthogonal_0.06.dat}; 
	\addlegendentry{$\theta = 1/16$}	

	

\end{axis}
\end{tikzpicture}

%
%
%
%
%
%
%
%
%
%
%
%
%
%
\vspace*{10mm}
%
%
%
	\pgfplotsset{width=14cm,height=9cm,compat=1.3} 
	\tikzstyle{every pin}=[fill=white,draw=black,font=\footnotesize,rounded corners=5pt]

\begin{tikzpicture}[trim axis left, trim axis right]
\begin{axis}[
	title={Error estimator $\varrho_\ell$ for Scott-Zhang projection in 3D},
	xmode=log,ymode=log,
	xtick={1e1,1e2,1e3,1e4,1e5,1e6},
	xlabel={number of elements $N = \#\mathcal{T}_\ell$},
		thick,
	xmax = 5e5,
	mark size = 3,
	legend style={
		cells={anchor=west},
		rounded corners=7pt, 
	},
	]
	\addplot[forget plot, domain=4e2:3e5,color=black!50,mark=none,densely dashed] {x^(-1/3)*9.3e0};
 	\node[coordinate,pin=right:{$\mathcal{O}\big(N^{-1/3}\big)$}] at (axis cs:1e5,2.0e-1) {};

%

	\addplot[forget plot, domain=4e2:3e5,color=black!50,mark=none,densely dashed] {x^(-2/9)*5e0};
 	\node[coordinate,pin=right:{$\mathcal{O}\big(N^{-2/9}\big)$}] at (axis cs:2e4,5.5e-1) {};

	\addplot
		table {./figures/compare_scottzhang/scottzhang_1.00.dat}; 
	\addlegendentry{uniform}

%


	\addplot
		table {./figures/compare_scottzhang/scottzhang_0.25.dat}; 
	\addlegendentry{$\theta = 1/4$}

%

	\addplot
		table {./figures/compare_scottzhang/scottzhang_0.12.dat}; 
	\addlegendentry{$\theta = 1/8$}
		
	\addplot
		table {./figures/compare_scottzhang/scottzhang_0.06.dat}; 
	\addlegendentry{$\theta = 1/16$}



\end{axis}
\end{tikzpicture}
%
%
%
%
%
%
%
%
%
%
%
%
%
%
%
\end{center}
\caption{Numerical results for $\varrho_\ell$ for uniform and adaptive mesh-refinement using $\theta \in \{1/4, 1/8, 1/16\}$ and $L^2$-orthogonal projection and Scott-Zhang projection in 3D, respectively, plotted over the number of elements $N = \# \TT_\ell$.}
\label{fig:compare_orthogonal_sz}
\end{figure}

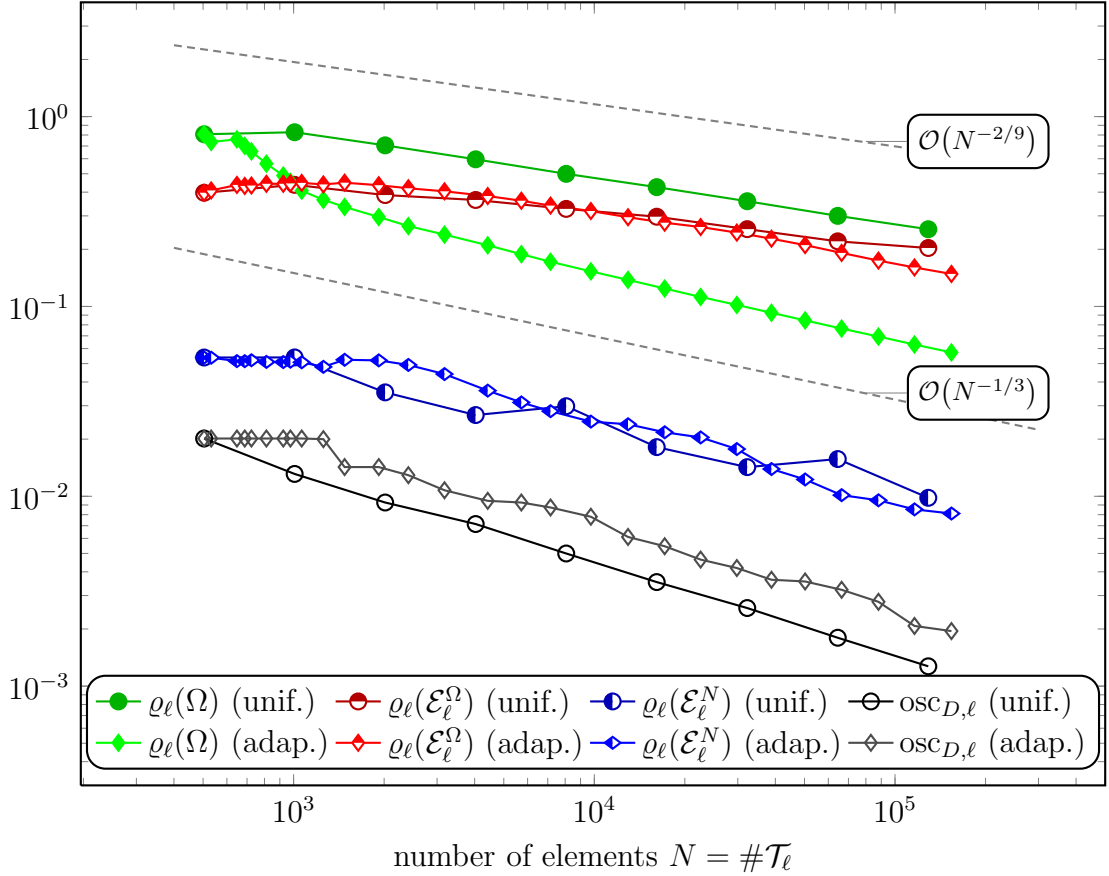
\begin{figure}
\begin{center}
%
%
%
%

	\pgfplotsset{width=15.2cm,height=12cm,compat=1.3} 
	\tikzstyle{every pin}=[fill=white,draw=black,font=\footnotesize,rounded corners=5pt]

\begin{tikzpicture}[trim axis left, trim axis right]
\begin{axis}[
	title=Individual contributions of $\varrho_\ell$ for $L^2$-orthogonal projection in 3D,
	xmode=log,ymode=log,
	xtick={1e1,1e2,1e3,1e4,1e5,1e6},
	xlabel={number of elements $N = \#\mathcal{T}_\ell$},
	ymin = 3e-4,
	ymax = 4e0,
	xmax = 5e5,
	legend style={
		cells={anchor=west},
		anchor=north west, 
		legend columns=4, 
		rounded corners=7pt, 
	},
	legend style={at={(0.5,+0.02)},anchor=south}, 
	mark size= 3,
	thick,
	]

	\addplot[color=green!70!black!100,mark=*] 
		table {./figures/compare_sep/L2_1.00_volume.dat}; 
	\addlegendentry{$\varrho_{\ell}(\Omega)$ (unif.)}
	
	\addplot[color=red!70!black,mark=halfcircle*] 
		table {./figures/compare_sep/L2_1.00_jumps.dat}; 
	\addlegendentry{$\varrho_{\ell}(\mathcal{E}_\ell^\Omega)$  (unif.)}

	\addplot[color=blue!70!black,mark=halfcircle*,every mark/.append style = {rotate=90}] 
		table {./figures/compare_sep/L2_1.00_neumann.dat}; 
	\addlegendentry{$\varrho_{\ell}(\mathcal{E}_\ell^N)$  (unif.)}
	
	\addplot[color=black,mark=o] 
		table {./figures/compare_sep/L2_1.00_dirichlet.dat}; 
	\addlegendentry{$\textrm{osc}_{D,\ell}$ (unif.)}

	\addplot[color=green,mark=diamond*] 
		table {./figures/compare_sep/L2_0.25_volume.dat}; 
	\addlegendentry{$\varrho_{\ell}(\Omega)$ (adap.)}

	\addplot[color=red,mark=halfdiamond*,every mark/.append style = {rotate=180}] 
		table {./figures/compare_sep/L2_0.25_jumps.dat}; 
	\addlegendentry{$\varrho_{\ell}(\mathcal{E}_\ell^\Omega)$  (adap.)}
	
	\addplot[color=blue,mark=halfdiamond*,every mark/.append style = {rotate=-90}] 
		table {./figures/compare_sep/L2_0.25_neumann.dat}; 
	\addlegendentry{$\varrho_{\ell}(\mathcal{E}_\ell^N)$  (adap.)}

	\addplot[color=black!70,mark=diamond] 
		table {./figures/compare_sep/L2_0.25_dirichlet.dat}; 
	\addlegendentry{$\textrm{osc}_{D,\ell}$ (adap.)}

	\addplot[domain=4e2:3e5,color=black!50,mark=none,densely dashed] {x^(-2/9)*0.9e1};
 	\node[coordinate,pin=right:{$\mathcal{O}\big(N^{-2/9}\big)$}] at (axis cs:8e4,7.4e-1) {};
	
	\addplot[domain=4e2:3e5,color=black!50,mark=none,densely dashed] {x^(-1/3)*1.5e0};
 	\node[coordinate,pin=right:{$\mathcal{O}\big(N^{-1/3}\big)$}] at (axis cs:8e4,3.5e-2) {};

\end{axis}
\end{tikzpicture}
%
%
%
%
%
%
%
%
%
%
%
%
\end{center}
\caption{Numerical results for 
$\varrho_{\ell}(\Omega)$, $\varrho_{\ell}(\EE_\ell^\Omega)$, $\varrho_{\ell}(\EE_\ell^N)$ and $\oscD{\ell}$
 for uniform and adaptive mesh-refinement using $\theta = 1/4$ and $L^2$-orthogonal projection in 3D, plotted over the number of elements $N = \# \TT_\ell$.}
\label{fig:compare_sep}
\end{figure}

\subsection{3D example on the Fichera cube}
\noindent
As computational domain serves the Fichera cube $\Omega = (-1,1)^3\backslash[0,1]^3$ which has a concave corner and three reentrant edges. The partition of the boundary $\Gamma = \partial \Omega$ into Dirichlet boundary $\Gamma_D$ and Neumann boundary $\Gamma_N$, as well as the initial surface mesh is shown in Figure~\ref{fig:meshes}. We solve problem~\eqref{eq:strongform} with right-hand side
\begin{align*}
f(x,y,z) := - \frac{5}{16}\,(x^2+y^2+z^2)^{-7/8}.
\end{align*}
The boundary data are prescribed by the trace resp.\ normal derivative of the exact solution
\begin{align*}
u(x,y,z) = (x^2 + y^2 + z^2)^{1/8}
\end{align*}
which has a singular gradient at the reentrant corner at the origin. Similar to the 2D case, we provide comparisons for various adaptivity parameters as well as for different choices for the discretization of the boundary data.

In Figure~\ref{fig:compare_orthogonal_sz}, we compare uniform and adaptive mesh refinement where the Dirichlet data are discretized by means of the $L^2$-orthogonal projection or the Scott-Zhang projection, respectively. The adaptivity parameter is varied between $1/4$ and $1/16$. We observe that either discretization $g_\ell$ of the Dirichlet data $g$ leads to the optimal convergence rate $\mathcal{O}(N^{-1/3})$ for all choices of $\theta$. Due to the generic singularity at the center, uniform refinement leads only to suboptimal convergence rate of $\mathcal{O}(N^{-2/9})$.

In Figure~\ref{fig:compare_sep}, we compare each contribution of the estimator separately for uniform and adaptive refinement with $\theta = 1/4$. For this comparison, we chose the $L^2$-orthogonal projection to discretize $g$. For adaptive refinement, we observe optimal order of convergence even for $\varrho_{\ell}(\EE_\ell^\Omega)$, $\varrho_{\ell}(\Omega)$, $\varrho_{\ell}(\EE_\ell^N)$, and $\oscD{\ell}$. Uniform refinement, on the other hand, leads to suboptimal convergence rate also for the individual contributions.

\begin{figure}
	\begin{center}
	\quad
		\begin{minipage}{.45\textwidth}
%
%
%
%

\begin{tikzpicture}[line join=round, x={(0.61,-0.35)},y={(-1.10cm,-0.17cm)},z={(0cm,1.15cm)},color=black,scale=1.8] 

	\coordinate (W1_1) at (0,1,0);
	\coordinate (W1_2) at (1,1,0);
	\coordinate (W1_3) at (1,0,0);
	\coordinate (W1_4) at (0,0,0);
	\coordinate (W1_5) at ($(W1_4)+(0,0,1)$);
	\coordinate (W1_6) at ($(W1_1)+(0,0,1)$);
	\coordinate (W1_7) at ($(W1_2)+(0,0,1)$);
	\coordinate (W1_8) at ($(W1_3)+(0,0,1)$);

	\coordinate (W2_1) at ($(W1_1)-(0,2,0)$);
	\coordinate (W2_2) at ($(W1_2)-(0,2,0)$);
	\coordinate (W2_3) at (W1_3);
	\coordinate (W2_4) at (W1_4);
	\coordinate (W2_5) at ($(W2_4)+(0,0,1)$);
	\coordinate (W2_6) at ($(W2_1)+(0,0,1)$);
	\coordinate (W2_7) at ($(W2_2)+(0,0,1)$);
	\coordinate (W2_8) at ($(W2_3)+(0,0,1)$);	

	\coordinate (W3_1) at (W1_1);
	\coordinate (W3_2) at ($(W1_2)-(2,0,0)$);
	\coordinate (W3_3) at ($(W1_3)-(2,0,0)$);
	\coordinate (W3_4) at (W1_4);
	\coordinate (W3_5) at ($(W3_4)+(0,0,1)$);
	\coordinate (W3_6) at ($(W3_1)+(0,0,1)$);
	\coordinate (W3_7) at ($(W3_2)+(0,0,1)$);
	\coordinate (W3_8) at ($(W3_3)+(0,0,1)$);	

	\coordinate (W4_1) at (W2_1);
	\coordinate (W4_2) at ($(W2_2)-(2,0,0)$);
	\coordinate (W4_3) at ($(W2_3)-(2,0,0)$);
	\coordinate (W4_4) at (W2_4);
	\coordinate (W4_5) at ($(W4_4)+(0,0,1)$);
	\coordinate (W4_6) at ($(W4_1)+(0,0,1)$);
	\coordinate (W4_7) at ($(W4_2)+(0,0,1)$);
	\coordinate (W4_8) at ($(W4_3)+(0,0,1)$);

	\coordinate (W5_1) at (0,1,2);
	\coordinate (W5_2) at (1,1,2);
	\coordinate (W5_3) at (1,0,2);
	\coordinate (W5_4) at (0,0,2);
	\coordinate (W5_5) at ($(W5_4)-(0,0,1)$);
	\coordinate (W5_6) at ($(W5_1)-(0,0,1)$);
	\coordinate (W5_7) at ($(W5_2)-(0,0,1)$);
	\coordinate (W5_8) at ($(W5_3)-(0,0,1)$);

	\coordinate (W6_1) at ($(W5_1)-(0,2,0)$);
	\coordinate (W6_2) at ($(W5_2)-(0,2,0)$);
	\coordinate (W6_3) at (W5_3);
	\coordinate (W6_4) at (W5_4);
	\coordinate (W6_5) at ($(W6_4)-(0,0,1)$);
	\coordinate (W6_6) at ($(W6_1)-(0,0,1)$);
	\coordinate (W6_7) at ($(W6_2)-(0,0,1)$);
	\coordinate (W6_8) at ($(W6_3)-(0,0,1)$);

	\coordinate (W7_1) at (W5_1);
	\coordinate (W7_2) at ($(W5_2)-(2,0,0)$);
	\coordinate (W7_3) at ($(W5_3)-(2,0,0)$);
	\coordinate (W7_4) at (W5_4);
	\coordinate (W7_5) at ($(W7_4)-(0,0,1)$);
	\coordinate (W7_6) at ($(W7_1)-(0,0,1)$);
	\coordinate (W7_7) at ($(W7_2)-(0,0,1)$);
	\coordinate (W7_8) at ($(W7_3)-(0,0,1)$);

	\coordinate (W8_1) at (W6_1);
	\coordinate (W8_2) at ($(W6_2)-(2,0,0)$);
	\coordinate (W8_3) at ($(W6_3)-(2,0,0)$);
	\coordinate (W8_4) at (W6_4);
	\coordinate (W8_5) at ($(W8_4)-(0,0,1)$);
	\coordinate (W8_6) at ($(W8_1)-(0,0,1)$);
	\coordinate (W8_7) at ($(W8_2)-(0,0,1)$);
	\coordinate (W8_8) at ($(W8_3)-(0,0,1)$);	

	\coordinate (mitte) at (W2_6); 
	\path node at ($(mitte)+(0,-0.8,+1.1)$) {$\Gamma_D$} edge[-latex,pos=0.4,in=10,out=185,thick] (mitte);
	
	\draw[dashed,thick] (W3_2) -- (W4_2) -- (W2_2);
	\draw[dashed,thick] (W4_2) -- (W8_2);
	

\fill[fill=red,opacity=0.65] (W2_2) -- (W4_2)  -- (W8_2) -- (W6_2); 
\fill[fill=blue,opacity=0.6] (W1_2) -- (W2_2)  -- (W6_2) -- (W8_2) -- (W7_2) -- (W3_2); 

\fill[fill=white,opacity=0.01] (W2_2) -- (W1_2)  -- (W1_7) -- (W1_8) -- (W1_8) -- (W6_3) -- (W6_2); 
\fill[fill=white,opacity=0.01] (W7_1) -- (W7_4) -- (W1_5) -- (W1_6);  

\fill[fill=white,opacity=0.3] (W7_2) -- (W8_2)  -- (W6_2) -- (W6_3) -- (W6_4) -- (W7_1) -- (W7_2); 
\fill[fill=white,opacity=0.3] (W1_7) -- (W1_8) -- (W1_5) -- (W1_6); 

\fill[fill=white,opacity=0.2] (W1_2) -- (W3_2)  -- (W7_2) -- (W7_1) -- (W1_6) -- (W1_7) -- (W1_2);  
\fill[fill=white,opacity=0.2] (W1_5) -- (W1_8) -- (W6_3) -- (W6_4);  

\draw (W1_2) -- (W2_2) -- (W6_2) -- (W8_2) -- (W7_2) -- (W3_2) -- cycle;
\draw (W1_2) -- (W1_7) -- (W1_6) -- (W7_1) -- (W7_4) -- (W1_5) -- (W1_8) -- (W6_3) -- (W6_2);
\draw (W1_6) -- (W1_5);
\draw (W1_7) -- (W1_8);
\draw (W6_3) -- (W6_4);

\draw (W7_1) -- (W7_2);

	\draw[loosely dashed] (W1_2) -- (W1_8);
	\draw[loosely dashed] (W1_1) -- (W1_7);
	\draw[loosely dashed] (W1_6) -- (W1_8);

	\draw[loosely dashed] (W1_1) -- (W1_6);
	\draw[loosely dashed] (W1_3) -- (W1_8);

	\draw[loosely dashed] (W2_2) -- (W2_8);
	\draw[loosely dashed] (W2_8) -- (W2_7);
	
	\draw[loosely dashed] (W6_8) -- (W6_2);
	\draw[loosely dashed] (W6_8) -- (W6_4);

	\draw[loosely dashed] (W7_5) -- (W7_1);

	\draw[loosely dashed] (W3_1) -- (W3_7);
	\draw[loosely dashed] (W3_6) -- (W3_7);
	
	\draw[loosely dashed] (W7_7) -- (W7_1) -- (W7_3) -- (W7_4) -- (W8_1);
	
	\draw[loosely dashed] (W7_3) -- (W8_1) -- (W6_3);

		\end{tikzpicture}

%
%
		\end{minipage}
		\quad
		\begin{minipage}{.45\textwidth}
			\includegraphics[width = .99\textwidth]{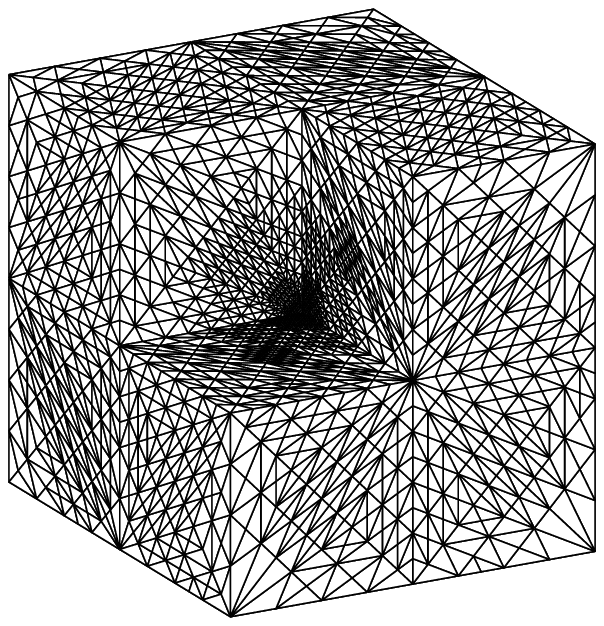}
		\end{minipage}
		
	\end{center}
	\caption{Fichera cube with boundary of the initial mesh $\TT_0$ and $\TT_{30}$ with $N=200.814$ for $\theta = 0.25$. The Dirichlet boundary $\Gamma_D = \{-1\} \times [-1,1]^2$ is marked red, whereas the blue parts denote the Neumann boundary $\Gamma\backslash \Gamma_D$.
	}
	\label{fig:meshes}
\end{figure}

The computational domain, with initial (surface) mesh $\TT_0$ as well as the adaptively generated mesh $\TT_{30}$ with $\#\TT_{30} = 200.814$ elements is finally shown in Figure~\ref{fig:meshes}. As expected, the refinement is basically concentrated around the singularity at the origin.

\bigskip

\noindent \textbf{Acknowledgements} 
The authors M.A., D.P., and M.F.\ are funded by the Austrian Science Fund (FWF) through grant P21732 \emph{``Adaptive Boundary Element Method''}. The author M.P.\ acknowledges support from the Viennese Science and Technology Fund (WWTF) under grant MA09-029 \emph{``Micromagnetic Simulation and Computational Design of Future Devices''}.


\begin{thebibliography}{CKNS}

\bibentry[AFP]{afp}
\textsc{M.~Aurada, S.~Ferraz-Leite, D.~Praetorius}:
\emph{Estimator reduction and convergence of adaptive BEM},
Appl.\ Numer.\ Math., in press (2011).


\bibentry[AO]{ao}
\textsc{M.~Ainsworth, T.~Oden}:
\emph{A posteriori error estimation in finite element analysis},
Wiley-Interscience, New-York, 2000.

\bibentry[BCD]{bcd}
\textsc{S.~Bartels, C.~Carstensen, G.~Dolzmann}:
\emph{Inhomogeneous Dirichlet conditions in a~priori and a~posteriori finite element error analysis},
Numer.\ Math.~\textbf{99} (2004), 1--24.

\bibentry[BDD]{bdd}
\textsc{P. Binev, W.~Dahmen, R.~DeVore}:
\emph{Adaptive finite element methods with convergence rates},
Numer.\ Math.~\textbf{97} (2004), 219--268.

\bibentry[BM]{bm}
\textsc{R.~Becker, S.~Mao}:
\emph{Convergence and quasi-optimal complexity of a simple adaptive finite element method},
M2AN Math.\ Model.\ Numer.\ Anal.~\textbf{43} (2009), 1203--1219.


\bibentry[BV]{bv}
\textsc{I.~Babu\v{s}ka, M.~Vogelius}:
\emph{Feedback and adaptive finite element solution of one-dimensional boundary value problems}, Numer.\ Math.~\textbf{44} (1984), 75--102.

\bibentry[CMS]{cms}
\textsc{C. Carstensen, M. Maischak, E.P. Stephan}:
\emph{A posteriori error estimate and h-adaptive algorithm on surfaces for Symm's integral equation},
Numer. Math. \textbf{90} (2001), no. 2, 197--213.




\bibentry[CKNS]{ckns}
\textsc{M.~Casc\'on, C.~Kreuzer, R.~Nochetto, K.~Siebert}:
\emph{Quasi-optimal convergence rate for an adaptive finite element method},
SIAM J.~Numer.\ Anal.~\textbf{46} (2008), 2524--2550.

\bibentry[D]{doerfler}
\textsc{W.~D\"orfler}:
\emph{A convergent adaptive algorithm for Poisson's equation},
SIAM J.~Numer.\ Anal.~\textbf{33} (1996), 1106--1124.



\bibentry[FKMP]{fkmp}
\textsc{M.~Feischl, M.~Karkulik, M.~Melenk, D.~Praetorius}:
\emph{Quasi-optimal convergence rate for an adaptive boundary element method},
ASC Report \textbf{28/2011}, Institute for Analysis and Scientific Computing,
Vienna University of Technology, 2011.

\bibentry[FPP]{fpp}
\textsc{M.~Feischl, M.~Page, D.~Praetorius}:
\emph{Convergence and quasi-optimality of adaptive FEM with inhomogeneous Dirichlet data},
ASC Report \textbf{34/2010}, Institute for Analysis and Scientific Computing,
Vienna University of Technology, 2010.


\bibitem[HW]{hsiao-wendland}
\textsc{George C.~Hsiao, Wolfgang L.~Wendland}:
\emph{Boundary Integral Equations},
Springer-Verlag, Berlin, 2008.

%
\bibentry[KS]{ks}
\textsc{C.~Kreuzer, K.~Siebert}:
\emph{Decay rates of adaptive finite elements with D\"orfler marking},
Numer.\ Math. \textbf{117} no.\ 4 (2011), 679--716. 

\bibentry[KOP]{kop}
\textsc{M.~Karkulik, G.~Of, D.~Praetorius}:
\emph{Convergence of adaptive 3D BEM for some weakly-singular integral equation},
work in progress (2011).

%
\bibentry[KPP]{kp}
\textsc{M.~Karkulik, D.~Pavlicek, D.~Praetorius}:
\emph{On 2D newest vertex bisection: optimality of mesh-closure and $H^1$-stability of $L^2$-projection},
(2013), published online first, DOI: 10.1007/s00365-013-9192-4.

\bibentry[McL]{mclean}
\textsc{W.~McLean}:
\emph{Strongly elliptic systems and boundary integral equations},
Cambridge University Press, Cambridge 2000.

\bibentry[MNS]{mns}
\textsc{P.~Morin, R.~Nochetto, K.~Siebert}:
\emph{Data oscillation and convergence of adaptive FEM},
SIAM J.~Numer.\ Anal.~\textbf{18} (2000), 466--488.

\bibentry[MSV]{msv}
\textsc{P.~Morin, K.~Siebert, A.~Veeser}:
\emph{A basic convergence result for conforming adaptive finite elements},
Math.~Models Methods Appl.~Sci.~\textbf{18} (2008), 707--737.


\bibentry[SV]{sv}
\textsc{R.~Sacchi, A.~Veeser}:
\emph{Locally efficient and reliable a posteriori error estimators for Dirichlet problems},
Math.\ Models Methods Appl.\ Sci.~\textbf{16} (2006), 319--346.

  \bibitem[SaS]{sauterschwab}
    \textsc{S.\ Sauter, C.\ Schwab},
    \emph{Randelementmethoden}, Springer, Wiesbaden, 2004.

\bibentry[SZ]{sz}
\textsc{L.~Scott, S.~Zhang}:
\emph{Finite element interpolation of nonsmooth functions satisfying boundary conditions},
Math.\ Comp.~\textbf{54} (1990), 483--493.

\bibentry[S07]{stevenson}
\textsc{R.~Stevenson}:
\emph{Optimality of standard adaptive finite element method},
Found.\ Comput.\ Math.~(2007), 245--269.

\bibentry[S08]{stevenson:nvb}
\textsc{R.~Stevenson}:
\emph{The completion of locally refined simplicial partitions created by bisection},
Math.\ Comp.~\textbf{77} (2008), 227--241.

\bibentry[T]{traxler}
\textsc{Traxler:}
\emph{An Algorithm for Adaptive Mesh Refinement in n Dimensions},
Computing \textbf{59} (1997), 115--137.

\bibentry[V]{verfuerth}
\textsc{R.~Verf\"urth}:
\emph{A review of a~posteriori error estimation and adaptive mesh-refinement techniques},
Wiley-Teubner, 1996.

\end{thebibliography}
\def\new#1{#1}
\newcommand{\bibentry}[2][!]{\ifthenelse{\equal{#1}{!}}{\bibitem{#2}}{\bibitem[#1]{#2}}}


\end{document}